\documentclass[letterpaper,11pt,reqno]{amsart}
\usepackage{amsmath}
\usepackage{amsthm}
\usepackage{amssymb,mathrsfs}
\usepackage{latexsym}
\usepackage{amsfonts}
\usepackage{cases}
\usepackage{graphicx}
\usepackage{stmaryrd}
\usepackage{mathtools}
\usepackage{color}
\usepackage{xcolor}
\usepackage[normalem]{ulem} 
\usepackage[margin=1in]{geometry}
\usepackage[utf8]{inputenc}
\usepackage[margin=1in]{geometry}
\usepackage{verbatim}
\usepackage{caption,subcaption}
\usepackage{algorithm}
\usepackage{algpseudocode}
\usepackage[normalem]{ulem}
\makeatletter
\renewcommand{\fnum@algorithm}{\fname@algorithm}
\makeatother
\usepackage[colorlinks,citecolor=blue,urlcolor=blue]{hyperref}
\usepackage{bm}
\usepackage{upgreek}
\usepackage{comment}
\usepackage{stmaryrd}
\usepackage{appendix}
\def \dt {{\Delta t}}

\def\L{{\rm Lip}}

\numberwithin{equation}{section}
\newtheorem{definition}{Definition}[section]
\newtheorem{remark}{Remark}[section]
\newtheorem{theorem}{Theorem}[section]
\newtheorem{proposition}[theorem]{Proposition}
\newtheorem{lemma}[theorem]{Lemma}
\newtheorem{corollary}[theorem]{Corollary}

\newcommand{\be}{\begin{equation}}
	\newcommand{\ee}{\end{equation}}
\newcommand{\bee}{\begin{equation*}}
	\newcommand{\eee}{\end{equation*}}
\newcommand{\bi}{\begin{itemize}}
	\newcommand{\ei}{\end{itemize}}
\DeclareMathOperator{\tr}{tr}
\DeclareMathOperator*{\argmax}{arg\,max}

\usepackage{algorithm}
\usepackage{algpseudocode}
\usepackage{soul}

\def \E{\mathbb{E}}

\def \N{\mathbb{N}}
\def \bbN{\mathbb{N}}

\def \R{\mathbb{R}}

\def \calF{{\mathcal F}}

\def \bbP{{\mathbb P}}
\def \bbE{{\mathbb E}}

\def \calS{\mathcal{S}}

\def \E{\mathbb{E}}

\def \N{\mathbb{N}}
\def \I{\mathbb{I}}

\def \R{\mathbb{R}}

\def \bbS{\mathbb{S}}
\def \bbR{\mathbb{R}}

\def\t{\widetilde}
\def\l{\left}
\def\r{\right}
\def\ll{\l\lVert}
\def\rl{\r\rVert}

\newcommand{\beq}{\begin{equation}}
\newcommand{\eeq}{\end{equation}}
\newcommand{\lb}{\label}
\def \mL{\mathcal{L}}
\def \Sig{\Sigma}
\def \nb{\nabla}
\def \pt{\partial}
\def \lammin{\lambda_{\min}}
\def \Linf{\infty} 
\def \lam{\lambda} 
\def \E{\mathbb{E}}

\def \mru{\mathrm{u}}

\mathtoolsset{showonlyrefs}

\begin{document}
\title[Discretization error]{Discretization error from regularized Reinforcement Learning to continuous-time stochastic control}

\author[H. Pham]{Huyên Pham\textsuperscript{1}} \thanks{\textsuperscript{1}Centre de Mathématiques Appliquées  (CMAP),  Ecole Polytechnique, France
  ({huyen.pham@polytechnique.edu}).}

\author[Y. P. Zhang]{Yuming Paul Zhang\textsuperscript{2}} \thanks{\textsuperscript{2} Department of Mathematics \& Statistics, Auburn University, USA. ({yzhangpaul@auburn.edu}).}

\author[Y. Zhu]{Yuhua Zhu\textsuperscript{3}} \thanks{\textsuperscript{3} Department of Statistics and Data Science, University of California, Los Angeles, USA. ({yuhuazhu@ucla.edu}).}



\date{}	

\begin{abstract}
This paper establishes a rigorous connection between regularized discrete-time reinforcement learning (RL) and continuous-time stochastic optimal control. Specifically, classical RL algorithms are typically solving a regularized discrete-time Bellman equation. We study the discretization error, namely, the gap between the optimal policy induced by the regularized discrete-time Bellman equation and the true optimal feedback control of the underlying continuous-time stochastic control problem. By deriving quantitative convergence rates for this gap, we provide a rigorous foundation for understanding the stability and implementation of exploratory RL policies in stochastic continuous-time environments.
\end{abstract}

\maketitle

\textit{Key words:} Hamilton--Jacobi--Bellman equations, stochastic control, reinforcement learning, convergence rate.

\medskip

\textit{AMS subject classifications:}
  35F21, 60J60, 68W40, 93E20.


\tableofcontents

\section{Introduction}

This paper studies the discounted infinite-horizon stochastic optimal control with dynamics governed by stochastic differential equations (SDEs) 
\cite{FS06}, \cite{YZ99}. This problem is fundamental for modeling systems that evolve continuously under persistent uncertainty and lack a natural terminal time. The infinite-horizon formulation captures long-run performance through stationary or ergodic objectives, yielding time-consistent policies, while SDEs provide a realistic description of continuous-time noise and fluctuations. This framework underpins a wide range of applications, including portfolio optimization in finance\cite{merton1975optimum}, stabilization and navigation in robotics \cite{stengel1994optimal}, and resource management in energy and capacity investment problems \cite{dixit1994investment}.

{The optimal control for stochastic control problems can be formulated over different classes of admissible controls, most notably open-loop and closed-loop (feedback) controls. An open-loop control is defined as a process adapted to the underlying filtration (e.g., the Brownian filtration), and therefore depend on the entire past realization of the driving noise. This formulation provides a mathematically convenient and general admissible class for analysis, but such controls are not directly implementable in practice, as the underlying noise is not observable.} 
In contrast, feedback controls only depend on the current state of the system and are directly implementable from observable system trajectories. {In fully observed Markov stochastic control problems under standard regularity conditions, it is well known that optimizing over open-loop controls yields the same value as restricting to feedback controls. Moreover, it can be computed by solving the associated Hamilton--Jacobi--Bellman (HJB) equation \cite{FS06,YZ99,pham2009continuous,bertsekas2012dynamic}. Consequently, feedback controls play a central role in stochastic control, as they are both implementable and, in classical settings, achieve optimal performance without loss relative to the broader class of adapted controls.}

In many real-world applications, the system dynamics are unknown due to modeling complexity, unobserved factors, or high-dimensional environments, and only discretely sampled trajectory data are available. Reinforcement learning (RL) therefore provides a natural data-driven framework for approximating the optimal control directly from observed data \cite{sutton2018reinforcement,watkins1992q}. However, learning from limited and noisy data can lead to instability and overfitting. Regularized RL, particularly with entropy regularization, alleviates these issues by encouraging exploration and smoothing the optimization landscape, resulting in more stable learning and improved sample efficiency \cite{todorov2009efficient,ziebart2008maximum,haarnoja2018soft, watkins1992q, kearns2002near}. This raises a natural question: how far is the optimal feedback control learned via RL from the true optimal policy of the underlying continuous-time stochastic control problem?

{Existing RL algorithms based on discretely sampled trajectory data} primarily 
focus on solving discrete-time Bellman equations and characterizing algorithmic and statistical errors relative to the corresponding discrete-time optimal policy \cite{jiang2026reinforcement, bertsekas1995neuro, tsitsiklis1999average, jaakkola1993convergence}. {There is a growing literature on continuous-time reinforcement learning. One may end up a different formulation if continuously observed trajectories are available \cite{WZZ20,jia2023q}. However, when only discrete-time observations are available, the continuous-time objective is typically approximated by a discrete-time cumulative sum, effectively reducing the problem to a discrete-time Markov decision process (see Remark \ref{rmk:ralation to cts-time RL}). The resulting discretization error is largely overlooked in the existing literature. Quantifying and understanding this discretization error is the central focus of this paper.}   To be more specific, this paper focuses on the discretization error, namely, the gap between the optimal policy induced by the regularized discrete-time Bellman equation and the true optimal feedback control in the continuous-time stochastic control problem.
We decompose this error into two components. The first captures the discrepancy between the optimal policy of the regularized discrete-time Bellman equation and that of its continuous-time counterpart, arising from time discretization of both the control and value function. The second captures the gap between the regularized and unregularized continuous-time Bellman equations, which reflects the effect of entropy regularization.

We emphasize that our comparison is conducted at the level of feedback policies, rather than value functions. However, directly measuring distances between policies is not meaningful, as policies that differ substantially pointwise may still induce similar performance. Instead, we compare policies through the value functions induced under the true system dynamics, namely the continuous-time SDE. Analyzing this performance-based discrepancy of feedback policies is a central technical challenge of the paper.

Our problem differs a lot from the open-loop problem. For example, the work of \cite{Kryejp,reisinger2019improved} concerns piecewise constant control which is given by adapted random processes taking constant values on small time intervals. The authors obtained the discretization error estimate for the value functions by employing Krylov’s method of shaking the coefficients. The proof relies on the regularity property and the stability analysis of solutions to the regularized PDEs.

In this work, our objective extends beyond deriving error estimates for the optimal value functions; we aim to establish a fine estimate for the feedback control. This presents a unique challenge: while the optimal piecewise constant control depends on the full history of the process, a feedback control is restricted to a deterministic function. The shaking coefficients method and the regularization argument fail to provide direct estimates for feedback controls. 

In our setting, these feedback controls could be significantly more sensitive to small perturbations.
From a technical standpoint, once the discretization 
with time step $h$ is applied at the level of the feedback control, the regularity of the optimal discrete value function $V_h$ may be limited, see Proposition \ref{prop:gradpi}. Specifically, we can only show that $V_h$ and the corresponding optimal feedback control $\pi_h^*$ are Lipschitz continuous with a constant of $O( h^{-1/2})$. Consequently, $\widetilde{r}$ (defined in \eqref{eq:expS}) lacks sufficient regularity, which implies the increment $\widetilde{r}(X_{t+h}) - \widetilde{r}(X_t)$ can possibly be as large as $O(1)$. 


To address these difficulties, we adopt a relaxed control formulation. Instead of tracking each trajectory of $X_t$, we estimate the differences of the density functions of $X_t$ and the discrete trajectories via the Fokker-Planck equation. Using the uniform $L^\infty$-bounds of the density of the stochastic control, the relaxed control formulation reduces the problem to estimating the density functions of trajectories for each fixed control value $u$. Such a procedure leads to a surprisingly outcome that we obtain a nearly linear error in $h$ when applying feedback control discretely with a time step of size $h$. This fast convergence, however, comes at the cost of a dependence on the exploration approximation parameter $\lambda$.

After establishing error estimates for the discrete application of feedback controls, we are able to compare policies through different value functions. Furthermore, these estimates yield a straightforward corollary for comparing the corresponding optimal value functions.

\medskip

\noindent{\bf Contributions.}
The main contribution of the paper are twofolds. First, we characterize the discretization error.
Given a fixed exploration parameter $\lambda>0$, the optimal randomized feedback policy and the optimal value from the regularized RL approximation (see \eqref{eq:regVdis}) is $\pi_h^{*,\lambda}$ and $V_h^\lambda$. We plug this RL policy $\pi_h^{*,\lambda}$ into the continuous time RL problem and compare it with the continuous optimal value function. We also take the optimal randomized policy $\pi^{*,\lambda}$ from the continuous time problem into the discrete time RL problem and compare it with the discrete optimal value function $V_h^\lambda$. Let us use  $V^\lambda[\pi]$ (resp. $V_h^\lambda[\pi]$) to denote the value function of the continuous time (resp. discrete time) problem associated to a randomized policy $\pi$. $V^\lambda[\pi]$ (resp. $V_h^\lambda[\pi]$)  measures the performance of policy  $\pi$ under the continuous-time dynamics (resp. discrete time). 
We obtain the following estimates (see Theorem \ref{T.3.4} and Corollary \ref{C.6.2} and Theorem \ref{T.6.3} for details).  
To the best of our knowledge, this is the first result providing quantitative estimates for the discretization error of feedback controls.

\begin{theorem}
Under certain conditions,  there exists $C=C(\lambda_0)$ such that for any $h\in (0,1)$ and $\lambda\in(0,\lambda_0)$,  
\begin{align} 
\big| V^\lambda[\pi_h^{*,\lambda}] - V^\lambda \big|_{_\infty} 
\; + \; \big| V_h^\lambda[\pi^{*,\lambda}] - V_h^\lambda \big|_{_\infty} \;+\;\big| V^\lambda - V_h^\lambda \big|_{_\infty}
& \leq \;   C\lambda^{-N}h|\ln h|, 
\end{align}
\[
\big|v(\cdot)-V[{\pi}_h^{*,\lambda} ](\cdot)\big|_{_\infty}\leq C\lambda |\ln \lambda|+C\lambda^{-N} h|\ln h|.
\]
\end{theorem}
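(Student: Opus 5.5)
The plan is to prove the first estimate by comparing everything to the continuous-time regularized problem, using a one-step consistency estimate. The second estimate then follows by adding the regularization error.

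\textbf{Step 1: one-step error for a frozen policy.} Fix a randomized feedback policy $\pi$ with density bounded in $L^\infty$ uniformly; the entropy regularization gives bounds of order $\lambda^{-N}$ for the optimal policies. Write $V^\lambda[\pi]$ through the dynamic programming relation on a step of length $h$. The continuous dynamics use $\pi(\cdot|X_s)$ for $s\in[t,t+h]$, while the discrete scheme freezes the state at $X_t$. The difference on one step comes from $\mathbb{E}[f(X_s,u)-f(X_t,u)]$, integrated against $\pi(du|X_t)$ versus $\pi(du|X_s)$. Here $V_h$ and $\pi_h$ are only $O(h^{-1/2})$-Lipschitz (Proposition \ref{prop:gradpi}), so a pathwise estimate gives only $O(h^{1/2})$ per step. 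I would instead use the relaxed control formulation. For each fixed control value $u$, compare the law of $X_s$ under the frozen control $u$ with the law under the randomized feedback. This is done through the Fokker--Planck equation, testing against bounded (not Lipschitz) functions such as $\widetilde r$ of \eqref{eq:expS}. Using heat-kernel-type smoothing of the densities, the total-variation discrepancy accumulated over $[t,t+h]$ is $O(h^{2}|\ln h|\lambda^{-N})$ per step. The $|\ln h|$ comes from integrating the singularity $s^{-1}$ of the density gradient near the initial time.

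\textbf{Step 2: summing over steps.} Sum the one-step errors with discount $e^{-\beta kh}$. Roughly $h^{-1}$ effective steps contribute, so
\[
\big|V^\lambda[\pi]-V_h^\lambda[\pi]\big|_\infty\le C\lambda^{-N}h|\ln h|,
\]
uniformly over policies with the stated density bounds. Applying this to $\pi=\pi^{*,\lambda}$ and to $\pi=\pi_h^{*,\lambda}$ gives the three bounds by a standard sandwich argument:
\[
V^\lambda\ge V^\lambda[\pi_h^{*,\lambda}]\ge V_h^\lambda-C\lambda^{-N}h|\ln h|,\qquad V_h^\lambda\ge V_h^\lambda[\pi^{*,\lambda}]\ge V^\lambda-C\lambda^{-N}h|\ln h|.
\]
This yields $|V^\lambda-V_h^\lambda|_\infty$ and both policy-gap terms, up to sign conventions for maximization or minimization.

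\textbf{Step 3: regularization error.} First show $|v-V^\lambda|_\infty\le C\lambda|\ln\lambda|$. The upper direction uses the entropy maximum: the Gibbs-measure optimizer of the Hamiltonian loses at most $\lambda\ln(C/\lambda)$ relative to the pointwise max over a compact action set, provided the Hamiltonian is Lipschitz in $u$. The comparison principle for the two HJB equations then transfers this pointwise bound to the value functions. Next write
\[
v-V[\pi_h^{*,\lambda}]=(v-V^\lambda)+(V^\lambda-V^\lambda[\pi_h^{*,\lambda}])+(V^\lambda[\pi_h^{*,\lambda}]-V[\pi_h^{*,\lambda}]).
\]
The last term is the entropy contribution of the fixed policy, which is bounded by $C\lambda|\ln\lambda|$ given the density bounds.

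\textbf{Main obstacle.} The crux is Step 1: obtaining the nearly linear one-step rate despite the limited regularity of $\pi_h^{*,\lambda}$. I would first try a Duhamel representation of the density difference. This puts derivatives on the Gaussian kernel rather than on the policy, paying $\lambda^{-N}$ through the $L^\infty$ bound of the policy density.
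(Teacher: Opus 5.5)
Your architecture matches the paper's: a fixed-policy bound $\|V^\lambda[\pi]-V_h^\lambda[\pi]\|_\infty\le C(L_1L_2+\lambda L_1\ln L_1)h|\ln h|$ (Theorem~\ref{P.3.3}), the sandwich of Theorem~\ref{T.3.4}, the bound $\|\pi\|_\infty\le C\lambda^{-N}$ (Lemma~\ref{L.6.2}), and a triangle inequality with the regularization error. The idea of bounded test functions against Fokker--Planck densities is also the right one. \textbf{The gap is in Step 1.} The per-step bound $O(h^2|\ln h|\lambda^{-N})$ you claim, uniform in the step and the starting state, is false, and the one-step dynamic programming setup cannot give it. If a step starts from a given state $x$, then for the relevant test functions $\tilde r(\cdot,\pi)$, $\lambda\int_U\pi\ln\pi\,du$ and $\tilde b(\cdot,\pi)\cdot\nabla V^\lambda[\pi]$, the quantity $\E f(Y_s)-f(x)$ is in general of order $\|f\|_\infty$ for $s\le h$. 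These functions have no regularity uniform in $h$: for $\pi_h^{*,\lambda}$ one only knows $\nabla\ln\pi=O(\lambda^{-1}h^{-1/2})$. So the local residual is $O(h)$, and local consistency plus contraction yields only $O(h)/(1-\gamma)=O(1)$.

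The smoothing you need comes from the diffusion running since time $0$, so the estimate has to be global along one trajectory. The paper applies It\^o's formula to $e^{-\beta t}V^\lambda[\pi](Y^\pi_t)$ along the whole sampled path from $x$, uses the linear PDE for $V^\lambda[\pi]$, and telescopes. The residual on $[ih,(i+1)h]$ is then bounded by $\|f\|_\infty\|\rho(t,\cdot)-\rho(ih,\cdot)\|_{L^1}$, where $\rho$ is the density started from $\delta_x$ at time $0$. The key estimate is $\|\nabla^2\rho(s)\|_{L^1}\lesssim s^{-1}$; this comes from second derivatives (the diffusion term), whereas the gradient only gives an integrable $s^{-1/2}$. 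Combined with the reproduction property for $s\ge 2$, it gives residuals of order:
\begin{itemize}
\item $O(h)$ for $i=0$;
\item $O(h/i)$ for $1\le i\le 2/h$;
\item $O(h^2e^{-\beta ih})$ afterwards.
\end{itemize}
The logarithm is $h\sum_{i\le 2/h}i^{-1}$. It comes from summing over the early steps, not from inside any single step, and the early steps are where the error concentrates.

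\textbf{Two further inputs are asserted rather than proved.} First, $\|\pi_h^{*,\lambda}\|_\infty\le C\lambda^{-N}$ does not follow from entropy regularization, which alone gives only exponential bounds (Lemma~\ref{L.2.4}). With only $\|V_h\|_\infty\le C$, the $u$-Lipschitz constant of the Gibbs exponent is $O(\lambda^{-1}h^{-1/2})$, which would give $(\lambda\sqrt h)^{-N}$ and destroy the rate. The paper needs $\|\nabla_uP_h^uV_h\|_\infty=O(h)$. This uses the uniform-in-$h$ bound $\|\nabla V_h\|_\infty\le e^h\|\nabla r\|_\infty$ (Lemma~\ref{lemma:bd of v vx}, which requires $\beta\ge 1+A_0$) together with Lemma~\ref{lemma:bd of cond-exp}, and then the cone condition (H3) via Lemma~\ref{L.4.1}. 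A volume condition of this kind on $U$, not mere compactness, is also what your Step 3 Gibbs estimate needs. Second, the drift residual is tested against $b\cdot\nabla V^\lambda[\pi]$. So you need $\|\nabla V^\lambda[\pi]\|_\infty\le C$ uniformly in $h$ and $\lambda$ (Lemma~\ref{L.2.8}), and $V^\lambda[\pi]\in C^2$ in order to apply It\^o's formula.
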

Here $N$ denotes the dimension of the control space. 
The estimates say that the approximation of randomized policy contributes an error of $\lambda|\ln\lambda|$ (see \cite{TZZ}), while the discrete feedback control approximation contributes an error  of $\lambda^{-N} h|\ln h|$. 
When this upper bound is sharp, it provides an optimal schedule for choosing $\lambda$ in terms of $h$. That is, when $\lambda:=h^{1/(N+1)}$, the upper bound achieves its minimum, 
$
C\lambda |\ln \lambda|+C\lambda^{-N} h|\ln h|= \t{O} (h^{{1}/(N+1)})$.

The second contribution of this paper arises as a byproduct of the main theorem: we improve regularity bounds for entropy-regularized RL under SDE-induced transition kernels. It is well known in the RL literature that  $\|V^\lambda_h\|_\infty = O((1-\gamma)^{-1})$ \cite{puterman2014markov, bertsekas2012dynamic}, which diverges as the discounted factor $\gamma\to1$. Even under Lipschitz transition dynamics, the same unfavorable dependence persists for the gradient bound \(\|\nabla V_h^\lambda\|_\infty\) \cite{asadi2018lipschitz}.  When approximating a continuous-time control problem, one has $\gamma = e^{-\beta h}$, which leads to $\|V_h^\lambda\|_\infty, \|\nabla V_h^\lambda\|_\infty = O(h^{-1})$, and hence diverge as $h \to 0$. In contrast, we show that when the transition kernel is induced by an SDE, both the value function and its gradient admit uniform-in-$h$ bounds (See Lemma \ref{lemma:bd of v vx}):
$\|V_h^\lambda\|_\infty, \;\|\nabla V_h^\lambda\|_\infty \le C$. 
We further establish analogous uniform-in-$h$ bounds for the optimal policy $\pi^{*,\lam}_h$ from regularized RL (see Lemmas~\ref{L.2.4} and \ref{L.6.2}). On the other hand, we find that $\ll \nb \pi^{*,\lam}_h \rl_\infty$ scales as $O(h^{-1/2})$ (see Proposition~\ref{prop:gradpi}), which suggests that the feedback controls can be sensitive to perturbations. To the best of our knowledge, such regularity results have not been established in the existing RL literature.

\medskip

\noindent {\bf Related works.}
The development of discretization methods for continuous-time stochastic optimal control has evolved from classical numerical analysis toward modern reinforcement learning (RL) frameworks. In the classical setting, a primary focus has been the approximation of value functions for controlled diffusion processes using piecewise constant policies. 
Krylov \cite{Kryejp} and Reisinger et al. \cite{reisinger2019improved} established that such approximations yield an error that is at most of order $1/4$. This foundational work has been extended to more complex settings, such as the discretization of both state and control action spaces. For instance, the paper 
\cite{bayraktar2023approximate} analyzed approximate Q-learning for controlled diffusions, providing upper bounds for the approximation error of the optimal value function when the continuous problem is subject to piecewise constant control processes. These approaches often rely on the framework of viscosity solutions and the Hamilton-Jacobi-Bellman (HJB) equation, as detailed in the comprehensive works of \cite{FS06}, \cite{YZ99}, \cite{pham2009continuous}. 

Another established line of work approximates continuous diffusions by discrete-time Markov chains on finite state spaces.  The seminal work of Kushner and Dupuis \cite{kusdup01} provides a systematic methodology for this via weak convergence, ensuring that the discrete-model value functions converge to the continuous-time HJB solution. Alternatively, quantization algorithms in \cite{pagphapri04} offer a way to discretize the state space into optimal grids, which is particularly effective for multi-dimensional stochastic control problems. Similarly, recent research on the stabilization of hybrid systems (e.g., \cite{xu2024razumikhin}) explores how discrete-time observations can ensure almost sure exponential stability.
There is also a growing body of work that quantifies discretization error in continuous state spaces, though primarily in the policy evaluation setting (i.e., for a fixed policy). \cite{mou2024bellman} analyzes the discrepancy between the discrete-time and continuous-time Bellman equations at the level of the value function and propose higher-order Bellman equations to reduce this error. \cite{zhu2024phibe} introduces an alternative formulation tailored to the continuous-time structure and shows that, under suitable conditions, it achieves smaller discretization error for policy evaluation.


In recent years, the literature has shifted toward exploratory formulations of stochastic control to bridge the gap with reinforcement learning. Wang, Zariphopoulou, and Zhou \cite{WZZ20} introduced a stochastic control approach to RL that utilizes exploratory dynamics to capture learning under entropy regularization. This has led to further investigations into how sampling stochastic policies at discrete time points affects convergence. Szpruch et al. \cite{szpruch2024optimal} and Giegrich et al. \cite{giegrich2024convergence} have explored these limits, particularly in linear-quadratic (LQ) settings with Gaussian policies, showing how sampling algorithms converge toward continuous learning limits. Bender and Thuan \cite{bender2024grid} have analyzed the ``grid-sampling limit'' of SDEs, providing a rigorous look at how the dynamics behave when actions are sampled only on a discrete grid. Additionally, Jia, Ouyang, and Zhang \cite{JOZ} recently proved that as the sampling mesh size tends to zero, the controlled state process converges weakly to the dynamics with coefficients aggregated according to the stochastic policy. When the diffusion coefficients are uncontrolled, they obtained convergence rate between the dynamics. 
However, these studies primarily address time-dependent (open-loop) controls or the global behavior of the state process under sampling.

Our work departs from these established directions by focusing on the quantitative discretization error for feedback controls. The existing literature along this dimension remains limited. \cite{guo2023reinforcement} studies the convergence of learned feedback controls for linear--convex models with jumps with respect to the number of episodes, rather than discretization error in time.  
Later, \cite{zhu2024phibe} establishes an $O(h)$ discretization error for the approximated feedback policy under an alternative, unregularized framework distinct from the standard regularized MDP formulation we study in this paper. We provide a more detailed comparison with \cite{zhu2025optimal} in Remark~\ref{rmk: compare phibe}.

\medskip

\noindent {\bf Outline.}
Section 2 defines the mathematical foundations, including classical optimal control, relaxed controls, and discretely sampled processes. Section 3 introduces the optimal feedback relaxed controls within the framework of Markov Decision Processes (MDPs), while Section 4 provides the core technical contribution through detailed error estimates between the discrete and continuous models. Section 5 explores further regularity properties of the MDP to support the stability of the solution. In Section 6, we prove the convergence rate to the classical optimal control problem. 

\section{Preliminaries}

\subsection{The classical optimal control problem}
Let $(\Omega, \mathcal{F}, \mathbb{P}, \{\mathcal{F}_t\}_{t \ge 0})$ be a filtered probability space on which we have a $d$-dimensional $\mathcal{F}_t$-adapted Brownian motion $(B_t, \, t \ge 0)$.
Let $U\subseteq \bbR^N$ be a generic action/control space, and $\nu = (\nu_t, \, t \ge 0)$ be a  control which is an $\mathcal{F}_t$-adapted process taking values in $U$. We denote by $\mathcal{U}$ the set of such admissible 
open-loop controls $\nu = (\nu_t, \, t \ge 0)$.

The classical stochastic control problem is to control the state variable $X_t$ valued in  $\mathbb{R}^d$, whose dynamics is governed by the following stochastic differential equation (SDE):
\begin{equation}
\label{eq:classicalS}
dX^{\nu}_t = b(X^{\nu}_t, \nu_t) dt + \sigma(X^{\nu}_t) dB_t,
\end{equation}
where $b: \mathbb{R}^d \times U \to \mathbb{R}^d$ denotes the drift term, and $\sigma: \mathbb{R}^d  \to \mathbb{R}^{d \times d}$ is the covariance matrix of the state variable. Throughout the manuscript, we will only consider the case when $\sigma$ is uncontrolled, and we denote $\Sigma$ $=$ $\sigma\sigma^T$. 
The superscript `$\nu$' in $X^{\nu}_t$ emphasizes the dependence of the state variable on the control $\nu=(\nu_t)_t$.
A feedback control is a control process $\nu$ in the form $\nu_t$ $=$ $\mru(X_t^\nu)$ for some measurable function $\mru$ $:$ $\R^d$ $\rightarrow$ $U$, called feedback deterministic policy. 


The goal of the control problem is to maximize the total discounted reward over infinite horizon, leading to the (optimal) value function:
\begin{equation}
\label{eq:classicalV}
v(x) = \sup_{\nu \in \mathcal{U}} \mathbb{E}\left[ \int_0^{\infty} e^{-\beta t}r(X^{\nu}_t, \nu_t) dt \bigg| X^{\nu}_0 = x\right], \qquad x \in \R^d, 
\end{equation}
where $r: \mathbb{R}^d \times U \to \mathbb{R}$ is a reward function, and  $\beta > 0$ is the discount factor.  
By a standard dynamic programming argument, the value function $v$ in \eqref{eq:classicalV} solves the HJB equation:
\begin{equation}
\label{eq:HJBclassical}
-\beta v(x) + \sup_{u \in U} \left[r(x,u) + b(x,u) \cdot \nabla v(x) + \frac{1}{2}\tr(\Sigma(x) \nabla^2v(x))\right] = 0.
\end{equation}
Furthermore, the optimal control is in feedback form, i.e., it is represented as a deterministic mapping from the current state to the action/control space:
{$\nu^{*}_t = \mru^*(X^{*}_t)$} where 
the optimal feedback policy $\mru^*:\bbR^d\to U$ is given by the following formula (under certain conditions so that the following is well-defined):  
\[
\mru^*(x) \; \in \;   \argmax_{u \in U} \left[r(x,u) + b(x,u) \cdot \nabla v(x) + \frac{1}{2}\tr(\Sigma(x) \nabla^2v(x))\right].
\] 
We refer readers to \cite{FS06, YZ99, P09} for detailed accounts of the classical stochastic control theory.

\subsection{Relaxed controls}

The relaxed control problem is concerned with a probability distribution of controls $\varpi = (\varpi_t(\cdot), \, t \ge 0)$ over the control space $ U$ from which each trial is sampled \cite{WZZ20}. 
For each $t$, $\varpi_t\in \mathcal{P}( U)$,  with $\mathcal{P}( U)$ being the set of absolutely continuous probability density functions on $ U$.
The exploratory state dynamics is
\begin{equation}
\label{eq:expS}
dX^\varpi_t = \widetilde{b}(X^\varpi_t, \varpi_t)dt +  {\sigma}(X^\varpi_t)dB_t,
\end{equation}
where the coefficient $\widetilde{b}(\cdot, \cdot)$ is defined by
\begin{equation}
\label{eq:tildebsig}
\widetilde{b}(x, \uppi):=\int_{ U} b(x,u) \uppi(u)du, \quad \text{for $(x,\uppi)\in \mathbb{R}^d\times \mathcal{P}( U)$.}
\end{equation}
The distributional control $\varpi = (\varpi_t, \, t \ge 0)$ is also known as the relaxed control,
and a classical control $\nu = (\nu_t, \, t \ge 0)$ is a degenerated relaxed control when $\varpi_t$ is taken as the Dirac mass at $\nu_t$.

For the optimization problem, to avoid the degenerate case and to  encourage exploration, Shannon's entropy is added to the objective function:
\begin{equation}
\label{eq:regV}
V(x) = \sup_{\varpi \text{ is admissible}} \mathbb{E}\bigg[ \int_0^{\infty} e^{-\beta t} \bigg( \t r(X^\varpi_t, \varpi_t)  - \lambda \int_{ U} \varpi_t(u) \ln \varpi_t(u) du  \bigg) dt \bigg| X^\varpi_0 = x \bigg],
\end{equation}
where, for any $\uppi\in\mathcal{P}( U)$,
\begin{equation}
\label{eq:tilder}
\widetilde{r}(x, \uppi):=\int_{ U} r(x,u) \uppi(u)du,
\end{equation}
and $\lambda > 0$ is a fixed weight parameter controlling the level of exploration, and the definition of $\varpi=(\varpi_t)$ being admissible is given by \cite[Definition 1]{TZZ}. 

Following \cite{WZZ20}, by dynamic programming, the HJB equation associated to  \eqref{eq:expS}--\eqref{eq:regV} is
\beq\label{eq:HJBreg}
\begin{aligned}
&-\beta {V} (x) + \sup_{\uppi \in \mathcal{P}( U)} \int_{ U} \left(r(x,u) + b(x,u) \cdot \nabla {V} (x) - \lambda \ln \uppi(u)\right) \uppi(u)du + \frac{1}{2}\tr(\Sigma(x) \nabla^2{V} (x)) = 0.
\end{aligned}
\eeq
Then, 
 the optimal distributional feedback policy  is obtained by solving the maximization problem in  \eqref{eq:HJBreg} with the constraints $\int_{ U} \uppi(u) du = 1$ and $\uppi(u) \ge 0$ a.e. on $ U$. Indeed, through the verification theorem argument and the fact that $V$ is $C^{2,\alpha}$ under certain conditions, the optimal control can be written as
 \beq\lb{2.1}
 \varpi^*_t(u)={\pi}^*(X^*_t,u),\qquad t \ge 0
 \eeq
where
\begin{equation}
\label{eq:regfeedbackC}
{\pi}^{*}(x,u) := \frac{\exp\left(\frac{1}{\lambda} \left[r(x,u) + b(x,u) \cdot \nabla {V} (x) \right] \right)}{\int_{ U} \exp\left(\frac{1}{\lambda} \left[r(x,u) + b(x,u) \cdot \nabla {V} (x) \right] \right) du}.
\end{equation}
Plugging this into \eqref{eq:HJBreg}  yields that $V$ satisfies the following nonlinear parabolic PDE:
\begin{equation}
\label{main}
\begin{array}{lcl}
-\beta V(x) + \lambda \ln \int_{ U} \exp\left[\frac{1}{\lambda} \big(r(x,u) + b(x,u) \cdot \nabla_x V(x) \big) \right] du+ \frac{1}{2}\tr(\Sigma(x)  \nabla_x^2V(x)) = 0. 
\end{array}
\end{equation}

As for general feedback randomized policy  ${\pi}:\bbR^d\to \mathcal{P}( U)$, we let $X^{{\pi}}_t$ satisfy \eqref{eq:expS} with $\varpi_t(u):={\pi}(X^\pi_t,u)$. The solution exists by the classical theory  as long as $\t b(x,\pi(x,\cdot))$ and $ \sigma(x)$ are Lipschitz continuous in $x$. For simplicity, let us take $\pi(x,u)$ to be uniformly Lipschitz continuous in $x$ and assume that $\int_ U 1du$ is finite. Then the SDE is well-posedness. Next, define
\beq\lb{2.2}
V[{\pi}](x) = \mathbb{E}\bigg[ \int_0^{\infty} e^{-\beta t} \bigg(  \t r(X^{{\pi}}_t, {\pi}(X_t^{{\pi}},\cdot) ) - \lambda \int_{ U} {\pi}(X_t^{{\pi}},u) \ln {\pi}(X_t^{{\pi}},u) du  \bigg) dt \bigg| X^{{\pi}}_0 = x \bigg].
\eeq
where $\t r$ is defined in \eqref{eq:tilder}.

\subsection{Discretely Sampled Processes}
The detailed framework of the discrete sampling is given in \cite{szpruch2024optimal,JOZ}.  For readers' convenience, we  briefly discuss the settings below.

We need to define a sequence of the state process with random actions sampled according to the feedback control ${\pi}$. For each $i\in\bbN_0$, let $(\Omega^i, \mathcal{F}^i, \mathbb{P}^i)$ be a probability space, $\xi^i$ a random variable, and $\varphi$ a function such that
\[
\Omega^{i} \ni \omega \mapsto \varphi(x,\xi^i(\omega))=u \in  U
\]
has distribution ${\pi}( x,u)$ under a measure $\mathbb{P}^{i}$. Moreover, they are mutually independent and are independent from $(\Omega,\calF,\bbP)$.
The discrete state dynamic is given by the following. For all $i\in\N_0$, we define iteratively for $t\in[ih,(i+1)h]$,
\begin{equation}\lb{2.3}
Y^{\pi}_t = Y_{ih}^{\pi} + \int_{ih}^t b(Y_s^{\pi},\nu_{ih})\,ds + \int_{ih}^t \sigma(Y_s^{\pi})\,dB_s,
\qquad \nu_{ih} = \varphi(Y_{ih}^{\pi},\xi^i) \sim \pi(Y_{ih}^\pi,\cdot)
\end{equation}
We will always use the letter `$Y$' to denote discretely sampled dynamics. The probability density function of $Y_h^\pi$ given $Y_0^\pi$ $=$ $x$ is given by $\int_{U}\rho_h(y|x,u)\pi(x,u)du$ where $\rho_t(y|x,u)=\rho(t,y)$ satisfies the Fokker-Planck equation: for $t\in [0,h]$,
\begin{equation}\label{def of rho_h}
    \partial_t \rho(t,y) = \nabla\cdot\l[-b(y,u)\rho(t,y) + \frac12\nabla\cdot\l(\sigma(y)\sigma(y)^\top\rho(t,y)\r)\r], \quad \rho(0,y) = \delta_x(y).
\end{equation}


Similarly to Remark 3.2 \cite{JOZ}, although this definition depends on the sampling scheme $\varphi$ and $(\xi^n)$, the error estimates of the sampled dynamics and the cost function using the sampled dynamics depend only on the properties of the control ${\pi}$.

Having the sampled dynamics, we consider the discrete cost function:
\begin{equation}
\label{eq:regVdis}
V_h[{\pi}](x) = \mathbb{E}\left[ \sum_{i=0}^\infty e^{-\beta ih }h \bigg({r}(Y_{ih}^{{\pi}}, \nu_{ih} )  - \lambda \int_{ U} {\pi}(Y_{ih}^{{\pi}},u)  \ln {\pi}(Y_{ih}^{{\pi}},u)  du  \bigg) \bigg| Y^{{\pi}}_0 = x \right].
\end{equation}

Next, we define
\beq\lb{2.16}
V_h(x):=\sup_{\pi} V_h[{\pi}](x).
\eeq
We will show in Proposition \ref{T.2.3} that there exists a feedback stochastic control $\pi^*_h $ that is the maximizer of the above MDP problem.

\begin{remark}\label{rmk:ralation to cts-time RL}
We emphasize that the discretization error between the feedback policy derived from the above Bellman equation \eqref{2.16} and the true continuous-time optimal control is also relevant to many continuous-time RL algorithms. 

For example, the algorithms proposed in Section 4.1 of \cite{jia2023q} are derived from the optimal control formulation \eqref{eq:expS} - \eqref{eq:regV}. These algorithms are applicable when continuous-time data are available as they require computing continuous-time integrals along trajectories of \( x_t \) under a fixed policy \( \pi\). When only discrete-time observations are available (as in Algorithms 1 and 2 of \cite{jia2023q}), integrals of the form \( \int f(x_t)\,dt \) are approximated by sums \( \sum_i f(x_{t_i})\,\Delta t \). This approximation implicitly corresponds to solving a discrete-time Bellman equation under a fixed policy, thereby introducing a discretization error. This error propagates through subsequent quantities. In particular, one obtains $q(s,a) = \frac{Q_{\Delta t}(s,a) - V(s)}{\Delta t}$, and the induced policy from $q(s,a)$: $\pi(a|s) \propto \exp\!\left(\frac{1}{\gamma} q(s,a)\right) \propto \exp\!\left(\frac{1}{\gamma\dt} Q_{\Delta t}(s,a)\right)$,
which coincides with the soft policy improvement scheme \cite{haarnoja2017reinforcement} for the discrete-time Bellman equation \eqref{2.16} with $\lambda = \gamma \dt$.

Similar phenomena arise broadly in classical continuous-time RL literature \cite{baird1994reinforcement, jia2022policy, doya2000reinforcement}: although methods are motivated by continuous-time formulations, their practical implementations operating on discrete-time data implicitly are solving discrete-time Bellman equation \eqref{2.16} (or their unregularized counterparts). However, the discretization error induced by this mismatch is typically overlooked. Addressing this gap is precisely the focus of this paper.
\end{remark}
\color{black}




\section{Optimal feedback relaxed controls of MDP}

In this section, we prove well-posedness of optimal feedback relaxed controls of MDP.
We will need the following assumptions: 
\begin{enumerate}

\item[(H1)] $ U\subseteq \R^N$ for some $N\geq 1$ is {compact}, and is equipped with the standard Euclidean metric, and $| U|<\infty$, where $|\cdot|$ denotes the volume.

\medskip
    
\item[(H2)] 
Suppose that $b(x,u)$ and $\Sigma(x)$ are uniformly $C^{2,\alpha}$ in $x$.  There exist $\lammin>0$ and $M_1,M_2\geq 1$ such that   $\Sigma(\cdot)\geq \lammin\I_d$, and
\[
\| b(\cdot,\cdot)\|_\infty,\,\, \|\sigma(\cdot)\|_\infty , \,\, \| b(\cdot,\cdot)\|_\L,\,\, \|\sigma(\cdot)\|_\L\leq M_1,
\]
\[
\|r(\cdot,\cdot)\|_\infty,\,\,\|r(\cdot,\cdot)\|_\L\leq M_2.
\]
\end{enumerate}

\vspace{1mm}
Let $\rho_h(y|x,u)$ be the solution to \eqref{def of rho_h} at $t = h$, which represents the p.d.f of $Y_h$ given $Y_0 = x$ for the stochastic process driven by  $dY_t = {b}(Y_t, u)dt + {\sigma}(Y_t)dB_t$. By dynamic programming principle, one has  
\begin{equation}\label{dis-be-soft}
   V_h(x) = \sup_{{\pi}(x,\cdot)}\l\{\int_{ U} \l[{r}_h^\lambda(x,u;\pi)  + \gamma \l(\int_{\R^d} V_h(y) \rho_h(y|x,u)dy \r) \r]{\pi}(x,u) du\r\}.
\end{equation}
where 
\[
\gamma:=e^{-\beta h}\qquad \text{and}
\qquad
{r}_h^\lambda(x,u;\pi) :=r(x,u)h-\lambda h\ln {\pi}(x,u)  .
\]
We provide the rigorous proof of the equivalency between the discrete-time optimal control problem \eqref{eq:regVdis}--\eqref{2.16} and the regularized discrete-time Bellman equation \eqref{dis-be-soft} in Appendix \ref{proof of r-be}.

Define the operator $T^*$ as follows,
\beq\lb{111}
T^* W(x) := \sup_{{\pi}(x,\cdot)}\E_{u\sim\pi(x,\cdot)}\l[ {r}_h^\lambda(x,u;\pi)   + \gamma \E_{Y^{{\pi}}_{h}\sim\rho_h(\cdot|x,u)}[W(Y^{{\pi}}_{h}) | Y_0 = x]\r] .
\eeq
We state key properties of the operator in the following lemmas, which will be used repeatedly later. 
\begin{lemma}[Argmax of $T^*$]\label{dis-operator}
Let $\rho_h$ be defined as the above and $\gamma=e^{-\beta h}$. We have
    $$ T^*W(x) =  \lambda h \ln \l[\int_{U} \exp\l(\frac{r(x,u)h + \gamma\int_{\R^d} W(y) \rho_h(y|x,u) dy}{\lambda h}\r) du\r]$$
    and the supremum of \eqref{111} is achieved at 
    \begin{equation}\label{dis-opt-control-form}
        {\pi}^W_h(u|x): = \frac{1}{Z(x)}\exp\l[\frac{r(x,u)h + \gamma\int  W(y) \rho_h(y|x,u) dy}{\lambda h}\r], 
    \end{equation}
\[
\text{with the normalization constant }\, Z(x) := \int_{U}\exp \left[\frac{r(x,u)h + \gamma \int W(y) \rho_h(y|x,u) dy}{\lambda h}\right]du.
\] 
\end{lemma}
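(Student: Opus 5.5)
This is the standard Gibbs variational principle (Donsker--Varadhan duality) applied pointwise in $x$. Fix $x$ and set
\[
q(u):=r(x,u)h+\gamma\int_{\R^d}W(y)\rho_h(y|x,u)\,dy .
\]
We assume $W$ is bounded and measurable, as it is for every function to which $T^*$ is applied later. Then $q$ is bounded on $U$, because $\|r\|_\infty\le M_2$ by (H2) and $\rho_h(\cdot|x,u)$ is a probability density. By (H1), $|U|<\infty$, so $0<Z(x)<\infty$ and $\pi^W_h(\cdot|x)$ is a well-defined element of $\mathcal{P}(U)$.

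For any density $\uppi\in\mathcal{P}(U)$ (restricting to those with $\int_U\uppi\ln\uppi\,du$ finite, since otherwise the objective is $-\infty$), the objective in \eqref{111} is
\[
J(\uppi)=\int_U \big(q(u)-\lambda h\ln\uppi(u)\big)\uppi(u)\,du .
\]
Since $\ln \pi^W_h(u|x)=q(u)/(\lambda h)-\ln Z(x)$, we can write $q(u)=\lambda h\ln\pi^W_h(u|x)+\lambda h\ln Z(x)$. Substituting gives the key identity
\[
J(\uppi)=\lambda h\ln Z(x)-\lambda h\int_U\uppi(u)\ln\frac{\uppi(u)}{\pi^W_h(u|x)}\,du=\lambda h\ln Z(x)-\lambda h\,\mathrm{KL}\big(\uppi\,\|\,\pi^W_h(\cdot|x)\big).
\]

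Gibbs' inequality (Jensen applied to the concave function $\ln$) gives $\mathrm{KL}\ge0$, with equality if and only if $\uppi=\pi^W_h(\cdot|x)$ almost everywhere. Therefore $\sup_{\uppi} J(\uppi)=\lambda h\ln Z(x)$, which is the claimed formula for $T^*W(x)$, and the supremum is attained uniquely at $\pi^W_h$.

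The only delicate step is checking that the manipulations are legitimate. Boundedness of $q$ makes $\ln\pi^W_h$ bounded, so $\int_U\uppi\ln\pi^W_h\,du$ is finite. The KL term may equal $+\infty$, but then $J(\uppi)=-\infty$ and the inequality $J(\uppi)\le\lambda h\ln Z(x)$ still holds.
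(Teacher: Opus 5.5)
Your proposal is correct and follows essentially the same route as the paper's proof in the appendix. The paper also fixes $x$, rewrites the objective in \eqref{111} as $\lambda h\bigl(\ln Z(x)-\mathrm{KL}(\pi\,\|\,\pi^W_h(\cdot|x))\bigr)$ via $q=\lambda h\ln\pi^W_h+\lambda h\ln Z$, and concludes from the nonnegativity of the KL divergence. The well-posedness checks you add, namely boundedness of $q$, $0<Z(x)<\infty$, and the case of infinite KL giving $J=-\infty$, are left implicit in the paper, and you handle them correctly.
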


From the above lemma, if given the optimal value function $V_h$ from \eqref{2.16}, one has the explicit form of the optimal feedback control of the regularized MDP problem, which is 
$\pi_h^*$ $=$  $\pi_h^{V_h}$ and
\begin{equation}\label{dis-opt-control}
    \pi^*_h(x,u)   =  \frac{1}{Z(x)}\exp\l(\frac{r(x,u)h + \gamma\int  V_h(y) \rho_h(y|x,u) dy}{\lambda h}\r), 
\end{equation}
where $Z(x)= \int_U\exp\l(\frac{r(x,u)h + \int V_h(y) \rho_h(y|x,u) dy}{\lambda}\r)du$  is a normalization constant.

\medskip

Next, we give the contractive property of the operator $T^*$ in the following Lemma. 
\begin{lemma}[Contraction]\label{dis-opt-contraction}
For any bounded functions $W_1$ and $W_2$,
    $$\ll  T^*W_1(\cdot) - T^*W_2(\cdot) \rl_\infty \leq \gamma \ll W_1(\cdot) - W_2(\cdot) \rl_\infty.$$
\end{lemma}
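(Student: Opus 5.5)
We prove the contraction directly from the variational (sup) definition \eqref{111}, without differentiating the log-sum-exp formula of Lemma \ref{dis-operator}. The argument is the standard one for Bellman operators. Because the entropy term is part of the running reward, it is the same for both $W_1$ and $W_2$ once the policy is fixed.

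Fix $x\in\R^d$ and any density $\pi(x,\cdot)\in\mathcal P(U)$ for which the entropy term is finite. Define
\[
J_W(\pi):=\int_U\Big[r(x,u)h-\lambda h\ln\pi(x,u)+\gamma\int_{\R^d}W(y)\rho_h(y|x,u)\,dy\Big]\pi(x,u)\,du .
\]
Then $T^*W(x)=\sup_\pi J_W(\pi)$. Since $W_i$ is bounded, $r$ is bounded and $|U|<\infty$, both $J_{W_1}(\pi)$ and $J_{W_2}(\pi)$ are finite, and so is $T^*W_i(x)$. The latter follows from Lemma \ref{dis-operator}, or directly from the bound $-\int\pi\ln\pi\le\ln|U|$. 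Subtracting,
\[
J_{W_1}(\pi)-J_{W_2}(\pi)=\gamma\int_U\Big(\int_{\R^d}(W_1-W_2)(y)\rho_h(y|x,u)\,dy\Big)\pi(x,u)\,du .
\]
Here the reward and entropy terms cancel. Since $\rho_h(\cdot|x,u)$ is a probability density, being the law of $Y_h$ (mass is preserved by \eqref{def of rho_h} and $\rho_h\ge0$), and $\pi(x,\cdot)$ is a probability density, this difference is bounded in absolute value by $\gamma\|W_1-W_2\|_\infty$.

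Hence $J_{W_1}(\pi)\le J_{W_2}(\pi)+\gamma\|W_1-W_2\|_\infty\le T^*W_2(x)+\gamma\|W_1-W_2\|_\infty$ for every admissible $\pi$. Taking the supremum over $\pi$ gives $T^*W_1(x)-T^*W_2(x)\le\gamma\|W_1-W_2\|_\infty$. Exchanging the roles of $W_1$ and $W_2$ and then taking the supremum over $x$ yields the claim.

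As a cross-check, one can also use the closed form of Lemma \ref{dis-operator}. The map $F(g):=\lambda h\ln\int_U e^{(r h+g(u))/(\lambda h)}du$ is monotone in $g$ and satisfies $F(g+c)=F(g)+c$ for constants $c$. With $g_i(u)=\gamma\int W_i\rho_h(\cdot|x,u)$ we have $g_1\le g_2+\gamma\|W_1-W_2\|_\infty$, which gives the same bound.

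The only step needing care is that the supremum is over all densities $\pi(x,\cdot)$, some of which may have $\int\pi\ln\pi=-\infty$. For such $\pi$, both $J_{W_1}(\pi)$ and $J_{W_2}(\pi)$ equal $-\infty$ and do not affect the supremum. I expect no real obstacle beyond recording that $\int\rho_h(y|x,u)\,dy=1$.
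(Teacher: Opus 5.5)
Your argument is correct, but your main route differs from the paper's.

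\textbf{The paper's route.} Its proof (in the appendix) works entirely with the closed form of Lemma \ref{dis-operator}. It writes $T^*W_1-T^*W_2$ as $\lambda h$ times the log of a ratio of partition functions. It then bounds the factor $\exp\bigl(\gamma\int(W_1-W_2)\rho_h\,dy/(\lambda h)\bigr)$ inside the $u$-integral by $\exp\bigl(\gamma\|W_1-W_2\|_\infty/(\lambda h)\bigr)$ and pulls it out. This is essentially your cross-check: log-sum-exp is monotone and commutes with adding constants.

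\textbf{Your route.} You argue policy by policy from the variational definition \eqref{111}. For a fixed $\pi(x,\cdot)$ the reward and entropy terms cancel. The comparison then reduces to the contraction of the fixed-policy operator $T^\pi$ shown in the appendix, followed by a supremum.

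\textbf{What each buys.} Your version needs neither the explicit maximizer nor attainment of the supremum. It would therefore apply verbatim to any regularizer that does not depend on $W$. Your step of exchanging $W_1$ and $W_2$ also gives both one-sided bounds explicitly. The paper's display only bounds the ratio inside $|\ln(\cdot)|$ from above and leaves the symmetric lower bound implicit. In exchange, once Lemma \ref{dis-operator} is available, the paper's computation is a single line.

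\textbf{A small sign slip.} Since $|U|<\infty$, Jensen gives $\int_U\pi\ln\pi\,du\ge-\ln|U|$. So the densities you need to discard are those with $\int_U\pi\ln\pi\,du=+\infty$, for which the entropy term $-\lambda h\int_U\pi\ln\pi\,du$ equals $-\infty$. You wrote $\int\pi\ln\pi=-\infty$, which cannot occur. Your conclusion that $J_{W_1}(\pi)=J_{W_2}(\pi)=-\infty$ for these $\pi$, so they do not affect the supremum, is still correct.
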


Based on the above two lemmas, we conclude with the following result. 
\begin{proposition}[Well-posedness of MDP feedback controls]\lb{T.2.3}
We have that \eqref{2.16} admits a unique solution $V_h(x)\in L^\infty(\R^d)$ that is the fixed point of $T^*$, and the supremum in \eqref{dis-be-soft} is achieved at $ \pi^*_h$ given by \eqref{dis-opt-control}.
\end{proposition}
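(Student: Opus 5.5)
The plan is to combine Lemma \ref{dis-opt-contraction} with the Banach fixed point theorem on $L^\infty(\R^d)$, and then identify the fixed point with the value function $V_h$ of \eqref{2.16} by a verification argument.

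\emph{Step 1: $T^*$ maps $L^\infty$ into itself.} By Lemma \ref{dis-operator}, $T^*W(x)=\lambda h\ln\int_U\exp\big((r h+\gamma\int W\rho_h)/(\lambda h)\big)du$. The exponent lies between $\pm(M_2h+\gamma\|W\|_\infty)/(\lambda h)$, and $|U|<\infty$ by (H1). Hence
\[
|T^*W(x)|\le M_2h+\gamma\|W\|_\infty+\lambda h|\ln|U||,
\]
which is finite. Measurability in $x$ follows from the measurability of $x\mapsto\int W(y)\rho_h(y|x,u)dy$. Since $\gamma=e^{-\beta h}<1$, Lemma \ref{dis-opt-contraction} shows that $T^*$ is a strict contraction. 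Banach's theorem therefore yields a unique fixed point $W^*\in L^\infty$.

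\emph{Step 2: identify $W^*$ with $V_h$.} Let $\pi^*:=\pi_h^{W^*}$ be given by \eqref{dis-opt-control-form}. It is bounded above and below by positive constants, and it attains the supremum in $T^*W^*$ by Lemma \ref{dis-operator}.

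For a fixed feedback policy $\pi$ with bounded entropy term, define the policy evaluation operator
\[
T^\pi W(x)=\int_U\Big[r_h^\lambda(x,u;\pi)+\gamma\int W\rho_h(y|x,u)dy\Big]\pi(x,u)du.
\]
It is also a $\gamma$-contraction. By the Markov property of the sampled chain $Y^\pi_{ih}$ (Appendix \ref{proof of r-be}), $V_h[\pi]$ is its unique bounded fixed point; this follows by iterating $T^\pi$ and summing the series \eqref{eq:regVdis}.

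Since $T^{\pi^*}W^*=T^*W^*=W^*$, we get $W^*=V_h[\pi^*]\le V_h$. Conversely, for any admissible $\pi$ we have $T^\pi W\le T^*W$, and $T^\pi$ is monotone. Iterating gives
\[
(T^\pi)^nW^*\le (T^*)^nW^*=W^*.
\]
Letting $n\to\infty$ yields $V_h[\pi]\le W^*$, and hence $V_h\le W^*$. Thus $V_h=W^*$ is the unique bounded solution of \eqref{dis-be-soft}. The supremum in \eqref{dis-be-soft} is attained at $\pi^*_h=\pi_h^{V_h}$, which is \eqref{dis-opt-control}.

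\emph{Main obstacle.} The delicate point is the class of competing policies $\pi$ in \eqref{2.16}. A policy with unbounded or $-\infty$ entropy, or one whose densities vanish, must not break the identification $V_h[\pi]=\lim(T^\pi)^nW$.

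The first approach I would try uses the bound $-\int\pi\ln\pi\le\ln|U|$, valid for every density on $U$. This makes the reward per step bounded above uniformly in $\pi$, so $V_h[\pi]$ is well defined in $[-\infty,C]$. The comparison $V_h[\pi]\le W^*$ then still follows from truncating the sum at $n$ steps and applying monotonicity. Unbounded-below policies only make the inequality easier.
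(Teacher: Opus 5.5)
Your proof is correct, but it runs the identification in the opposite direction from the paper. The paper (Lemmas \ref{dis-operator}, \ref{dis-opt-contraction} and Appendix \ref{proof of r-be}) starts from the value function $V_h=\sup_\pi V_h[\pi]$ and proves directly that it solves the Bellman equation. It gets $V_h\le T^*V_h$ from $V_h[\pi]=T^\pi V_h[\pi]\le T^\pi V_h\le T^*V_h$. It gets $T^*V_h\le V_h$ from an $\varepsilon$-maximizer $\pi_\varepsilon$ of $T^*V_h$ and the estimate $\|V_h-V_h[\pi_\varepsilon]\|_\infty\le\varepsilon/(1-\gamma)$. Uniqueness then comes from the contraction lemma.

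You instead build the fixed point $W^*$ by Banach's theorem and verify it from both sides: $W^*=V_h[\pi_h^{W^*}]\le V_h$ using the exact maximizer, and $V_h[\pi]=\lim_n(T^\pi)^nW^*\le W^*$ using monotonicity and $T^\pi\le T^*$. The ingredients are the same, but your ordering never applies $T^\pi$ or $T^*$ to $V_h$ itself. So you do not need to know beforehand that the pointwise supremum $V_h$ is measurable and bounded. Steps 2--3 of the appendix use these facts tacitly, to make sense of $\int V_h\rho_h\,dy$ and to absorb $\gamma\|V_h-V_h[\pi_\varepsilon]\|_\infty$; in your argument they follow from $V_h=W^*$.

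Your route also gives directly that the supremum in \eqref{2.16} is attained, $V_h=V_h[\pi_h^*]$, which the paper relies on in Corollary \ref{C.3.4}. Your bound $-\int_U\pi\ln\pi\,du\le\ln|U|$ handles competing policies with infinite entropy cost, a case the paper leaves implicit. The price is that you lean on the exact maximizer from Lemma \ref{dis-operator} rather than $\varepsilon$-maximizers, but that maximizer is available here.

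One point you should state explicitly: $\pi_h^{W^*}$ must belong to the class of policies over which \eqref{2.16} is taken. If that class imposes regularity in $x$, such as the Lipschitz condition used for $V[\pi]$, it is satisfied. The map $x\mapsto\int W^*(y)\rho_h(y|x,u)\,dy$ is Lipschitz uniformly in $u$ by the gradient bound \eqref{den2}, and $r$ is Lipschitz by (H2). Hence the exponent in \eqref{dis-opt-control-form} is bounded and Lipschitz in $x$.
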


Closely related results have appeared in the RL literature \cite{ziebart2008maximum, haarnoja2018soft}. We present the proofs in Appendix \ref{proof of lemma dis-operator} for completeness.

\medskip

In the following lemma, we show that $ \pi^*_h$ is uniformly bounded depending on $\lambda$.
\begin{lemma}[Boundedness of $\pi_h^*$]\lb{L.2.4}
Under the assumptions of {\rm (H1)(H2)}, suppose that $C_d\|\nabla\sigma\|_\infty h^\frac12\leq \frac12$ for some dimensional constant $C_d$. We have
    \[
    \begin{aligned}
        \ll  \pi^*_h(x,u) \rl_\infty \leq \exp\l(\frac{2\|r\|_\infty+16\|b\|_\infty\|\nabla V_h\|_\infty}{\lambda}\r).
    \end{aligned}
    \]
\end{lemma}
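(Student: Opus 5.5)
The bound will come from the explicit Gibbs form \eqref{dis-opt-control}. Writing
\[
F(x,u):=r(x,u)h+\gamma\int_{\R^d}V_h(y)\rho_h(y|x,u)\,dy ,
\]
we have $\pi^*_h(x,u)=e^{F(x,u)/(\lambda h)}/\int_U e^{F(x,u')/(\lambda h)}du'$. Hence for every $u$,
\[
\pi^*_h(x,u)\le \frac{1}{|U|}\exp\Big(\frac{\sup_{u,u'}|F(x,u)-F(x,u')|}{\lambda h}\Big),
\]
and it suffices to show that the oscillation of $F(x,\cdot)$ over $U$ is at most $h\,(2\|r\|_\infty+16\|b\|_\infty\|\nabla V_h\|_\infty)$. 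The factor $1/|U|$ must also be absorbed; if $|U|<1$ the extra constant is harmless, or one normalizes as the authors implicitly do. The reward part contributes at most $2\|r\|_\infty h$ trivially. The whole difficulty is the transition part, where we must prove
\[
\Big|\int V_h(y)\big(\rho_h(y|x,u)-\rho_h(y|x,u')\big)dy\Big|\le 16\,\|b\|_\infty\|\nabla V_h\|_\infty\, h .
\]
We use $\gamma\le1$ here. Note that a naive bound by $2\|V_h\|_\infty$ would give a $1/h$ blow-up in the exponent, so the $O(h)$ gain is essential.

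We prove this by coupling. Let $Y^u,Y^{u'}$ solve $dY=b(Y,u)dt+\sigma(Y)dB$ with the same Brownian motion and $Y_0=x$. Then the difference equals $\E[V_h(Y^u_h)-V_h(Y^{u'}_h)]$, which is bounded by $\|\nabla V_h\|_\infty\,\E|Y^u_h-Y^{u'}_h|$. We take the Lipschitz bound on $V_h$ as given, as the statement itself does by featuring $\|\nabla V_h\|_\infty$; this is presumably Lemma~\ref{lemma:bd of v vx}. Setting $D_t=Y^u_t-Y^{u'}_t$, we get
\[
D_t=\int_0^t\big(b(Y^u_s,u)-b(Y^{u'}_s,u')\big)ds+\int_0^t\big(\sigma(Y^u_s)-\sigma(Y^{u'}_s)\big)dB_s .
\]
The drift term is at most $2\|b\|_\infty t$. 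The martingale term is controlled by Burkholder--Davis--Gundy (or Itô isometry) together with the Lipschitz bound on $\sigma$:
\[
\E\sup_{s\le t}|D_s|\le 2\|b\|_\infty t+C_d\|\nabla\sigma\|_\infty\,\E\Big(\int_0^t|D_s|^2ds\Big)^{1/2}\le 2\|b\|_\infty t+C_d\|\nabla\sigma\|_\infty t^{1/2}\,\E\sup_{s\le t}|D_s| .
\]
Under the hypothesis $C_d\|\nabla\sigma\|_\infty h^{1/2}\le\frac12$, the last term is absorbed into the left side for $t\le h$. This gives $\E\sup_{s\le h}|D_s|\le 4\|b\|_\infty h$. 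Tracking constants more crudely (for instance, the square inside BDG, or an $L^2$ estimate followed by Cauchy--Schwarz) inflates $4$ to at most $16$. This explains the stated constant and is exactly where the smallness assumption on $h$ is used.

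Combining the two parts, the exponent is at most $(2\|r\|_\infty h+16\|b\|_\infty\|\nabla V_h\|_\infty h)/(\lambda h)$, which gives the claim. The main obstacle is the coupling estimate. The drift difference is only $O(h)$ pathwise, but the diffusion difference feeds back through $\sigma$'s dependence on the state, so a Grönwall or absorption argument at scale $h^{1/2}$ is needed to keep the overall bound linear in $h$.
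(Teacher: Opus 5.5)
Your argument is correct and is essentially the paper's proof: synchronous coupling of the controlled diffusions, BDG with absorption under $C_d\|\nabla\sigma\|_\infty h^{1/2}\le\frac12$ to get $\E|Y^u_h-Y^{u'}_h|\le 4\|b\|_\infty h$, the Lipschitz bound on $V_h$, and the Gibbs form of $\pi_h^*$. One difference: the paper compares each $u$ with a fixed reference control $u_0$ through $\phi(x)=\E V_h(Y_{0,h})$, which costs a factor $2$ and yields $8$. Your pairwise coupling gives $4$ directly, so the speculation about crude constant tracking to reach $16$ is unnecessary.

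Dropping the factor $1/|U|$ is legitimate only when $|U|\ge 1$, not when $|U|<1$ as you wrote. This caveat applies equally to the paper's proof.
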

We remark that, as shown in Lemma \ref{lemma:bd of v vx}, provided that $\beta \geq 1 + 2\|\nabla b\|_\infty + \frac{\|\nabla \Sigma\|^2_\infty}{4\lambda_{\min}}$, we have the following bound:
\[
\ll\nb_xV_h(\cdot) \rl_\infty\leq   e^h \ll \nb_x r \rl_\infty.
\]

\begin{proof}
    By the formula of $\pi^*_h $ in \eqref{dis-opt-control}, 
    for any function $\phi(x)$ independent of $u$, we have
\[
    \pi^*_h(x,u)  =  \frac{1}{ Z_\phi(x)}\exp\l(\frac{r(x,u)h + \gamma\int_{\R^d}  V_h(y) \rho_h(y|x,u) dy-\gamma\phi(x)}{\lambda h}\r), 
\]
where $Z_\phi(x)= \int_{U}\exp\l(\frac{r(x,u)h + \gamma\int_{\R^d} V_h(y) \rho_h(y|x,u) dy-\gamma\phi(x)}{\lambda}\r)du$. 
Therefore,  to obtain an $L^\infty$-bound for $\pi^*_h$, it suffices to estimate 
\[
\int_{\R^d} V_h(y) \rho_h(y|x,u) dy -\phi(x)
\]
for a suitable choice of $\phi(x)$.

Let us fix $u_0\in U$, let $\rho_0(t,y)$ be the unique solution to \eqref{def of rho_h}, and $Y_{0,t}$ solves 
\[
dY_{0,t} = {b}(Y_{0,t}, u_0)dt + {\sigma}(Y_{0,t})dB_t\quad\text{and}\quad Y_{0,0} = x.
\]
Then direct computation yields
\beq\lb{4.12}
\begin{aligned}
|Y_{0,t}-Y_t| &\leq \int_0^t|b(Y_{0,s},u_0)-b(Y_{s},u)|ds+\int_0^t|\sigma(Y_{0,s})-\sigma(Y_{s})|dB_s \\
&\leq 2\|b\|_\infty t+\int_0^t|\sigma(Y_{0,s})-\sigma(Y_{s})|dB_s.
\end{aligned}
\eeq
By the Burkholder-Davis-Gundy inequality (see \cite[Chapter IV]{revuz2013continuous}), there exists a dimensional constant $C_d$ such that
\begin{align*}
\bbE\left[\sup_{t_0\in [0,{t}] }\left|\int_0^{t_0}  (\sigma(Y_{0,s} )-\sigma(Y_s))dB_s \right|\right]
&\leq C_d\,\bbE \left[\left(\int_0^{t} |\sigma(Y_{0,s} )-\sigma(Y_s)|^2 ds\right)^{1/2}\right]\\
&\leq C_d\|\nabla\sigma\|_\infty t^\frac12  \bbE\Big[\sup_{t_0\in [0,{t}]}|Y_{0,t_0}-Y_{0,t_0}| \Big].
\end{align*}
We get from \eqref{4.12} that
\begin{align*}
\bbE\sup_{t\in [0,h] }|Y_{0,t}-Y_t|
&\leq 2\|b\|_\infty h+C_d\|\nabla\sigma\|_\infty h^\frac12 \, \bbE\Big[\sup_{t\in [0,h]}|Y_{0,t}-Y_{t}| \Big].
\end{align*}
Consequently, if $C_d\|\nabla\sigma\|_\infty h^\frac12\leq \frac12$, 
\begin{align}\lb{4.10}
\bbE|Y_{0,h}-Y_h| \leq  4\|b\|_\infty h.
\end{align}

Now, recall that $\rho_h(y|x,u)$ is the p.d.f of $Y_h$ and so 
\[
\int_{\R^d} V_h(y) \rho_h(y|x,u) dy=\bbE V_h(Y_h).
\]
We also let $\rho_0$ be the p.d.f of $Y_{0,h}$ and then
\[
\phi(x):=\int_{\R^d} V_h(y) \rho_0(y) dy=\bbE V_h(Y_{0,h})
\]
which is independent of $u$. It follows from \eqref{4.10} that
\begin{align*}
   \left| \int_{\R^d} V_h(y) \rho_h(y|x,u) dy -\phi(x)\right|
&=\l|\bbE (V_h(Y_h)-V_h(Y_{0,h}))\r|\\
&\leq \|\nabla V_h\|_\infty\bbE|Y_{h}-Y_{0,h}|\leq 4\|b\|_\infty \|\nabla V_h\|_\infty h.
\end{align*}
With this selection of $\phi$,  we have
\begin{align*}
    \ll \pi^*_h \rl_\infty &\leq \frac{1}{|{U}|} \exp\l(\frac{2\ll r(x,u) \rl_\infty h+2\gamma(\int_{\R^d} V_h(y) \rho_h(y|x,u) dy -\phi(x))}{\lam h}\r)\\
    &\leq \exp\l(\frac{2\|r\|_\infty+8\|b\|_\infty\|\nabla V_h\|_\infty}{\lambda}\r).
\end{align*}
\end{proof}

The following results concern the regularity property of the continuous value function and are a consequence of  the classical PDE regularity theory.

\begin{lemma}[Regularity of $V$]\lb{L.3.5}
Under the assumptions of {\rm (H1)(H2)}, then $V$ from \eqref{eq:regV} is uniformly $C^{2,\alpha}$ for any $\alpha\in (0,1)$. Furthermore, 
there exists $C\geq 1$ depending only on $d,\lambda_0$ and the constants in {\rm (H2)}, but independent of $\beta\geq 1$ and any $\lambda\in (0,\lambda_0)$, such that
\[
\|V\|_\infty\leq \| r\|_\infty/\beta,\quad \|\nabla V\|_{\infty}\leq C/\sqrt{\beta}\quad\text{and}\quad \|D^2V\|_{\infty}\leq C.
\]
Consequently, ${\pi}^*(x,u)$ from \eqref{eq:regfeedbackC} is uniformly bounded and is Lipschitz continuous in $x$ with bounds of the form $\exp(C/\lambda)$.
\end{lemma}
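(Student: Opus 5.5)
We would treat \eqref{main} as a uniformly elliptic semilinear equation $-\beta V+\frac12\tr(\Sigma D^2V)+H_\lambda(x,\nabla V)=0$ with Hamiltonian
\[
H_\lambda(x,p):=\lambda\ln\int_U \exp\Big(\frac{r(x,u)+b(x,u)\cdot p}{\lambda}\Big)du .
\]
The structural point is that $H_\lambda$ is bounded and Lipschitz in $p$ uniformly in $\lambda\in(0,\lambda_0)$. Its gradient $\nabla_pH_\lambda=\int_U b\,\uppi\,du$ is an average of $b$ against a probability density, so $|\nabla_pH_\lambda|\le M_1$. Similarly $\nabla_x H_\lambda=\int_U(\nabla_x r+\nabla_x b\,p)\uppi\,du$ is bounded by $M_2+M_1|p|$. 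Finally $|H_\lambda(x,0)|\le \|r\|_\infty+\lambda_0|\ln|U||$.

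Existence of a classical solution and its identification with \eqref{eq:regV} by verification are taken from \cite{TZZ}. We then prove the estimates in order.

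\textbf{Step 1: sup bound.} Apply the comparison principle at a maximum and at a minimum point, or use the probabilistic representation. The entropy term is controlled by noting $-\lambda\int\uppi\ln\uppi\le\lambda\ln|U|$. This gives $\|V\|_\infty\le\|r\|_\infty/\beta$, up to the harmless $\lambda\ln|U|$ term, which one normalizes away, e.g. by assuming $|U|\le1$ or absorbing it into the stated constant.

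\textbf{Step 2: gradient bound $C/\sqrt\beta$.} Use a Bernstein-type argument. Differentiate the equation in direction $e_k$ and consider $w=|\nabla V|^2$, or more robustly $w=|\nabla V|^2+\mu V^2$. At a maximum point of $w$, uniform ellipticity $\Sigma\ge\lambda_{\min}\I_d$ produces a good term $\lambda_{\min}|D^2V|^2$. This term absorbs the contributions from $\nabla\Sigma\, D^2V\,\nabla V$. The drift contribution $\nabla_pH_\lambda\cdot\nabla w$ vanishes at the maximum. What remains is $-2\beta|\nabla V|^2+2|\nabla V|(M_2+M_1|\nabla V|)+C|\nabla V|^2$. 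For $\beta$ large the quadratic terms are absorbed, leaving $\beta|\nabla V|^2\lesssim |\nabla V|$, plus lower-order terms; coupling with $\mu V^2$ and $\|V\|_\infty\le C/\beta$ then gives $|\nabla V|\le C/\sqrt\beta$.

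Since $\mathbb{R}^d$ is unbounded, a maximum need not be attained. We handle this with the usual penalization $w-\epsilon\langle x\rangle$, or by working with difference quotients. For the moderate range $\beta\in[1,\beta_0]$, the bound $\|\nabla V\|\le C$ already suffices.

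Alternatively, one can use a coupling argument: run the optimal control $\pi^*(X_t^x,\cdot)$ frozen along one trajectory, and use synchronous coupling of the $\sigma$-part. This gives a Lipschitz estimate. However, the dependence on $\beta$ is cleanest via interpolation, $\|\nabla V\|^2\lesssim\|V\|_\infty\|D^2V\|_\infty\le C/\beta$. This interpolation actually delivers Step 2 directly once Step 3 is known, so we would order the arguments as: Step 1, then a crude gradient bound, then Step 3, then interpolation.

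\textbf{Step 3: Hessian and $C^{2,\alpha}$ bound.} Rewrite the equation as a linear one,
\[
\tfrac12\tr(\Sigma D^2V)-\beta V=f(x):=-H_\lambda(x,\nabla V).
\]
By the bounds above, $f$ is bounded and, given the Lipschitz bound on $\nabla V$, $f$ is Lipschitz uniformly in $\lambda$, hence $C^\alpha$ uniformly. Interior Schauder estimates on unit balls, uniform over centers and with $\Sigma\in C^{2,\alpha}$, give $\|V\|_{C^{2,\alpha}(B_1)}\le C(\|f\|_{C^\alpha(B_2)}+(1+\beta)\|V\|_{L^\infty(B_2)})$. Using $\beta\|V\|_\infty\le\|r\|_\infty$ bounds the right side uniformly in $\beta$.

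The main obstacle is that the first-order estimate must be independent of $\lambda$. This holds precisely because $\nabla_pH_\lambda$ and $\nabla_xH_\lambda$ are averages against $\uppi$ and therefore involve no factor $1/\lambda$. By contrast, $D^2_pH_\lambda$ is of order $1/\lambda$, so we must avoid differentiating $H_\lambda$ twice. This rules out a second-derivative Bernstein argument and is why Schauder theory on the linearized equation is used instead.

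\textbf{Consequence for $\pi^*$.} In \eqref{eq:regfeedbackC}, the exponent $(r+b\cdot\nabla V)/\lambda$ has oscillation in $u$ bounded by $C/\lambda$. Hence $\pi^*\le|U|^{-1}e^{2C/\lambda}$. Its $x$-gradient is bounded by $\lambda^{-1}\|\nabla_x(r+b\cdot\nabla V)\|_\infty\,\pi^*\le (C/\lambda)e^{C/\lambda}\le e^{C'/\lambda}$, using the bound on $D^2V$.
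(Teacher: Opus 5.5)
Your route differs from the paper's, which never isolates a gradient bound. The paper applies a Schauder estimate for concave fully nonlinear operators, whose structure conditions hold uniformly in $\lambda$, at fixed $\beta$. It then removes the $\beta$-dependence by the rescaling $w(x)=V(x/\sqrt{\beta})$, which has unit discount and coefficients whose norms only improve for $\beta\ge 1$. Working instead with the semilinear form, using the $\lambda$-free bounds on $\nabla_pH_\lambda$ and $\nabla_xH_\lambda$, is a sound and more elementary alternative. Your caveat about $\lambda\ln|U|$ in Step 1 is also correct: the paper's sub/supersolution check tacitly needs $|U|=1$.

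\textbf{Gap 1: Step 3 is circular as written.} You say $f=-H_\lambda(x,\nabla V)$ is Lipschitz ``given the Lipschitz bound on $\nabla V$''. But that is a bound on $D^2V$, which is exactly what Step 3 is supposed to produce. With only $\|\nabla V\|_\infty\le C$ available, $f$ is merely bounded, so the Schauder estimate cannot be started. You need an intermediate $C^{1,\alpha}$ step. The equation $\tfrac12\tr(\Sigma D^2V)=\beta V+f$ has a right-hand side bounded uniformly in $\lambda$ and $\beta$. Interior $W^{2,p}$ estimates for all $p<\infty$, together with Morrey's embedding, then give $[\nabla V]_{C^{\alpha}}\le C$. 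Only at that point is $f\in C^{\alpha}$ uniformly in $\lambda$, because $H_\lambda$ is Lipschitz in $(x,p)$ on bounded sets with $\lambda$-free constants.

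\textbf{Gap 2: the $\beta$-independence of your Schauder constant is asserted, not proved.} Your interpolation $\|\nabla V\|_\infty^2\lesssim \|V\|_\infty\|D^2V\|_\infty\le C/\beta$ rests on it. The standard interior estimate depends on the $C^\alpha$ norm of the zero-order coefficient $-\beta$. If you instead move $\beta V$ to the right-hand side, you must control $\beta[V]_{C^\alpha(B_2)}\lesssim\beta\|\nabla V\|_\infty$, which is $O(\beta)$ under an $O(1)$ gradient bound. There are two ways to repair this.
\begin{itemize}
\item Apply the unit-discount Schauder estimate after rescaling by $\beta^{-1/2}$, as the paper does. This can be done globally or on balls of radius $\beta^{-1/2}$, keeping $-w$ inside the operator.
\item Use what your Bernstein computation already gives. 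After absorbing the quadratic terms it reads $\beta|\nabla V|^2\lesssim|\nabla V|$, i.e. $\|\nabla V\|_\infty\le C/\beta$ for $\beta$ large. This makes $\beta V$ uniformly Lipschitz, closes the Schauder step, and yields the stated $C/\sqrt{\beta}$ without any interpolation.
\end{itemize}

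\textbf{A further technical point.} The Bernstein argument requires $V\in C^3$, which is not available when $r$ is only Lipschitz. It must therefore be run with mollified $r$ and passed to the limit.
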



\begin{proof}
Since $\pm \|\tilde r\|_\infty/\beta$ is a sub- and a super- solution, respectively, to \eqref{main}, the first claim follows from the comparison principle and that $\|\tilde r\|_\infty=\| r\|_\infty$.

Let $\calS^d$ be the space of $d\times d$ symmetric matrices equipped with the spectral norm.
Note that $V$ is the unique viscosity solution to
\[
F_\lambda(D^2V,\nabla V,V,x)=0
\]
where $F_\lambda: \mathcal{S}^d \times \mathbb{R}^d \times \mathbb{R} \times \mathbb{R}^d \to \mathbb{R}$ is given by
\[
F_\lambda(X,p,s,x):=\beta s - \lambda \ln \int_{ U} \exp\left[\frac{1}{\lambda} \big(r(x,u) + b(x,u) \cdot p \big) \right] du+ \frac{1}{2}\tr(\sigma(x) \sigma(x)^T X).
\]
It was verified in 
Theorem 10 \cite{TZZ} that the operator $F_\lambda$ satisfies the regularity assumptions (a)-(c) \cite{TZZ} uniformly for all $\lambda\in (0,\lambda_0)$, and it is concave in the sense of Definition 4 \cite{TZZ}. 
By direct computation, there exists a dimensional constant $C$ 
such that for any $x,y\in \bbR^d$ and $(X,p,q,s,t)\in \calS^d\times\bbR^{d}\times\bbR^{d}\times\bbR\times\bbR$, 
\begin{align*}
|F_\lambda(X,p,s,x)-F_\lambda(X,q,t,y)|&\leq \beta|s-t|+\|r\|_\L|x-y|+\|b\|_\infty|p-q|\\
&\quad +C(\|b\|_\L+\|\Sigma\|_\L)|x-y|(|p|+|q|+|X|),    
\end{align*}
\[
|F(0,0,0,x)|\leq \|r\|_\infty.
\]
If writing
\[
\beta_2(x,y)=C(\|b\|_\L+\|\Sigma\|_\L)|x-y|,\quad \gamma_2(x,y)=\|r\|_\L|x-y|,
\]
it follows from the Schauder estimate, see \cite[Theorem 5.1]{lian2020pointwise} (with the above $\beta_2$ and $\gamma_2$) and \cite{CC95}, that $V$ is uniformly $C^{2,\alpha}$ and
\beq\lb{4.44}
\|V\|_{C^{2,\alpha}}\leq C(\|V\|_\infty+\|r\|_{C^\alpha})\leq C(\|V\|_\infty+\|r\|_{\infty}+\|r\|_\L)
\eeq
for some $C$ possibly depending on $d,\beta,\lambda_0$ and the constants in (H2), but independent of $\lambda\in (0,\lambda_0)$.

To see that the $C^2$ norm is independent of $\beta\geq 1$,  
we apply a scaling argument.
Note that $w(x):={V}(x/\sqrt{\beta})$ solves
\[
G_{\lambda/\beta}(D^2w,Dw,w,x)=0
\]
where
\[
G_{\lambda/\beta}(X,p,s,x):= s - \frac\lambda\beta \ln \int_{ U} \exp\left[\frac{\beta}{\lambda} \big( r_\beta(x,u) +  b_\beta(x,u) \cdot p \big) \right] du+ \frac{1}{2}\tr(\sigma_\beta(x) \sigma_\beta(x)^T X)
\]
and
\[
r_\beta(x,u):= \frac{{r}(x/\sqrt{\beta},u)}\beta ,\qquad b_\beta(x,u):=\frac{{b}(x/\sqrt{\beta},u)}{\sqrt{\beta}} \quad\text{and}\quad \sigma_\beta(x,u):=\sigma(x/\sqrt{\beta}).
\]
Since $|V|\leq \| r\|_\infty/\beta$, then $|w|\leq \| r\|_\infty/\beta$.
Furthermore, it is direct to see that the $L^\infty$ and Lipchitz norms of 
\[
\beta r_\beta(x,u), \qquad  \sqrt{\beta}\, b_\beta(x,u),\qquad \sigma_\beta(x,u)
\]
are non-increasing as $\beta\geq 1$ increases, and also that ${\sigma_\beta}{\sigma_\beta}^T(x,u)\geq \lammin\I_d$ is preserved, and $\lambda/\beta\leq \lambda$. 

Recall that the regularity estimate \eqref{4.44} is independent of $\lambda$. So, the regularity estimate for $G_{\lambda/\beta}$ is independent of $\lambda$ and is only depending on the $L^\infty$ and Lipschitz continuity of the coefficients. Therefore,  we obtain  
\[
\|w\|_{C^{2,\alpha}}\leq C(\|w\|_\infty+\| r_\beta\|_\infty+\| r_\beta\|_\L)\leq C/{\beta}
\]
for some $C$ depending only on $d$ and the $L^\infty$ and Lipchitz norms of 
$ r_\beta(x,u)$, $b_\beta(x,u)$ and $\sigma_\beta(x,u)$ (in $x$),
but is  independent of $\lambda\in (0,\lambda_0)$ and $\beta\geq 1$.
This yields the conclusion on $V$ after transforming $w$ back to $V$.

Finally, it follows from \eqref{eq:regfeedbackC} that ${\pi}^*(x,u)$ is uniformly bounded and Lipschitz continuous in $x$, with a bound of the form $\exp(C/\lambda)$, where $C$ depends on $d$ and the constants in {\rm (H2)}.
\end{proof}

Lastly, we estimate the regularity of $V[\pi]$ given a bounded feedback control $\pi$.

\begin{lemma}[Regularity of ${V[\pi]}$]\lb{L.2.8}
Suppose that $\pi$ is uniformly bounded. Then we have for all $\lambda\in (0,\lambda_0)$ and $\beta\geq 1$, 
\[
\|V[\pi](\cdot)\|_\infty \leq \lambda \ln\|\pi\|_\infty/\beta,\quad \|\nabla V[\pi](\cdot)\|_\infty \leq C/\sqrt{\beta},
\]
where $C>0$ depends only on $d,\|\Sigma\|_{C^1}, \|r\|_\infty,\|b\|_\infty$ and $\lambda_0\| \int_{ U}  \pi\ln \pi\, du\|_\infty$.
Furthermore, if $\pi(x,u)$ is uniformly Lipschitz in $x$, then $V\in C^2$.
\end{lemma}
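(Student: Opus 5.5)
The plan is to write $V[\pi]$ as the unique bounded solution of the \emph{linear} elliptic equation
\[
-\beta W + \t b(x,\pi(x,\cdot))\cdot\nabla W + \tfrac12\tr(\Sigma\nabla^2 W) + f_\pi(x)=0,\qquad f_\pi(x):=\t r(x,\pi(x,\cdot))-\lambda\int_U\pi\ln\pi\,du,
\]
and then to get the two bounds separately: the sup bound from the probabilistic representation \eqref{2.2}, and the gradient bound from a coupling argument combined with the same scaling as in Lemma \ref{L.3.5}.

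\textbf{Sup bound.} Use \eqref{2.2} directly. Since $\pi(x,\cdot)$ is a probability density on $U$, we have $-\int_U\pi\ln\pi\,du\ge -\ln\|\pi\|_\infty$ and $-\int_U\pi\ln\pi\le \ln|U|$ by Jensen. Together with $|\t r|\le\|r\|_\infty$, this gives $\|f_\pi\|_\infty\le \|r\|_\infty+\lambda\max(\ln\|\pi\|_\infty,\ln|U|)$. Integrating against $e^{-\beta t}$ yields $\|V[\pi]\|_\infty\le \|f_\pi\|_\infty/\beta$. This is the stated estimate, with the $\|r\|_\infty$ contribution absorbed (the displayed bound is best read as $(\|r\|_\infty+\lambda\ln\|\pi\|_\infty)/\beta$ up to constants). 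Note that $f_\pi$ is only bounded and measurable in $x$, with $\|f_\pi\|_\infty\le C$, where $C$ depends on $\|r\|_\infty$ and $\lambda_0\|\int\pi\ln\pi\|_\infty$.

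\textbf{Gradient bound.} The drift $\t b(x,\pi(x,\cdot))$ is bounded by $\|b\|_\infty$ but need not be Lipschitz in $x$, so I would not differentiate the equation. Instead I would invoke interior $W^{2,p}$ (Calderón--Zygmund / Krylov--Safonov type) estimates for linear uniformly elliptic equations with $C^1$ diffusion coefficient $\Sigma\ge\lammin\I_d$ and bounded measurable drift and source. On unit balls these give
\[
\|W\|_{W^{2,p}(B_1(x_0))}\le C\big(\|W\|_{L^\infty(B_2(x_0))}+\|f_\pi\|_\infty\big)
\]
for all $p<\infty$, and taking $p>d$, Sobolev embedding gives $\|\nabla W\|_\infty\le C$. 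For the $\beta$-dependence I would repeat the rescaling $w(x)=V[\pi](x/\sqrt\beta)$ from Lemma \ref{L.3.5}. The rescaled equation has zeroth-order coefficient $1$, drift $\t b/\sqrt\beta$ with non-increasing bounds, diffusion $\Sigma(x/\sqrt\beta)$ with $C^1$ norm non-increasing, and source $f_\pi/\beta$. Hence $\|w\|_{W^{2,p}}\le C/\beta$, so $\|\nabla w\|_\infty\le C/\beta$, and therefore $\|\nabla V[\pi]\|_\infty=\sqrt\beta\|\nabla w\|_\infty\le C/\sqrt\beta$.

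\textbf{Existence and identification.} To know that the bounded viscosity/strong solution coincides with \eqref{2.2}, I would approximate $\pi$ by Lipschitz mollifications $\pi_\epsilon$, for which SDE well-posedness and Itô's formula apply. The bounds above are uniform in $\epsilon$, and I would pass to the limit via stability of the martingale problem (uniform ellipticity ensures weak uniqueness with measurable drift). This identification is the step I expect to be the main obstacle, since the drift is only measurable. If needed, I would instead simply assume $\pi$ is Lipschitz in $x$ throughout, as the paper does when defining $X^\pi$.

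\textbf{$C^2$ regularity.} If $\pi$ is Lipschitz in $x$, then $\t b(\cdot,\pi)$ and $f_\pi$ are Lipschitz, hence $C^\alpha$, because $\pi\ln\pi$ is Lipschitz on the bounded range of $\pi$ away from the logarithmic singularity: $s\mapsto s\ln s$ is Hölder continuous on $[0,\|\pi\|_\infty]$. Classical Schauder theory, using $\Sigma\in C^{2,\alpha}$, then gives $V[\pi]\in C^{2,\alpha}\subset C^2$.
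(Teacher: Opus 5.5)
Your proposal is correct and follows the paper's route: the linear equation for $V[\pi]$, a sup bound via comparison (your probabilistic representation is equivalent), a uniform gradient bound from classical linear elliptic regularity with bounded measurable drift and source (your $W^{2,p}$-plus-Sobolev argument stands in for the paper's citation of a $C^{1,\alpha}$ estimate from Ladyzhenskaya--Ural'tseva), the rescaling $x\mapsto x/\sqrt{\beta}$ from Lemma \ref{L.3.5}, and Schauder theory for the $C^2$ claim. You were right to read the sup bound as $(\|r\|_\infty+\lambda\max(\ln\|\pi\|_\infty,\ln|U|))/\beta$, since the displayed bound omits $\|r\|_\infty$ and fails as written (e.g.\ $r\equiv 1$ and $\pi$ uniform on $U$ with $|U|=1$ give $V[\pi]\equiv 1/\beta$), and your observation that $s\mapsto s\ln s$ is only H\"older, not Lipschitz, near $0$ slightly corrects the paper's $C^2$ step in a way Schauder theory tolerates.
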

\begin{proof}
It follows from dynamic programming that $V=V[\pi]$ satisfies
\[
\begin{aligned}
&-\beta {V} (x) +  \tilde r(x,\pi) +\tilde  b(x,\pi) \cdot \nabla {V} (x) + \frac{1}{2}\tr(\Sigma(x)\nabla^2{V} (x)) - \lambda \int_{ U}  \pi(x,u)\ln \pi(x,u) du = 0.
\end{aligned}
\]
The comparison principle immediately yields that $\|V\|_\infty \leq \lambda \ln\|\pi\|_\infty/\beta$.

{Since $\Sigma(\cdot)\in C^1$}, $\tilde r(\cdot,\pi),\tilde b(\cdot,\pi)$ and $\pi(\cdot, u) $ are bounded, then by the classical regularity theory for elliptic equations (see e.g., \cite[Theorem 15.1]{ladyzhenskaia1968linear}), $V$ is uniformly bounded in $C^{1,\alpha}$. Applying the scaling argument the same as in the proof of Lemma \ref{L.3.5} yields the second estimate.

If $\pi(\cdot,u)$ is Lipschitz continuous, then so are $\tilde r(\cdot,\pi),\tilde b(\cdot,\pi)$ and $\int_{ U}  \pi(\cdot,u)\ln \pi(\cdot,u) du$. It follows from the  classical theory for elliptic equations that $V\in C^{2}$ .
\end{proof}

\section{Error estimates}

One of the major goal of the section is to estimate the difference between the two value functions from \eqref{2.2} and \eqref{eq:regVdis}, with a fixed bounded feedback randomized policy.

\begin{theorem}[Discretization error for fixed policy]\lb{P.3.3}
Assume {\rm (H1)(H2)}.
For $h\in (0,\frac12)$ and a given feedback randomized policy $\pi$, let $V[{\pi}]$ and $V_h[{\pi}]$ be defined by \eqref{2.2} and \eqref{eq:regVdis}, respectively. Further assume that $V[\pi](\cdot)\in C^2$ and
\[
\|{\pi}(\cdot,\cdot)\|_\infty\leq L_1,\,\quad \|\nabla V[{\pi} ](\cdot)\|_\infty\leq L_2,\quad \text{ for some $L_1,L_2\geq 2$}.
\]
Then there exists $C>0$ depending only on $d,b$ and $\Sigma$ such that
\[
\left\|V[{\pi} ]-V_h[{\pi}]\right\|_\infty\leq C(L_1L_2+\lambda L_1\ln L_1)h|\ln h|.
\]
\end{theorem}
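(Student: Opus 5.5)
The plan is to compare $V[\pi]$ and $V_h[\pi]$ through a one-step consistency error that is then propagated along the sampled chain. The derivation below suggests that the $|\ln h|$ factor comes from summing $1/(ih)$ over steps. Write
\[
f(x):=\tilde r(x,\pi(x,\cdot))-\lambda\int_U\pi(x,u)\ln\pi(x,u)\,du,
\]
so that $\|f\|_\infty\le C(1+\lambda\ln L_1)$. As in the proof of Lemma \ref{L.2.8}, $V=V[\pi]$ solves
\[
-\beta V+f+\tilde b(\cdot,\pi)\cdot\nabla V+\tfrac12\tr(\Sigma D^2V)=0 .
\]
Also, $V_h[\pi]$ is the fixed point of the linear operator
\[
T_\pi W(x)=hf(x)+\gamma\int_U\int_{\R^d}W(y)\rho_h(y|x,u)\,dy\,\pi(x,u)\,du ,
\]
which is a $\gamma$-contraction. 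Rather than the crude bound $\|T_\pi V-V\|_\infty/(1-\gamma)$, which would require a local error of order $h^2$ that we cannot get without regularity of $\pi$ in $x$, I would iterate exactly:
\[
V-V_h[\pi]=\sum_{i\ge0}\gamma^i\,\E\big[(V-T_\pi V)(Y^\pi_{ih})\big].
\]

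\textbf{Step 1 (local error).} Fix $x$ and a frozen $u$, and apply Itô's formula (this uses $V\in C^2$) to $e^{-\beta s}V(Y_s)$ on $[0,h]$, where $dY=b(Y,u)ds+\sigma dB$. Using the PDE to eliminate $\tfrac12\tr(\Sigma D^2V)(Y_s)$ and averaging over $u\sim\pi(x,\cdot)$, the one-step error becomes
\[
V(x)-T_\pi V(x)=\E\int_0^h e^{-\beta s}\Big[g(Y_s)-g(x)+\big(b(Y_s,u)-b(x,u)\big)\cdot\nabla V(Y_s)\Big]ds+O(h^2)\|f\|_\infty,
\]
where $g:=f+\tilde b(\cdot,\pi)\cdot\nabla V$. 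Here $\|g\|_\infty\le C(L_2+\lambda\ln L_1)$, and the $O(h^2)$ term absorbs the mismatch between $h$ and $\int_0^h e^{-\beta s}ds$. The term with $b(Y_s,u)-b(x,u)$ is harmless: by the Lipschitz bound on $b$ it is at most $C L_2\,\E|Y_s-x|\le CL_2\sqrt s$, giving $O(L_2h^{3/2})$ per step and hence $O(L_2h^{1/2})$ after summation. I would sharpen this to $O(h)$ either by applying the same density-difference argument of Step 2 to the function $b(\cdot,u)\cdot\nabla V$, or by noting that its average over $u$ has size $O(s)$ after the martingale part is removed. The function $g$, however, is only bounded (we have no Lipschitz control on $\pi$), so $g(Y_s)-g(x)$ cannot be estimated pointwise.

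\textbf{Step 2 (summing via densities).} Substituting Step 1 into the series, the dangerous part is
\[
\sum_{i\ge0}\gamma^i\int_0^h\Big(\E\,g(Y^\pi_{ih+s})-\E\,g(Y^\pi_{ih})\Big)ds ,
\]
where $Y^\pi$ is the sampled process \eqref{2.3} and $Y^\pi_{ih+s}$ is its position inside the $(i+1)$-th step. The $i=0$ term is at most $2h\|g\|_\infty$. For $i\ge1$, I would bound
\[
\big|\E\,g(Y^\pi_{ih+s})-\E\,g(Y^\pi_{ih})\big|\le\|g\|_\infty\,\|p_{ih+s}-p_{ih}\|_{L^1},
\]
where $p_t$ is the density of $Y^\pi_t$. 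The goal is the parabolic-type estimate
\[
\|p_{t+s}-p_t\|_{L^1}\le C\,\frac{s}{t}\,L_1 \qquad (t\ge h,\ 0\le s\le h).
\]
Granting this, the sum is at most
\[
\|g\|_\infty\sum_{i=1}^{\infty}\gamma^i\int_0^h\frac{CL_1 s}{ih}\,ds\le C\|g\|_\infty L_1\,h\sum_{i=1}^{\infty}\frac{\gamma^i}{i}\le C\|g\|_\infty L_1\,h|\ln h|,
\]
since $\sum_i\gamma^i/i=-\ln(1-\gamma)\approx|\ln(\beta h)|$. This yields exactly $C(L_1L_2+\lambda L_1\ln L_1)h|\ln h|$.

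\textbf{Main obstacle.} The key difficulty is proving the $L^1$ time-regularity estimate for $p_t$. The sampled process is not a diffusion with a fixed generator: on each step its drift is $b(\cdot,\nu_{ih})$ with $\nu_{ih}$ random and frozen. Here I would use the relaxed-control structure the introduction alludes to. Conditionally on $Y^\pi_{ih}$, the law on $[ih,(i+1)h]$ is the mixture $\int_U\rho_{\cdot}(\cdot|Y^\pi_{ih},u)\pi(Y^\pi_{ih},u)\,du$, and each $\rho(\cdot|y,u)$ solves the uniformly parabolic Fokker--Planck equation \eqref{def of rho_h} with bounded Lipschitz drift $b(\cdot,u)$. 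I would write
\[
p_{t+s}-p_t=\int_t^{t+s}\partial_\tau p_\tau\,d\tau
\]
and transfer the time derivative onto the heat-kernel-type transition density from time $0$ to time $\tau$. Uniform ellipticity together with Gaussian bounds $|\partial_\tau\rho_\tau|\lesssim\tau^{-1}\times(\text{Gaussian})$ for each fixed $u$ should then give $\|\partial_\tau p_\tau\|_{L^1}\lesssim 1/\tau$. The $\|\pi\|_\infty\le L_1$ bound enters because the mixture is compared against $|U|$ times the uniform-in-$u$ kernel estimates. Chaining the per-step kernels over many steps without the constants accumulating is the delicate point. I would try to handle it by a semigroup/Duhamel comparison with a single reference uniformly parabolic operator, treating the drift difference as a bounded perturbation.
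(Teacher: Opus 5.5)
Your skeleton is the paper's. Your series $\sum_i\gamma^i\mathbb{E}[(V-T_\pi V)(Y^\pi_{ih})]$ is the paper's $-\sum_i A_i$ up to an $O(h)$ quadrature error. Itô's formula with the frozen action, together with the PDE, removes $D^2V$. What remains is controlled by $L^1$ time increments of the law of $Y^\pi$, and $\sum_i 1/i$ produces $|\ln h|$. Two steps fail, though.

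\textbf{The one-step identity is wrong.} It should read
\[
V(x)-T_\pi V(x)=\mathbb{E}\int_0^h e^{-\beta s}\Big[g(Y_s)-g(x)-\big(b(Y_s,u)\cdot\nabla V(Y_s)-b(x,u)\cdot\nabla V(x)\big)\Big]\,ds+O(h^2)\|f\|_\infty .
\]
So besides the sign, you have lost the term $b(x,u)\cdot(\nabla V(Y_s)-\nabla V(x))$. This term is not controlled by the Lipschitz bound on $b$, since only $V\in C^2$ is assumed and there is no quantitative bound on $D^2V$. Once summed, it is an expectation of a function of $(Y^\pi_{ih},\nu_{ih},Y^\pi_{ih+s})$, not of $Y^\pi_{ih+s}$ alone, so your marginal-density Step 2 does not reach it. It is the paper's $D_{i,b,2}$. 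Treating it requires the time increment of $\int q_u(z)\rho_s(\cdot|z,u)\,dz$ with $q_u=p_{ih}\,\pi(\cdot,u)$, hence regularity of $\pi(\cdot,u)$ in $z$. Separately, $\|p_{t+s}-p_t\|_{L^1}\lesssim s/t$ is too strong for large $t$, where the drift alone gives order $s/\sqrt t$. With discounting, $O(s)$ for $t\ge 2$ suffices, which is what the paper uses.

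\textbf{The density estimate you grant in Step 2 does not follow from $\|\pi\|_\infty\le L_1$, and your Duhamel plan cannot produce it.} Replacing $(P^u_s)^*$ by a reference semigroup costs order $\sqrt s$ in $L^1$. Because the weights $p_{ih}(z)\pi(z,u)$ have no regularity in $z$, nothing upgrades this to $s/t$.

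In fact the estimate appears false at this generality; the following is a linearized sketch. Take $d=1$, $\sigma\equiv 1$, and a smoothed (Lipschitz) policy cycling through three actions with drifts $1,0,0$ on consecutive stripes of width $\sqrt h$.
\begin{itemize}
\item Then $p_{(i+1)h}\approx G_h*p_{ih}-h\,\partial_y\big(G_h*(\tilde b\,p_{ih})\big)$ with $\tilde b$ oscillating at scale $\sqrt h$.
\item So $p_{ih}$ carries oscillations of relative size $\sqrt h$ at scale $\sqrt h$ for all $i$ past the first few, and $\|p_{ih+h/2}-p_{ih}\|_{L^1}$ stays of order $\sqrt h$.
\item Solving for the stationary laws on a period cell to first order, the chain's oscillation at frequency $k$ equals the diffusion's times $ae^{-a}/(1-e^{-a})$, with $a=hk^2/2$. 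This factor is far from $1$ at $k\sim h^{-1/2}$.
\item Take the reward to be the indicator of the stripe just downstream of the drifting one. This gives $|V[\pi]-V_h[\pi]|\approx c\sqrt h/\beta$ with $c>0$, even though $L_1,L_2=O(1)$; here $\|\nabla V[\pi]\|_\infty=O(\sqrt h)$.
\end{itemize}

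The paper fills this step by quoting Gaussian bounds for the Fokker--Planck equation with drift $b(\cdot,a_{ih})$, i.e.\ as if the actions were deterministic. This ignores that $a_{ih}\sim\pi(Y^\pi_{ih},\cdot)$, so the paper has the same hole. A correct statement must let a modulus of continuity of $\pi$ in $x$ enter the constant. That is harmless for $\pi^*$ but delicate for $\pi_h^*$, whose Lipschitz bound in Proposition~\ref{prop:gradpi} is of order $h^{-1/2}$.
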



\begin{proof}
Let us write $V(x)=V[\pi](x)$ for simplicity and recall the process $Y_t^\pi$ from \eqref{2.3}.
We start from It\^{o}'s formula applied to $V(Y^{\pi}_t)$ with discount factor $e^{-\beta t}$. For $h\in (0,1)$ and $i\in\N_0$, we have
\begin{align*}
e^{-\beta h} V(Y^{\pi}_{(i+1)h}) 
&= V(Y_{ih}^{\pi}) + \int_{ih}^{(i+1)h} e^{-\beta (t-ih)}\Big[-\beta  V(Y^{\pi}_t)+ b(Y^{\pi}_t,a_i)\cdot \nabla V(Y^{\pi}_t) \\
&\quad +  \frac{1}{2} \Sigma(Y^{\pi}_t):D^2 V(Y^{\pi}_t) \Big]\,dt+\int_{ih}^{(i+1)h} e^{-\beta (t-ih)}\sigma(Y_t^{\pi}) \cdot \nabla V(Y_t^{\pi})\, dB_t,
\end{align*}
where $a_i(\omega)=\varphi(Y_{ih}^{\pi},\xi^i(\omega))$. 
Then taking expectations gives
\beq\lb{7.2}
\begin{aligned}
\mathbb{E}\!\left[e^{-\beta  h} V(Y^{\pi}_{(i+1)h})\right]
&= \E\left[V(Y^{\pi}_{ih})\right] + \mathbb{E} \int_{ih}^{(i+1)h} e^{-\beta (t-ih)} 
   \Big[ -\beta V(Y^{\pi}_t) +  b(Y^{\pi}_t,a_i)\cdot \nabla V(Y^{\pi}_t) \\
&\qquad + \frac{1}{2} \Sigma(Y^{\pi}_t) :D^2 V(Y^{\pi}_t) \Big]\,dt.
\end{aligned}
\eeq

We divide the rest of the proof into several steps.

\medskip

\noindent{\bf Step 1.} In this step, we provide several useful estimates for the p.d.f for the stochastic process $Y_t^\pi$. Let $\rho(t,y)=\rho_{0,x}(t,y)$ satisfy $\rho(0,y) = \delta_x(y)$ for some fixed $x\in\R^d$, and for $t\in [ih,(i+1)h]$,
it satisfies the Fokker-Planck equation
\begin{equation*}
    \partial_t \rho(t,y) = \nabla\cdot\l[-b(y,a_{ih})\rho(t,y) + \frac12\nabla\cdot\l(\sigma(y)\sigma(y)^\top\rho(t,y)\r)\r].
\end{equation*}
Since $b(y,\cdot)$ and $\sigma(y)$ are assumed to be uniformly bounded in $C^{2,\alpha}$ in $y$, it follows from \cite[Theorem 2.12]{menozzi2021density} and its Remark 2.13 (see also \cite{Friedman-book-06,stroock1997multidimensional}) that for all $t\in (0,2]$: there exist $C\geq 1$ and $c\in (0,1]$ depending only on $d$ and the constants in (H2) such that
\begin{align}
\lb{den1'}
C^{-1} K(t, x-y) 
&\le \rho(t,y)
\le C K(t,x-y),  \\ 
\lb{den2'}
|\nabla_y^{\, j} \rho(t,y)| 
&\le C t^{-j/2} K(t, x-y), 
\qquad j = 1,2,
\end{align}
where $
K(t,x) := t^{-d/2} \exp\!\left( - {c|x|^2}/{t} \right)$.

Direct computation yields
\[
\rho(t,y)-\rho(ih,y)=\int_{ih}^t \rho_t(s,y)ds=\int_{ih}^t b(y,a_{ih}) \cdot\nb_x \rho+ \frac12\Sigma(y) : \nb^2_x\rho\, ds.
\]
Hence, we get for $t\in [ih,(i+1)h]$ and $h\leq ih\leq 2$,
\beq\lb{3.8}
\begin{aligned}
\int_{\R^d}|\rho(t,y)-\rho(ih,y)|dy&\leq \int_{\R^d}\int_{ih}^{(i+1)h}  \|b\|_\infty \|\nb_x \rho\|_\infty+ \frac12\|\Sigma\|_\infty\| \nb^2_x\rho\|_\infty dsdy\\
&\stackrel{\eqref{den2'}}\leq C\int_{\R^d}\int_{ih}^{(i+1)h}(\|b\|_\infty s^{-\tfrac12} +\|\Sigma\|_\infty s^{-1}) s^{-\tfrac{d}2} \exp\!\left( - \tfrac{c|x-y|^2}{s} \right)dsdy\\
&\leq C(\|b\|_\infty i^{-\tfrac12}h^{\tfrac12} +\|\Sigma\|_\infty i^{-1}).    
\end{aligned}
\eeq

To consider large $t\geq 2$, for any positive integer $k$, let $i_k:= \lceil k/h\rceil$ and so $i_kh\in [k,k+h)$. Recall the reproduction property in \cite[Theorem 1.4]{menozzi2021density} item 6. Then for all $t\geq i_kh$, we have
\beq\lb{3.5}
\rho(t,y)=\int_{\R^d}\rho(i_k h, z)\rho_{i_kh,z}(t,y) dz
\eeq
where $\rho_{i_kh,z}(t,y)$ satisfies for $i\geq i_k$ and then $t\in [ih, (i+1)h]$,
\begin{equation*}
    \partial_t \rho_{i_kh,z}(t,y) = \nabla\cdot\l[-b(y,a_{ih})\rho_{i_kh,z}(t,y) + \frac12\nabla\cdot\l(\sigma(y)\sigma(y)^\top\rho_{i_kh,z}(t,y)\r)\r]
\end{equation*}
with $\rho_{i_kh,z}(i_kh,y)=\delta_z(y)$.

Now suppose $t\in [k+1,k+2]$ for some $k\geq 1$, then $t-i_k h\in [1-h,2]$. It follows from \eqref{den2'} that
\begin{equation}\lb{3.6}
|\nabla_y^{\, j} \rho_{i_kh,z}(t,y)| 
\;\le\; C (t-i_kh)^{-j/2} K(t-i_kh, z-y)\leq C  K(t-i_kh, z-y), 
\qquad j = 1,2.
\end{equation}
Also using that $\rho(i_kh,\cdot)$ is a probability density function, for $j=1,2$ and $t\in [k+1,k+2]$, we have 
\begin{align*}
\int_{\R^d}|\nabla^j\rho(t,y)|dy&\stackrel{\eqref{3.5}}\leq \iint_{\R^{2d}}\rho(i_kh,z) |\nabla^j \rho_{i_kh,z}(t,y)|dzdy\\
&\stackrel{\eqref{3.6}}\leq C\int_{\R^d}\rho(i_kh,z) \left[\int_{\R^d}K(t-i_kh,z-y)dy\right] dz\leq C,    
\end{align*}
where $C$ is independent of $k$. So, the estimate holds uniformly for all $t\geq 2$.

Finally, it follows that for $t\in [ih,(i+1)h]$ with $ih\geq 2$, we have
\beq\lb{3.8'}
\begin{aligned}
\int_{\R^d}|\rho(t,y)-\rho(ih,y)|dy&\leq \int_{\R^d}\int_{ih}^{(i+1)h}  \|b\|_\infty \|\nb_x \rho\|_\infty+ \frac12\|\Sigma\|_\infty\| \nb^2_x\rho\|_\infty dsdy\\
&\leq C(\|b\|_\infty+\|\Sigma\|_\infty) \int_{ih}^{(i+1)h}\int_{\R^d}|\nabla_x\rho(s,y)|+|\nabla^2_x\rho(s,y)|dyds\\
&\leq C(\|b\|_\infty+\|\Sigma\|_\infty)h .    
\end{aligned}
\eeq

\smallskip

\noindent{\bf Step 2.} In this step we aim to bound
\[
D_{i,f}(t):=\E\l[\int_ U f(Y^{\pi}_{ih},u)- f(Y^{\pi}_t,u) du\r]
\]
where $t\in [ih, (i+1)h]$ and $f(x,u)$ is a bounded function. 
First, direct computation yields
\beq\lb{D01}
D_{i,f}(t)\leq 2\|f\|_\infty.
\eeq

Now, we seek for a more accurate bound.
If $i\in [1,2/h]$ and  $t\in [ih,(i+1)h]$, we have
\beq\lb{D02}
\begin{aligned}
 |D_{i,f}(t)|&= \l| \iint_{\R^d\times U} f (y,u) (\rho(t,y)-\rho(ih,y))dy du\r|\\
 &\leq \|f\|_\infty \iint_{\R^d\times U}|\rho(t,y)-\rho(ih,y)| dy du\\
  &\stackrel{\eqref{3.8}}\leq C \|f\|_\infty  (\|b\|_\infty i^{-\tfrac12}h^{\tfrac12} +\|\Sigma\|_\infty i^{-1}).
\end{aligned}
\eeq
If $i\geq 2/h$ and  $t\in [ih,(i+1)h]$,
\beq\lb{D03}
\begin{aligned}
|D_{i,f}(t)|&\leq  \|f\|_\infty \iint_{\R^d\times U}|\rho(t,y)-\rho(ih,y)| dy du\\
&\stackrel{\eqref{3.8'}}\leq C \|f\|_\infty  (\|b\|_\infty  +\|\Sigma\|_\infty )h.
\end{aligned}
\eeq

Now, we set
\begin{align*}
D_{i,\pi}(t):=\E\l[\lambda\int_ U {\pi}(Y_t^{\pi},u)\ln {\pi}(Y_t^{\pi},u)du-\lambda\int_ U {\pi}(Y_{ih}^{\pi},u)\ln {\pi}(Y_{ih}^{\pi},u)du\r],
\end{align*}
and then
\[
D_{i,\pi}(t)=\E\l[\int_ U \lambda{\pi}(y,u)\ln {\pi}(y,u)(\rho(t,y)-\rho(ih,y))dydu\r].
\]
Let us take $f(x,u)= \lambda\pi(x,u)\ln \pi(x,u)$, and since $\pi\leq L_1$ and $L_1\geq 2$, we have $\|f\|_\infty\leq \lambda L_1\ln L_1$. We get from \eqref{D01}--\eqref{D03} that for $t\in [ih,(i+1)h]$,
\beq\lb{Dpi}
\begin{aligned}
|D_{i,\pi}(t)|\leq 
\begin{cases}
    2\lambda L_1\ln L_1, & i=0,\\
  C\lambda L_1\ln L_1 \Big(\|b\|_\infty i^{-\tfrac12}h^{\tfrac12} +\|\Sigma\|_\infty i^{-1}\Big), & i\in [1,2/h],\\
  C \lambda L_1\ln L_1 (\|b\|_\infty  +\|\Sigma\|_\infty )h,& i> 2/h.  
\end{cases}
\end{aligned}
\eeq

Next, we define
\[
D_{i,r}(t):=\E\l[ r(Y^{\pi}_{ih},a_{ih})-\tilde r(Y^{\pi}_t,{\pi})\r].
\]
By the definition, $\xi^i$ is independent of $Y_{ih}^{\pi}$ and $\omega\to u=a_i(\omega)$ has the distribution density of ${\pi}(Y_{ih}^{\pi},u)$. Therefore
\begin{align*}
D_{i,r}(t)&=\E\l[\int_ U r(Y^{\pi}_{ih},u)\pi(Y^{\pi}_{ih},u)- r(Y^{\pi}_t,u) \pi(Y^{\pi}_t,u)du\r]\\
&=\E\l[\int_ U r(y,u) {\pi}(y,u)(\rho(t,y)-\rho(ih,y))dydu\r].    
\end{align*}
Setting $f(x,u)=r(x,u)\pi(x,u)$, then $\|f\|_\infty\leq L_1\|r\|_\infty$. It follows from \eqref{D01}--\eqref{D03} that for $t\in [ih,(i+1)h]$,
\beq\lb{Dr}
\begin{aligned}
|D_{i,r}(t)|\leq 
\begin{cases}
    2L_1\|r\|_\infty, & i=0,\\
  CL_1\|r\|_\infty \Big(\|b\|_\infty i^{-\tfrac12}h^{\tfrac12} +\|\Sigma\|_\infty i^{-1}\Big), & i\in [1,2/h],\\
  C L_1\|r\|_\infty (\|b\|_\infty  +\|\Sigma\|_\infty )h,& i> 2/h.  
\end{cases}
\end{aligned}
\eeq

\noindent{\bf Step 3.}
The goal is to estimate 
\[
D_{i,b}(t):=\E[ b(Y^{\pi}_t,a_i)\cdot \nabla V(Y^{\pi}_t)]-\E[\tilde b(Y^{\pi}_t,{\pi})\cdot \nabla V(Y^{\pi}_t)].
\]
Firstly, since $b$ and $\nabla V$ are uniformly finite, we have a trivial bound
\[
|D_{i,b}(t)|\leq 2L_2\|b\|_\infty.
\]

To obtain a more accurate estimate, we define
\[
D_{i,b,1}(t):= \E\l[ \tilde b(Y^{\pi}_t,{\pi})\cdot \nabla V(Y^{\pi}_t)-\tilde b(Y^{\pi}_{ih},{\pi})\cdot \nabla V(Y^{\pi}_{ih})\r],
\]
\[
 D_{i,b,2}(t):=\E\l[ b(Y^{\pi}_t,a_i)\cdot \nabla V(Y^{\pi}_t)-\tilde b(Y^{\pi}_{ih},{\pi})\cdot \nabla V(Y^{\pi}_{ih})\r].
\]
If $i\in [1,2/h]$ and  $t\in [ih,(i+1)h]$, we have from \eqref{D02} that
\[
\begin{aligned}
 |D_{i,b,1}(t)|
  &= \l| \iint b(y,u)\pi(y,u)\nabla V(y) (\rho(t,y)-\rho(ih,y))dy du\r|\\
   &\leq CL_1L_2\|b\|_\infty   (\|b\|_\infty i^{-\tfrac12}h^{\tfrac12} +\|\Sigma\|_\infty i^{-1}).
\end{aligned}
\]
While if $i\geq 2/h$, we use \eqref{D03} to get
\[
\begin{aligned}
 |D_{i,b,1}(t)| 
  \leq C L_1L_2\|b\|_\infty     (\|b\|_\infty  +\|\Sigma\|_\infty )h.
\end{aligned}
\]

Next, similarly as before, since $\xi^i$ is independent of $Y_{ih}^{\pi}$ and $\omega\to u=a_i(\omega)$ has the distribution density of ${\pi}(Y_{ih}^{\pi},u)$, we get
\begin{align*}
 D_{i,b,2}(t) 
  &=  \E\l[ b(Y^{\pi}_t,a_i)\cdot \nabla V(Y^{\pi}_t)-b(Y^{\pi}_{ih},a_i)\cdot \nabla V(Y^{\pi}_{ih})\r]\\
  &=\iint b(y,a_i)\nabla V(y) (\rho(t,y)-\rho(ih,y))dy du.
\end{align*}
Then, it follows from \eqref{D02} and \eqref{D03} that for $i\in [1,2/h]$ and  $t\in [ih,(i+1)h]$,
\[
\begin{aligned}
 |D_{i,b,2}(t)| \leq C\|b\|_\infty L_1 (\|b\|_\infty i^{-\tfrac12}h^{\tfrac12} +\|\Sigma\|_\infty i^{-1}),
\end{aligned}
\]
and for $i\geq 2/h$, 
\[
\begin{aligned}
 |D_{i,b,2}(t)| \leq C \|b\|_\infty L_1 (\|b\|_\infty  +\|\Sigma\|_\infty )h.
\end{aligned}
\]

Overall, combining the above estimates yields for  $t\in [ih,(i+1)h]$,
\beq\lb{3.11}
\begin{aligned}
|D_{i,b}(t)|&\leq |D_{i,b,1}(t)|+|D_{i,b,2}(t)|\\
&\leq 
\begin{cases}
    2L_2\|b\|_\infty, & i=0,\\
  CL_1L_2\|b\|_\infty   (\|b\|_\infty i^{-\tfrac12}h^{\tfrac12} +\|\Sigma\|_\infty i^{-1}), & i\in [1,2/h],\\
  C L_1L_2\|b\|_\infty  (\|b\|_\infty  +\|\Sigma\|_\infty )h,& i> 2/h.  
\end{cases}.
\end{aligned}
\eeq

\medskip

\noindent{\bf Step 4.} We complete the proof in this step.
Let us denote
\[
\begin{aligned}
A_i &:=
e^{-\beta (i+1) h}\mathbb{E}\!\left[ V(Y^{\pi}_{(i+1)h})\right]
- e^{-\beta ih}\E[ V(Y^{\pi}_{ih})] \\
&+ \mathbb{E} \int_{ih}^{(i+1)h}  e^{-\beta t}\left[
    \tilde r(Y^{\pi}_{ih},{\pi})-\lambda\int_ U {\pi}(Y^{\pi}_{ih},u) \ln {\pi}(Y^{\pi}_{ih},u) \right]dudt.
\end{aligned}
\]
Since $V$ satisfies the following HJB equation, 
\[
-\beta V(Y^{\pi}_t)+  \tilde b(Y^{\pi}_t,{\pi})\cdot \nabla V(Y^{\pi}_t)  + \frac{1}{2} \Sigma(Y^{\pi}_t) :D^2 V(Y^{\pi}_t)=-\tilde r(Y^{\pi}_t,{\pi})+\lambda\int_ U  {\pi}(Y^{\pi}_t,u)\ln {\pi}(Y^{\pi}_t,u) du,
\]
we can write
\[
\begin{aligned}
A_i &=
e^{-\beta (i+1)h}\mathbb{E}\!\left[ V(Y^{\pi}_{(i+1)h})\right]
- e^{-\beta ih}\E[ V(Y^{\pi}_{ih})] \\
&+\mathbb{E} \int_{ih}^{(i+1)h}  
  e^{-\beta t} \lambda  \l[ \int_ U  {\pi}(Y^{\pi}_t,u)\ln {\pi}(Y^{\pi}_t,u) du-\int_ U  {\pi}(Y^{\pi}_{ih},u) \ln {\pi}(Y^{\pi}_{ih},u) \r]dudt\\
 &  - \mathbb{E} \int_{ih}^{(i+1)h} e^{-\beta t} 
   \Big[ -\beta V(Y^{\pi}_t)+  \tilde b(Y^{\pi}_t,{\pi})\cdot \nabla V(Y^{\pi}_t)  + \frac{1}{2} \Sigma(Y^{\pi}_t) :D^2 V(Y^{\pi}_t) \Big]\,dt\\
&+\mathbb{E} \int_{ih}^{(i+1)h}  e^{-\beta t}\left[
    \tilde r(Y^{\pi}_{ih},{\pi})-\tilde r(Y^{\pi}_{t},{\pi})\right]dt.
\end{aligned}
\]
Applying \eqref{7.2} to the above yields
\begin{align*}
A_i&=\mathbb{E} \int_{ih}^{(i+1)h}  
  e^{-\beta t} \lambda  \l[ \int_ U  {\pi}(Y^{\pi}_t,u)\ln {\pi}(Y^{\pi}_t,u) du-\int_ U  {\pi}(Y^{\pi}_{ih},u) \ln {\pi}(Y^{\pi}_{ih},u) \r]dudt\\
  &+\mathbb{E} \int_{ih}^{(i+1)h} e^{-\beta t} 
  \Big[ b(Y^{\pi}_t,a_i)- \tilde b(Y^{\pi}_t,{\pi})\Big]\cdot \nabla V(Y^{\pi}_t)  \,dt+\mathbb{E} \int_{ih}^{(i+1)h}  e^{-\beta t}\left[
    \tilde r(Y^{\pi}_{ih},{\pi})-\tilde r(Y^{\pi}_{t},{\pi})\right]dt\\
  &=\mathbb{E} \int_{ih}^{(i+1)h}  
  e^{-\beta t} \left[D_{i,\pi}(t)+D_{i,b}(t)+D_{i,r}(t)\right]dt  .    
\end{align*}

Therefore, we get from the estimates in the previous steps (\eqref{Dpi}, \eqref{Dr}, \eqref{3.11}):
\beq\lb{7.4}
\begin{aligned}
|A_i|\leq 
\begin{cases}
   2 (L_2\|b\|_\infty +\lambda L_1\ln L_1+L_1\|r\|_\infty)h, & i=0,\\
  C(L_1L_2\|b\|_\infty  +\lambda L_1\ln L_1+L_1\|r\|_\infty) (\|b\|_\infty i^{-\tfrac12}h^{\tfrac12} +\|\Sigma\|_\infty i^{-1}), & i\in [1,2/h],\\
  Ce^{-\beta ih}(L_1L_2\|b\|_\infty  +\lambda L_1\ln L_1+L_1\|r\|_\infty) (\|b\|_\infty  +\|\Sigma\|_\infty )h,& i> 2/h.  
\end{cases}
\end{aligned}
\eeq
Hence, summing up $A_i$ for $i=0,1,\ldots$ yields
\beq\lb{3.22}
\left|\sum_{i=0}^\infty A_i\right|\leq C_*\left[h+C\int_h^{2} (ht^{-\tfrac{1}{2}}+ht^{-1}) dt+C\int_2^\infty e^{-\beta t}h\, dt\r]\leq C_*h|\ln h|,
\eeq
where 
\[
C_*:=C(L_1L_2+\lambda L_1\ln L_1)
\]
with $C$ depending only on $d$, $b$ and $\Sigma$.

Finally, note that
\[
\sum_{i=0}^\infty A_i=-\E[V(Y_0^\pi)]+\sum_{i=0}^\infty\int_{ih}^{(i+1)h} e^{-\beta t} 
   \Big[  \tilde r(Y^{\pi}_t,{\pi})-\lambda\int_ U  {\pi}(Y^{\pi}_{ih},u) \ln {\pi}(Y^{\pi}_{ih},u) du\Big]dt.
\]
Recall that $Y_0^{\pi}=x$ (so $\E[V(Y_0^\pi)]=V[\pi](x)$). 
Thus, also using
\[
\sum_{i=0}^\infty\int_{ih}^{(i+1)h} \left|e^{-\beta t} -e^{-\beta ih}\right|
   \Big|  \tilde r(Y^{\pi}_t,{\pi})-\lambda\int_ U  {\pi}(Y^{\pi}_{ih},u) \ln {\pi}(Y^{\pi}_{ih},u) du\Big|dt\leq C(\|r\|_\infty+\lambda L_1\ln L_1)h,
\]
we then obtain from \eqref{eq:regVdis} and \eqref{3.22} that
\[
\left|V[{\pi}](x)-V_h[{\pi}](x)\right|\leq C_*h|\ln h|.
\]
\end{proof}

In the following corollary,  we plug-in the optimal randomized feedback policy from the MDP to the continuous time problem. Then the error w.r.t. the optimal value function of the MDP is $h|\log h|$. 
The same estimate is derived if we plug in the optimal control obtained from the continuous-time problem to the discrete-time problem, making the control piece-wise constant in time and the value function is a sum over discrete-time.

Recall $V$ and $\pi^*$ in \eqref{2.2} and \eqref{eq:regfeedbackC}, and recall $V_h$ and $\pi_h^*$ from Proposition \ref{T.2.3}. We have  $V=V[\pi^* ]$ and $V_h(x)=V_h[\pi_h^*](x)$.

\begin{corollary}[Discretization error for value functions]\lb{C.3.4}
Assume {\rm (H1)(H2)}, $h\in(0,\tfrac12)$, and 
\beq\lb{cond00}
C_d\|\nabla\sigma\|_\infty h^\frac12\leq \frac12\quad\text{and}\quad\beta \geq 1 + 2\|\nabla b\|_\infty + \frac{\|\nabla \Sigma\|^2_\infty}{4\lambda_{\min}}
\eeq
for some dimensional constant $C_d$.
Then there exists $C>0$ such that
\[
\left\|V[ \pi^*_h ](\cdot)-V_h(\cdot)\right\|_\infty\leq C(1+\| \pi^*_h\|_\infty) h|\ln h|\leq C\exp(\tfrac{C}\lambda)h|\ln h|,
\]    
and
\[
\left\|V(\cdot)-V_h[\pi^*](\cdot)\right\|_\infty\leq C(1+\|\pi^*\|_\infty) h|\ln h|\leq C\exp(\tfrac{C}{\lambda})h|\ln h|.
\] 
The constant $C$ here depends only on $d$, {\rm (H1)(H2)} and an upper bound of $\lambda>0$.
\end{corollary}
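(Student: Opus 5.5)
The plan is to apply Theorem \ref{P.3.3} twice: once with $\pi=\pi^*_h$ and once with $\pi=\pi^*$. For each application we verify the hypotheses $\|\pi\|_\infty\le L_1$, $V[\pi]\in C^2$ and $\|\nabla V[\pi]\|_\infty\le L_2$, with $L_2$ independent of $h$ and $\lambda$. Then we use the identities $V_h=V_h[\pi^*_h]$ and $V=V[\pi^*]$.

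\emph{First estimate.} Take $\pi=\pi^*_h$.

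1. By Lemma \ref{L.2.4}, which applies because of the first condition in \eqref{cond00}, we have $\|\pi^*_h\|_\infty\le \exp\big((2\|r\|_\infty+16\|b\|_\infty\|\nabla V_h\|_\infty)/\lambda\big)$.

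2. By the remark following Lemma \ref{L.2.4} (Lemma \ref{lemma:bd of v vx}), the second condition in \eqref{cond00} gives $\|\nabla V_h\|_\infty\le e^h\|\nabla r\|_\infty\le C$, uniformly in $h\in(0,\tfrac12)$. Hence $L_1:=\max(2,\|\pi^*_h\|_\infty)\le \exp(C/\lambda)$.

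3. Since $\pi^*_h$ is bounded, Lemma \ref{L.2.8} gives $\|\nabla V[\pi^*_h]\|_\infty\le C/\sqrt\beta\le C$. The constant there depends on $\lambda_0\|\int_U\pi\ln\pi\,du\|_\infty$. We control this by noting $\int_U\pi\ln\pi\,du\in[-\ln|U|,\ln L_1]$, so $\lambda\ln L_1\le C$ uniformly in $\lambda\in(0,\lambda_0)$. So $L_2$ is a constant independent of $\lambda$ and $h$.

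4. For the $C^2$ requirement, Lemma \ref{L.2.8} needs $\pi^*_h$ to be Lipschitz in $x$. From formula \eqref{dis-opt-control}, the $x$-dependence enters through $r(x,u)$ and through $x\mapsto\int V_h(y)\rho_h(y|x,u)\,dy$, which is smooth in $x$ for fixed $h>0$ by the heat-kernel bounds \eqref{den2'}. The $O(h^{-1/2})$ size of this Lipschitz constant is harmless, because Theorem \ref{P.3.3} uses only qualitative $C^2$ regularity. If needed, one can mollify $\pi^*_h$ in $x$ and pass to the limit, since both sides of the estimate depend continuously on $\pi$ in $L^\infty$.

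5. Theorem \ref{P.3.3} then yields
\[
\|V[\pi^*_h]-V_h\|_\infty\le C(L_1L_2+\lambda L_1\ln L_1)h|\ln h|\le C(1+\|\pi^*_h\|_\infty)h|\ln h|.
\]
Here $\lambda\ln L_1\le C$ is absorbed. Together with step 2 this gives the bound $C\exp(C/\lambda)h|\ln h|$.

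\emph{Second estimate.} Take $\pi=\pi^*$.

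1. Lemma \ref{L.3.5} gives $V=V[\pi^*]\in C^{2,\alpha}$ and $\|\nabla V\|_\infty\le C/\sqrt\beta$, so $L_2=C$.

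2. By \eqref{eq:regfeedbackC}, $\|\pi^*\|_\infty\le |U|^{-1}\exp\big(2(\|r\|_\infty+\|b\|_\infty\|\nabla V\|_\infty)/\lambda\big)\le \exp(C/\lambda)$, and $\lambda\ln\|\pi^*\|_\infty\le C$.

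3. Theorem \ref{P.3.3} gives $\|V-V_h[\pi^*]\|_\infty\le C(1+\|\pi^*\|_\infty)h|\ln h|\le C\exp(C/\lambda)h|\ln h|$.

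\emph{Main obstacle.} The delicate step is keeping $L_2$ and the term $\lambda L_1\ln L_1$ from carrying extra powers of $\exp(C/\lambda)$ or of $h^{-1}$. This relies on two facts: the uniform-in-$h$ bound on $\nabla V_h$ from Lemma \ref{lemma:bd of v vx}, which is why the lower bound on $\beta$ is needed, and the absorption $\lambda\ln L_1\le C$. A secondary point is justifying the qualitative $C^2$ regularity of $V[\pi^*_h]$, handled in step 4 of the first estimate.
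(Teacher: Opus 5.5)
Your proposal is correct and follows the paper's proof. Like the paper, you apply Theorem~\ref{P.3.3} once with $\pi=\pi^*_h$, using Lemma~\ref{L.2.4}, the uniform gradient bound of Lemma~\ref{lemma:bd of v vx}, the absorption $\lambda\ln\|\pi^*_h\|_\infty\le C$ and Lemma~\ref{L.2.8}, and once with $\pi=\pi^*$, using Lemma~\ref{L.3.5}, together with $V_h=V_h[\pi^*_h]$ and $V=V[\pi^*]$.

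Your step 4 additionally checks the qualitative $C^2$ hypothesis of Theorem~\ref{P.3.3}, which the paper leaves implicit. Two small corrections there. The Lipschitz continuity of $x\mapsto\int V_h(y)\rho_h(y|x,u)\,dy$ comes from the $x$-derivative bounds \eqref{den2} or Lemma~\ref{lemma:bd}, not from \eqref{den2'}, which concerns $y$-derivatives. Alternatively, Proposition~\ref{prop:gradpi} gives the Lipschitz continuity of $\pi^*_h$ directly.
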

\begin{proof}
Note that Lemma  \ref {L.2.4} (and its remark) and Lemma \ref{L.2.8} show that the conditions of Theorem \ref{P.3.3} hold with $\pi= \pi^*_h$. Indeed, we have
\[
        \ll  \pi^*_h(x,u) \rl_\infty \leq \exp\l(C/{\lambda}\r).
\]
And then since
\[
\left|\lambda \int_{ U}   \pi^*_h(x,u)\ln  \pi^*_h(x,u) du\right|\leq C\lambda+\lambda \ln\left\| \pi^*_h(x,u) \right\|_\infty\leq C\lambda+C,
\]
Lemma \ref{L.2.8} yields $\|\nabla V[ \pi^*_h ](\cdot)\|\leq C$ where $C$ depends on the assumptions and an upper bound of $\lambda$.
Because $V_h=V_h[ \pi^*_h]$, we conclude the proof for the first estimate by applying Theorem \ref{P.3.3}.

Similarly,  by Lemma \ref{L.3.5} and \eqref{eq:regfeedbackC}, there exists $C$ independent of $\lambda\in (0,\lambda_0)$ such that 
$\|\nabla V\|_\infty\leq C$ and $\|\pi^*\|_\infty\leq \exp(C/\lambda)$.
Then the second estimate follows from Theorem \ref{P.3.3}. 
\end{proof}

In the following result, we show that the optimal feedback control for the discrete relaxed control problem is close to the one in the continuous setting.

\begin{theorem}[Discretization error for optimal feedback controls]\lb{T.3.4}
Under the assumptions of Corollary \ref{C.3.4}, there exists a constant $C > 0$, depending on the same constants as in the corollary, such that
\begin{align*}
V[ \pi^*_h ](\cdot)&\leq V(\cdot)\leq  
V[ \pi^*_h ](\cdot)+C\exp(\tfrac{C}{\lambda}) h|\ln h|,\\
V_h[\pi^*](\cdot)&\leq V_h(\cdot)\leq V_h[\pi^*](\cdot)+C\exp(\tfrac{C}{\lambda}) h|\ln h|,\\
|V(\cdot)- V_h(\cdot)|&\leq C\exp(\tfrac{C}{\lambda}) h|\ln h|.
\end{align*}
Actually, in all the above estimates, we can replace $C\exp(\tfrac{C}{\lambda})$ by $C(1+\|\pi^*\|_\infty+\| \pi^*_h\|_\infty)$.
\end{theorem}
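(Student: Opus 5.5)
The plan is to combine the two estimates of Corollary \ref{C.3.4} with the optimality of $V$ and $V_h$ over feedback randomized policies. The lower bounds in the first two lines come from optimality alone. Since $V$ is the supremum in \eqref{eq:regV} over admissible relaxed controls, and the feedback policy $\pi^*_h$ generates an admissible relaxed control $\varpi_t=\pi^*_h(X_t,\cdot)$, we get $V[\pi^*_h]\le V$. Here we use that $\pi^*_h$ is bounded (Lemma \ref{L.2.4}) and Lipschitz in $x$, so that the SDE is well posed; alternatively we can invoke the verification argument for \eqref{main} together with $V=V[\pi^*]$. Similarly, $V_h=\sup_\pi V_h[\pi]$ by \eqref{2.16} and Proposition \ref{T.2.3}, which gives $V_h[\pi^*]\le V_h$.

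The key step is the third inequality, which we prove directly by a sandwich. First,
\[
V_h=V_h[\pi^*_h]\le V[\pi^*_h]+\|V[\pi^*_h]-V_h\|_\infty\le V+C(1+\|\pi^*_h\|_\infty)h|\ln h|,
\]
using the first estimate of Corollary \ref{C.3.4} and $V[\pi^*_h]\le V$. Second,
\[
V=V[\pi^*]\le V_h[\pi^*]+\|V-V_h[\pi^*]\|_\infty\le V_h+C(1+\|\pi^*\|_\infty)h|\ln h|,
\]
using the second estimate of the corollary and $V_h[\pi^*]\le V_h$. Together these give $|V-V_h|\le C(1+\|\pi^*\|_\infty+\|\pi^*_h\|_\infty)h|\ln h|$.

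With the third inequality in hand, the upper bounds in the first two lines follow. For the first line, $V\le V_h+|V-V_h|\le V[\pi^*_h]+\|V[\pi^*_h]-V_h\|_\infty+|V-V_h|$, and both error terms are already controlled. The second line is symmetric: $V_h\le V+|V-V_h|\le V_h[\pi^*]+\|V-V_h[\pi^*]\|_\infty+|V-V_h|$. Finally, we replace $C(1+\|\pi^*\|_\infty+\|\pi^*_h\|_\infty)$ by $C\exp(C/\lambda)$ using Lemma \ref{L.3.5} and Lemma \ref{L.2.4}, together with the remark after Lemma \ref{L.2.4}, which bounds $\|\nabla V_h\|_\infty$ under \eqref{cond00}.

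The only delicate point is justifying $V[\pi^*_h]\le V$, namely that a feedback policy is admissible for the supremum in \eqref{eq:regV} in the sense of \cite{TZZ}. I would verify this with a verification argument. Since $V\in C^{2,\alpha}$ solves \eqref{eq:HJBreg}, for any fixed $\uppi$ the integrand is at most $\beta V-\frac12\tr(\Sigma\nabla^2V)$. Applying It\^o's formula to $e^{-\beta t}V(X^{\pi^*_h}_t)$ and using the boundedness of $V$ and $\nabla V$ together with $\beta>0$, we obtain $V\ge V[\pi^*_h]$. This avoids any admissibility issue.
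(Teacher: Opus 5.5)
Your proof is correct and follows the paper's argument: both combine the two estimates of Corollary \ref{C.3.4} with the optimality inequalities $V[\pi^*_h]\le V$ and $V_h[\pi^*]\le V_h$. The paper arranges these as the single chain $V[\pi^*_h]\ge V_h-C_*h|\ln h|\ge V_h[\pi^*]-C_*h|\ln h|\ge V-2C_*h|\ln h|$, with $C_*=C\exp(C/\lambda)$, and reads off all three lines from it, whereas you prove the third line first and then deduce the upper bounds, which is only a reordering. Your It\^o-formula verification of $V[\pi^*_h]\le V$ using \eqref{eq:HJBreg} is a sound extra detail; the paper simply takes this inequality from the definition of $V$ as a supremum in \eqref{eq:regV}.
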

The first inequality tells us the following ``error''. If we obtain the optimal control from the RL approximation, and then plug it back to the true continuous-time problem, the error we expect in terms of the value function.

The second estimate is useful, for instance, when our goal is to solve an RL problem by approximating it with a PDE. It shows that if we apply the optimal control from the continuous-time problem to the discrete-time formulation, the resulting error with respect to the true optimal value function in RL can be quantified. 
\begin{proof}
For the first estimate, notice that
\[
V(x)=\sup_{\pi}V[\pi ](x)=V[\pi^* ](x)
\]
by \eqref{eq:regV}.
Therefore, it follows from the definition that
\[
V(x)\geq V[\pi^*_h  ](x).
\]

Next, write $C_*:=C\exp(\tfrac{C}{\lambda})$. We apply Corollary \ref{C.3.4} to get that
\[
|V[\pi^*_h  ](x)-V_h(x)|\leq C_* h|\ln h|,
\]
and Corollary \ref{C.3.4} to get that
\[
\left|V(x)-V_h[\pi^*](x)\right|\leq  C_* h|\ln h|.
\]
By the definition of $ \pi^*_h$, we have
\[
V_h=V_h(\pi^*_h)\geq V_h[\pi^*].
\]
The three estimates yield
\beq\lb{333}
V[\pi^*_h  ]\geq V_h(\pi^*_h)-C_* h|\ln h|\geq  V_h[\pi^*]\geq V(\cdot)-2C_*h|\ln h|.
\eeq

The second claim follows similarly.
For the last estimate, \eqref{333} yields
\[
V(\cdot)\leq V_h(\cdot)+C_* h|\ln h|.
\]
The other direction follows from a similar argument. 

Finally, the statement of Corollary \ref{C.3.4}  yields that we can replace $C_*$ by $C(1+\|\pi^*\|_\infty+\| \pi^*_h\|_\infty)$. 
\end{proof}

\section{Further regularity properties of MDP}


In this section, we provide several regularity estimates for MDP.

For a fixed $u$ $\in$ $U$, recall that $\rho_t(y|x,u)$ is defined in \eqref{def of rho_h}.  
For any bounded function $W:\R^d\to\R$ and $t\geq 0$, define
\[
(P_t^u W(\cdot))(x) := \int_{\R^d} W(y) \rho_t(y|x,u) dy.
\]
Since $\rho_t(\cdot|x,u)$ is a probability distribution, for any bounded $W$,
\beq\lb{P.0}
|P_t^u W(x)|\leq \|W\|_\infty.
\eeq
By the definition of $P_t^u$ and that $b,\sigma$ are time-independent, we know that $f(x,t):=P_t^uW (x)$ satisfies the following forward Kolmogorov equation \cite{pavliotis2016stochastic},
\beq\lb{feqn}
    \pt_t f(x,t) =\mL_{b,\Sig}f(x,t), \quad \text{with }f(x,0) = W(x), 
\eeq
and
\beq\lb{mL}
\mL_{b,\Sig} := b(x,u) \cdot\nb_x + \frac12\Sigma(x) : \nb^2_x.
\eeq

The next two lemmas are concerned with the regularity of solutions to \eqref{feqn} with possibly a source term. We aim to provide a quantitative tracking of the constants involved.

\begin{lemma}[Estimates for \eqref{feqn}]\label{lemma:bd}
Assume {\rm (H2)}. Let $W$ be a uniformly bounded and Lipschitz continuous function on $\R^d$ and let $f$ be as the above. For any $t\geq 0$, we have
\[
   \ll \nb_x f(x,t) \rl_\infty \leq e^{A_0t} \ll \nb_x W \rl_\infty, \quad A_0: = 2\ll \nb b \rl_\Linf  + \tfrac{\ll \nb\Sig \rl^2_\Linf}{4\lammin}.
\]
\end{lemma}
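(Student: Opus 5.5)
We estimate the gradient of $f$ with a Bernstein-type argument. Differentiate \eqref{feqn} in $x$, then apply the maximum principle to the energy $w(x,t):=\tfrac12|\nabla_x f(x,t)|^2$. Write $\partial_k$ for $\partial_{x_k}$ and $p=\nabla_x f$. Applying $\partial_k$ to $\partial_t f=\mathcal L_{b,\Sigma}f$ gives
\[
\partial_t p_k=\mathcal L_{b,\Sigma}p_k+\partial_k b\cdot p+\tfrac12\partial_k\Sigma:\nabla^2 f .
\]
Multiplying by $p_k$ and summing over $k$ yields
\[
\partial_t w=\mathcal L_{b,\Sigma}w-\tfrac12\sum_k \Sigma:(\nabla p_k\otimes\nabla p_k)+\sum_k p_k\,\partial_k b\cdot p+\tfrac12\sum_k p_k\,\partial_k\Sigma:\nabla^2 f .
\]
The ellipticity in (H2) gives
\[
\tfrac12\sum_k \Sigma:(\nabla p_k\otimes\nabla p_k)\ge \tfrac{\lambda_{\min}}2|\nabla^2 f|^2 .
\]
The drift term is bounded by $\|\nabla b\|_\infty|p|^2=2\|\nabla b\|_\infty w$. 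For the last term we use Young's inequality:
\[
\tfrac12|p|\,\|\nabla\Sigma\|_\infty|\nabla^2 f|\le \tfrac{\lambda_{\min}}2|\nabla^2 f|^2+\tfrac{\|\nabla\Sigma\|_\infty^2}{8\lambda_{\min}}|p|^2 .
\]
The second-order piece is absorbed by the good term. Collecting terms, $w$ is a subsolution:
\[
\partial_t w\le \mathcal L_{b,\Sigma}w+2A_0\,w,\qquad 2A_0=4\|\nabla b\|_\infty+\tfrac{\|\nabla\Sigma\|_\infty^2}{2\lambda_{\min}},
\]
which matches the constant $A_0$ in the statement. The function $\bar w(x,t)=e^{2A_0t}\tfrac12\|\nabla W\|_\infty^2$ is a supersolution with $\bar w(\cdot,0)\ge w(\cdot,0)$. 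Comparison on $\mathbb R^d\times[0,T]$ then gives $|\nabla f|\le e^{A_0t}\|\nabla W\|_\infty$.

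The main obstacle is rigor, not the computation. The argument needs enough smoothness of $f$ to differentiate twice (third derivatives appear in $\partial_t p$), and $W$ is only Lipschitz. We handle this by mollifying $W$ to $W_\varepsilon$, which keeps $\|\nabla W_\varepsilon\|_\infty\le\|\nabla W\|_\infty$. By (H2), $b$ and $\Sigma$ are $C^{2,\alpha}$, so parabolic Schauder theory makes $f_\varepsilon$ smooth enough for $t>0$, with derivatives bounded on compact time intervals. We then pass to the limit using $|P_t^u(W_\varepsilon-W)|\le\|W_\varepsilon-W\|_\infty$ from \eqref{P.0}.

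The maximum principle must also be justified on the unbounded domain $\mathbb R^d$. We apply it to
\[
e^{-2A_0t}w-\tfrac12\|\nabla W\|_\infty^2-\delta\big(|x|^2+Kt\big),
\]
where $K$ is chosen large, depending on $\|b\|_\infty$ and $\|\Sigma\|_\infty$, so that the penalty is a strict supersolution correction. Boundedness of $w$ on $[0,T]$ forces a maximum to exist, and at a positive interior maximum we get a contradiction; we then let $\delta\to0$.

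As an alternative that avoids third derivatives, we could use a probabilistic coupling. Run two copies of the diffusion from $x$ and $x'$ driven by the same Brownian motion. Itô's formula for $|X_t-X'_t|^2$ gives $\mathbb E|X_t-X'_t|^2\le e^{(2\|\nabla b\|_\infty+\|\nabla\sigma\|^2_\infty)t}|x-x'|^2$, which yields a Lipschitz bound of the same type. The PDE route reproduces the exact constant involving $\lambda_{\min}$, so we follow it.
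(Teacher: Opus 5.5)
Your argument is the same Bernstein-type computation as the paper's. You differentiate \eqref{feqn}, derive the inequality for $\tfrac12|\nabla_x f|^2$, absorb the $\partial_k\Sigma:\nabla^2 f$ term into the ellipticity term via Young, and conclude by comparison, while your mollification and penalization supply rigor the paper leaves implicit. Two small points. Your collected coefficient is actually $A_0$, not $2A_0$, so you get the sharper factor $e^{A_0t/2}$. And since $\mathcal{L}_{b,\Sigma}|x|^2=2b\cdot x+\tr\Sigma$ grows linearly, no $K$ depending only on $\|b\|_\infty,\|\Sigma\|_\infty$ makes $\delta(|x|^2+Kt)$ a strict supersolution. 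Use $\delta\big(\sqrt{1+|x|^2}+Kt\big)$, whose gradient and Hessian are bounded, or $\delta e^{Kt}(1+|x|^2)$ instead.
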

\begin{proof}
Let $q_l: = \pt_{x_l}f$ for $l=1,\ldots,d$ and $q=(q_l)$ and write $\ll q \rl_\infty^2 = \sum_{l=1}^d (\pt_{x_l}f(x,t))^2$. Due to \eqref{feqn}, $q_l$ satisfies   
\[
\pt_t q_l = \mL_{b,\Sig}q_l + \mL_{\pt_{x_l}b,\pt_{x_l}\Sig} (f), \quad \text{with }q_l(0,x) = \pt_{x_l}W(x).
\]
Multiplying $q_l$ to the above equation and then summing it over $l$ give
\[
\begin{aligned}
\pt_t (\frac12 \ll q \rl_\infty^2) &= \mL_{b,\Sig} (\frac12  \ll q \rl_\infty^2) - \frac12\sum_l (\nb q_l)^\top\Sig (\nb q_l) + q^\top \nb b \cdot q + \sum_l\frac12(\pt_{x_l}\Sig:\nb q) q_l\\
&\leq \mL_{b,\Sig} (\frac12  \ll q \rl_\infty^2) - \frac\lammin2\ll\nb q\rl_\infty^2  + \ll \nb b \rl_\infty\ll q \rl_\infty^2  + \frac12\ll \nb\Sig \rl_\infty\ll\nb q\rl_\infty\ll q \rl_\infty\\
&\leq \mL_{b,\Sig} (\frac12  \ll q \rl_\infty^2)   + \l(\ll \nb b \rl_\Linf  + \frac{\ll \nb\Sig \rl^2_\Linf}{8\lammin}+\frac12\r)\ll q \rl_\infty^2. 
\end{aligned}
\]
Then for 
\[
A_0=2\ll \nb b \rl_\Linf  + \frac{\ll \nb\Sig \rl^2_\Linf}{4\lammin}\quad{and}\quad w:=\frac{1}{2}\exp(-A_0t)\|q\|_\infty^2,
\]
we have
$
\pt_t w\leq \mL_{b,\Sig} w$.
Since $\frac{1}{2}\|\nabla W\|_\infty$ is a solution to $\pt_t v= \mL_{b,\Sig} v$ and $w(\cdot,0)\leq \frac{1}{2}\|\nabla W\|_\infty$, the comparison principle yields
\[
\frac{1}{2}\exp(-A_0t)\|q\|_\infty^2\leq \frac{1}{2}\|\nabla W\|_\infty,
\]
which shows the conclusion.
\end{proof}

The next goal is to obtain the derivative of $f$ with respect to $u$.

\begin{lemma}[Higher estimates for \eqref{feqn}]\label{lemma:bd of cond-exp}
Assume {\rm (H2)}. Let $g(x,t)$ satisfy
\beq\lb{feqn1}
    \pt_tg(x,t) =\mL_{b,\Sig}g(x,t)+R(x,t), \quad \text{with }g(x,0) = 0.
\eeq
Then there exists $C\geq 1$ depending only on $d,\lambda_{\min}$ and $M_1$ such that
for any $t\geq 0$ we have
\[
\|g(\cdot,t)\|_\infty\leq t\sup_{s\in [0,t]}\|R(\cdot,s)\|_\infty,\qquad \|\nabla_x g(\cdot,t)\|_\infty\leq C\sqrt{t}\sup_{s\in [0,t]}\|R(\cdot,s)\|_\infty.
\]

In particular, if letting $f^u(x,t):=(P_t^uW(\cdot))(x)$ for some bounded function $W$, then
\[
\|\nabla_u f^u(\cdot,t)\|_\infty\leq \|\nabla_u b\|_\infty t \sup_{s\in [0,t],u\in U}\|\nabla_x f^\mru(\cdot,s)\|_\infty,
\]
\[
\ \|\nabla_u\nabla_x f^u(\cdot,t)\|_\infty\leq C\|\nabla_u b\|_\infty\sqrt{t} \sup_{s\in [0,t],u\in U}\|\nabla_x f^\mru(\cdot,s)\|_\infty  .
\]
\end{lemma}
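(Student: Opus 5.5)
The plan is to treat \eqref{feqn1} through the Duhamel representation and transition-density bounds, and then to reduce the $u$-derivative statements to \eqref{feqn1} by differentiating \eqref{feqn} in $u$.

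\textbf{The two estimates for $g$.} Since $\mL_{b,\Sig}$ is time-independent and generates the semigroup $P^u_t$, the solution of \eqref{feqn1} is
\[
g(\cdot,t)=\int_0^t P^u_{t-s}R(\cdot,s)\,ds .
\]
This is a well-defined bounded solution, and it coincides with the unique one by the comparison principle; alternatively, the $L^\infty$ bound follows directly by comparing $g$ with $\pm s\sup\|R\|_\infty$. By \eqref{P.0}, $|P^u_{t-s}R(\cdot,s)|\le\|R(\cdot,s)\|_\infty$, and integrating in $s$ gives the first bound.

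For the gradient, differentiate under the integral in $x$:
\[
\nabla_x P^u_{\tau}R(x)=\int R(y)\,\nabla_x\rho_\tau(y|x,u)\,dy .
\]
This requires a Gaussian bound $|\nabla_x\rho_\tau(y|x,u)|\le C\tau^{-1/2}K(\tau,x-y)$ for $\tau\in(0,2]$. Here the derivative is in the backward (initial-point) variable, whereas \eqref{den2'} is stated in $y$. The same reference \cite[Theorem 2.12]{menozzi2021density} gives the derivative in $x$ under (H2), with $C$ depending only on $d$, $\lambda_{\min}$ and $M_1$. Integrating in $y$ then gives
\[
\|\nabla_x P^u_\tau R\|_\infty\le C\tau^{-1/2}\|R\|_\infty ,
\]
and hence
\[
\|\nabla_x g(\cdot,t)\|_\infty\le C\int_0^t (t-s)^{-1/2}\,ds\,\sup_{s}\|R(\cdot,s)\|_\infty = 2C\sqrt t\,\sup_s\|R(\cdot,s)\|_\infty
\]
for $t\le 2$.

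\textbf{Large $t$.} The Gaussian bound is only available for $\tau\le 2$, so for $\tau>2$ I would use the semigroup property $P^u_\tau=P^u_{1}P^u_{\tau-1}$. Then
\[
\|\nabla_x P^u_\tau R\|_\infty\le C\|P^u_{\tau-1}R\|_\infty\le C\|R\|_\infty\le C\tau^{-1/2}\cdot\sqrt{\tau}\,\|R\|_\infty .
\]
This gives a uniform constant $C$ rather than $C\tau^{-1/2}$, so for $t>2$ the integral is of order $Ct$ rather than $C\sqrt t$. To rescue the stated $\sqrt t$ for all $t$, I would use Lemma~\ref{lemma:bd} together with the ergodic-type decay of $\nabla_x P_\tau$. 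If that fails, I will note that only small $t=h$ is used in the applications, and restrict the claim to $t\le 2$ or absorb the growth into $C$.

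\textbf{The $u$-derivatives.} Differentiate \eqref{feqn} in $u$. Since $\Sigma$ does not depend on $u$, $g:=\nabla_u f^u$ satisfies \eqref{feqn1} with
\[
R=\nabla_u b(x,u)\cdot\nabla_x f^u,\qquad g(\cdot,0)=\nabla_u W=0 .
\]
Hence $\|R(\cdot,s)\|_\infty\le\|\nabla_u b\|_\infty\sup_{u}\|\nabla_x f^u(\cdot,s)\|_\infty$, and the first part yields both claimed bounds. To justify differentiating in $u$, I would apply the argument to the difference quotients $(f^{u+\epsilon e}-f^u)/\epsilon$, which solve \eqref{feqn1} with averaged sources, and pass to the limit.

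The main obstacle is the gradient smoothing estimate: obtaining the backward-variable density bound with constants depending only on $d$, $\lambda_{\min}$ and $M_1$, and handling large $t$.
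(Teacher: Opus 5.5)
Your plan is essentially the paper's proof. It uses Duhamel's formula $g(\cdot,t)=\int_0^t P^u_{t-s}R(\cdot,s)\,ds$ with \eqref{P.0} for the sup bound. For the gradient it uses the backward-variable Gaussian bound $|\nabla_x\rho_\tau(y|x,u)|\le C\tau^{-1/2}K(\tau,x-y)$ (which the paper states directly in $x$), integrated against $(t-s)^{-1/2}$. The $u$-derivatives are reduced to \eqref{feqn1} through $w=f^u-f^{u_1}$, whose source is $(b(\cdot,u)-b(\cdot,u_1))\cdot\nabla_x f^{u_1}$, exactly as in your computation; the paper's display writes $\nabla w$ there, but its bound uses $\nabla_x f^{v}$.

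Your large-$t$ worry is justified and your fallback is the right choice. The paper's proof also only uses the density bound for $t\in(0,1]$, and the $\sqrt t$ gradient estimate is actually false for large $t$: take $d=1$, $\Sigma\equiv 1$, $b=\tanh x$, $R=\tanh x$; since $\mL_{b,\Sig}\tanh=0$, one gets $g=t\tanh x$ and $\|\nabla_x g(\cdot,t)\|_\infty=t$. So drop the ergodic rescue and state the estimate for $t$ in a bounded range, which is all the later applications ($t=h\le 1$) need.
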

\begin{proof}
Since $b,\sigma$ are uniformly bounded and Lipschitz continuous by (H2), the classical results (see e.g., \cite{Friedman-book-06,menozzi2021density,stroock1997multidimensional}) yield that
the density function $\rho_t(y|x,u)$ satisfies the following estimates for all $t\in (0,1]$: there exist $C\geq 1$ and $c\in (0,1]$ depending only on $d$ and the constants in (H2) such that
\begin{equation}\lb{den2}
|\nabla_x^{\, j} \rho_t(y|x,u)| 
\;\le\; C t^{-j/2} K(t, x-y), 
\qquad j = 1,2,
\end{equation}
where
\begin{equation*}
K(x,t) := t^{-d/2} \exp\!\left( - {c|x|^2}/{t} \right),
\qquad \; t>0.
\end{equation*}
Consequently, for any bounded function $f$ we have
\beq\lb{P.1}
|\nabla_x (P_t^uf)(x)|\leq \int_{\R^d} f(y) |\nabla_x\rho_t(y|x,u)| dy\leq Ct^{-\frac12}\int_{\R^d} f(y) K(t,x-y) dy\leq Ct^{-\frac12}\|f\|_\infty.
\eeq
By Duhamel's principle, we have
$
g(\cdot,t) = \int_0^t P^u_{t-s}R(\cdot,s) ds$. Therefore, by \eqref{P.0},
\[
|  g(x,t) |= |\int_0^t  P^u_{t-s}R(x,s) ds|\leq \int_0^t \|R(\cdot,s)\|_\infty ds\leq t\sup_{s\in [0,t]}\|R(\cdot,s)\|_\infty,
\]
and by \eqref{P.1},
\[
|\nabla_x  g(x,t) |= |\int_0^t \nabla_x P^u_{t-s}R(x,s) ds|\leq C\int_0^t (t-s)^{-\frac12}\|R(\cdot,s)\|_\infty ds\leq C\sqrt{t}\sup_{s\in [0,t]}\|R(\cdot,s)\|_\infty.
\]

For the last claim, let us take $u_1\neq u$. Set $w(x,t):=f^u(x,t)-f^{u_1}(x,t)$, which then satisfies
\[
 \pt_tw(x,t) =\mL_{b,\Sig}w(x,t)+(b(x,u)-b(x,u_1))\cdot\nabla w(x,t), \quad \text{with }w(x,0) = 0.
\]
Here we used that $\Sigma$ is independent of $u$. It follows from the first two claims of the lemma  that
\begin{align*}
\| w(x,t)\|_\infty&\leq t\sup_{s\in [0,t]}\|(b(\cdot,u)-b(\cdot,u_1))\cdot\nabla w(\cdot,s)\|_\infty\\
&\leq 2t|u-u_1|\|\nabla_u b\|_\infty \sup_{s\in [0,t],v\in U}\|\nabla f^{v}(\cdot,s)\|_\infty ,  
\end{align*}
\begin{align*}
\|\nabla w(x,t)\|_\infty&\leq C\sqrt{t}\sup_{s\in [0,t]}\|(b(\cdot,u)-b(\cdot,u_1))\cdot\nabla w(\cdot,s)\|_\infty\\
&\leq 2C\sqrt{t} |u-u_1|\|\nabla_u b\|_\infty\sup_{s\in [0,t],v\in U}\|\nabla f^{v}(\cdot,s)\|_\infty ,  
\end{align*}
which finishes the proof.
\end{proof}

It is known that $V_h\in L^\infty$ from  Proposition \ref{T.2.3}. In the following result, we provide an explicit estimate and, moreover, we show that it is Lipschitz continuous.

\begin{lemma}[Regularity of $V_h$]\label{lemma:bd of v vx}
Under the assumptions of {\rm (H1)(H2)}, for all $h\in (0,1]$ 
we have
\[
\begin{aligned}
    &\ll V_h(\cdot) \rl_\infty \leq \frac{h\ll r \rl_\infty}{1-\exp(-\beta h)},
\end{aligned}
\]
and, if $\beta\geq 1+A_0$ with $A_0=2\ll \nb b \rl_\Linf  + \frac{\ll \nb\Sig \rl^2_\Linf}{4\lammin}$, we have
\[
\ll\nb_xV_h(\cdot) \rl_\infty\leq   e^h\ll \nb_x r \rl_\infty,
\]
and for some $C$ depending only on the assumptions,
\[
\| \nb^2_x V_h(\cdot)\|_\infty \leq Ch^{-1}.
\]
\end{lemma}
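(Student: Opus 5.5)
The plan is to work entirely through the fixed-point characterization $V_h=T^*V_h$ from Proposition \ref{T.2.3}, together with the explicit log-sum-exp form of $T^*$ in Lemma \ref{dis-operator}. The strategy is to show that each bound is (approximately) preserved by $T^*$, then run value iteration $W_{n+1}=T^*W_n$ from $W_0=0$. By Lemma \ref{dis-opt-contraction}, $W_n\to V_h$ uniformly, so the $L^\infty$ bound and the Lipschitz bound pass to the limit.

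\emph{Sup bound.} From \eqref{111}, for any $W$ and any density $\pi$,
\[
|T^*W(x)|\le h\|r\|_\infty+\lambda h\sup_\pi\Big|\int_U\pi\ln\pi\,du\Big|+\gamma\|W\|_\infty ,
\]
where the entropy is controlled by $\ln|U|$ from one side. With the normalization implicit in the statement (the entropy contribution absorbed, e.g.\ $|U|\le 1$, or the bound read up to an additive $\lambda h\ln|U|$ term), the fixed point satisfies $\|V_h\|_\infty\le h\|r\|_\infty+\gamma\|V_h\|_\infty$. Solving gives the first claim, since $1-\gamma=1-e^{-\beta h}$.

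\emph{Gradient bound.} Differentiate the formula in Lemma \ref{dis-operator}:
\[
\nabla_x T^*W(x)=\int_U \pi^W_h(u|x)\big[h\nabla_x r(x,u)+\gamma\nabla_x (P_h^uW)(x)\big]du .
\]
Since $\pi^W_h(\cdot|x)$ is a probability density, Lemma \ref{lemma:bd} gives
\[
\|\nabla T^*W\|_\infty\le h\|\nabla r\|_\infty+\gamma e^{A_0h}\|\nabla W\|_\infty .
\]
If $r$ is only Lipschitz, I would run the same argument with difference quotients instead of derivatives. When $\beta\ge 1+A_0$ we have $\gamma e^{A_0h}\le e^{-h}$. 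Iterating from $W_0=0$ then gives
\[
\|\nabla W_n\|_\infty\le \frac{h\|\nabla r\|_\infty}{1-e^{-h}}\le e^h\|\nabla r\|_\infty,
\]
using $1-e^{-h}\ge he^{-h}$. Lipschitz constants are lower semicontinuous under uniform convergence, so the bound transfers to $V_h$.

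\emph{Hessian bound.} Here I would not iterate. Instead I would differentiate $V_h=T^*V_h$ twice directly, which yields
\[
\nabla^2V_h=\int_U\pi^*_h\big[h\nabla^2r+\gamma\nabla^2P^u_hV_h\big]du+\frac{1}{\lambda h}\mathrm{Cov}_{u\sim\pi_h^*}\big(h\nabla r+\gamma\nabla P^u_hV_h\big).
\]
The terms are handled as follows.
\begin{itemize}
\item For the second term in the integral, one derivative falls on the kernel via \eqref{den2}, giving $\|\nabla^2P_h^uV_h\|_\infty\le Ch^{-1/2}\|\nabla V_h\|_\infty\le Ch^{-1/2}$.
\item For the covariance term, subtract the $u$-independent quantity $\nabla P_h^{u_0}V_h$ for a fixed $u_0\in U$. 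Lemma \ref{lemma:bd of cond-exp} together with the gradient bound gives $|\nabla P_h^uV_h-\nabla P_h^{u_0}V_h|\le C\sqrt h$ uniformly in $u$, so the covariance is $O((h+\sqrt h)^2)=O(h)$. After dividing by $\lambda h$ this term is $O(1/\lambda)$, which is within $Ch^{-1}$ for $\lambda$ bounded below, or with $C$ allowed to depend on $\lambda$.
\item The term $h\nabla^2 r$ is the delicate one, since (H2) only gives $r$ Lipschitz.
\end{itemize}

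The main obstacle is this last term, together with keeping the final bound at the stated order $h^{-1}$ rather than something $\lambda$-dependent. My first attempt would be to avoid differentiating $r$ twice at all: bound the Hessian by a second difference quotient of $T^*V_h$. The log-sum-exp map is convex and $1$-Lipschitz in its argument, and the contribution of $r$ to the second difference at scale $\delta$ is at most $2h\|\nabla r\|_\infty\delta$. Dividing by $\delta^2$ and choosing $\delta\sim h$ or $\delta\sim\sqrt h$ should produce the crude $Ch^{-1}$ rate claimed. If that fails, I would accept an extra mild regularity assumption on $r$ in $x$.
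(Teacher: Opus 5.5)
\textbf{Sup and gradient bounds.} These follow the paper's route. The paper also gets $\|V_h\|_\infty\le\|T^*0\|_\infty/(1-\gamma)$ from Lemma~\ref{dis-opt-contraction}. For the gradient it writes $\nabla_xT^*V=\mathbb{E}_{u\sim\pi}[\nabla_xQ]$ and applies Lemma~\ref{lemma:bd}. Your value iteration from $W_0=0$, passing the Lipschitz bound to the limit, is actually cleaner. The paper instead inserts $V_h$ into $\|\nabla V_h\|_\infty\le e^{(A_0-\beta)h}\|\nabla V_h\|_\infty+h\|\nabla_xr\|_\infty$ and rearranges, which presupposes $\|\nabla V_h\|_\infty<\infty$.

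One correction in your sup step: $\sup_\pi|\int_U\pi\ln\pi\,du|=\infty$, since you can concentrate $\pi$. Instead, bound $T^*W$ from above using the maximal entropy $\ln|U|$, and from below by testing the uniform density. This gives $|T^*W|\le h\|r\|_\infty+\lambda h\bigl|\ln|U|\bigr|+\gamma\|W\|_\infty$. Note that $|U|\le 1$ removes the extra term on one side only. The paper's claim $\|T^*0\|_\infty\le h\|r\|_\infty$ tacitly needs $|U|=1$.

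\textbf{The gap: the Hessian bound.} The problem is exactly the term you flagged.

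\emph{Your workaround does not work.} A second difference quotient at a single scale $\delta\sim h$ or $\delta\sim\sqrt h$ says nothing about $\|\nabla^2V_h\|_\infty$. That bound requires $|V_h(x+\delta e)+V_h(x-\delta e)-2V_h(x)|\le C\delta^2$ for all $\delta\to0$. The $r$-contribution $2h\|\nabla r\|_\infty\delta$ gives a quotient of order $h/\delta$, which blows up.

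\emph{No refinement can fix it, because the third claim is false under (H1)(H2) alone.} Take $r(x,u)=r_0(x)$ with $r_0$ bounded, Lipschitz and kinked. Then $T^*W=hr_0+\lambda h\ln\int_U\exp\bigl(\gamma P_h^uW/(\lambda h)\bigr)\,du$. By \eqref{den2}, the last term has bounded second derivatives for $h>0$. Hence $V_h-hr_0$ has bounded Hessian, and $V_h$ inherits the kink of $hr_0$.

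\emph{Your fallback is therefore the only route, and it is what the paper does implicitly.} The paper's proof uses $\|\nabla_x^2r\|_\infty$ and $\|\nabla_u\nabla_xr\|_\infty$, neither of which appears in (H2). Granting $\|\nabla_x^2r\|_\infty<\infty$, your decomposition closes, and it is in fact sharper than the paper's in two places:
\begin{itemize}
\item Your kernel bound $\|\nabla_x^2P_h^uV_h\|_\infty\le Ch^{-1/2}\|\nabla V_h\|_\infty$ improves the paper's $Ch^{-1}\|V_h\|_\infty$. Justify it by centering: $\nabla_x^2P_h^uV_h(x)=\int\nabla_x^2\rho_h(y|x,u)\,(V_h(y)-V_h(x))\,dy$, then apply \eqref{den2} with $j=2$. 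Your phrase ``one derivative falls on the kernel'' does not describe this mechanism.
\item Bounding the $u$-oscillation of $h\nabla_xr$ by $2h\|\nabla_xr\|_\infty$ avoids needing $\nabla_u\nabla_xr$.
\end{itemize}
You end with $\|\nabla^2V_h\|_\infty\le h\|\nabla_x^2r\|_\infty+Ch^{-1/2}+C\lambda^{-1}$.

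Do not try to remove the $\lambda^{-1}$ coming from the covariance term. The paper's own final inequality contains $\tfrac{C^2}{\lambda}\gamma^2\|\nabla_xV_h\|_\infty^2$, so its ``$C$'' also depends on a lower bound for $\lambda$. This is to be expected: as $\lambda\to0$, the log-sum-exp degenerates to a maximum over $u$.
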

\begin{proof}
By the contraction property from Lemma \ref{dis-opt-contraction}, we get
\[
\begin{aligned}
    \ll V_h \rl_\infty = \ll T^* V_h - T^* 0  +     T^*0 \rl_\infty \leq \ll T^* V_h - T^* 0 \rl +     \ll T^*0 \rl \leq \gamma \ll V_h  \rl + \ll T^*0\rl_\infty. 
\end{aligned}
\]
Applying Lemma \ref{dis-operator} and (H1), the above inequality leads to
\[
\begin{aligned}
    \ll V_h \rl_\infty \leq \frac1{1-\gamma} \ll T^*0\rl_\infty = \frac{\lam h}{1-\gamma} \ln  \int_{U}\exp\l(\frac{r(x,u)}{\lam } \r)du \leq \frac{ h \ll r \rl_\infty}{1-\gamma},  
\end{aligned}
\]
which gives the first inequality. 

For the second inequality, recall that $V_h$ is the unique fixed point of $T^*V=V$, and
\[
T^*V(x) =  \lambda h \ln \l[\int_{U} \exp\l(\frac{r(x,u)h + \gamma\int_{\R^d} V(y) \rho_h(y|x,u) dy}{\lambda h}\r) du\r].
\]
For any Lipschitz function $V$ and a fixed control $u$, denote
\[
Q(x,u) := r(x,u)h + \gamma \int_{\R^d} V(y)\rho_h(y|x,u) dy \quad\text{and}\quad \pi(x,u) :=\frac{\exp\l(\frac{Q(x,u)}{\lam h}\r)}{\int_{U} \exp\l(\frac{Q(x,u')}{\lam h}\r)  du'}.
\]
Then one has
\[
\begin{aligned}
     \nb_x (T^* V) &=  \frac{\lam h\nb_x\l[\int_{U} \exp\l(\frac{Q(x,u)}{\lam h}\r)  du \r]}{\int_{U} \exp\l(\frac{Q(x,u)}{\lam h}\r)  du}  
    = \E_{u\sim \pi(x,\cdot)}\l[ \nb_xQ(x,u) \r] \\
    &\leq   \ll \nb_x Q \rl_\infty \leq \ll \nb_xr\rl_\infty h +  \gamma  \ll\nabla_x  (P_h V) \rl_\infty\\
    &\leq \ll \nb_xr\rl_\infty h +  \gamma \exp\left( A_0h\right) \ll \nb_x V \rl_\infty
\end{aligned}
\]
where we applied Lemma \ref{lemma:bd} in the last inequality. 
Therefore, since $\gamma=e^{-\beta h}$, by approximating $V_h$ by smooth functions, we actually obtain
\[
\begin{aligned}
&\ll \nb V_h \rl_\infty = \ll \nb_x (T^* V_h)\rl_\infty
\leq \exp\left[\l(A_0-\beta\r)h\right]  \ll  \nb V_h \rl_\infty +  \ll \nb_x r \rl_\infty h 
\end{aligned}
\]
which, by the assumption that $\beta\geq1+A_0$,  implies that for $h\in (0,1]$,
\[
\begin{aligned}
&\ll \nb V_h \rl_\infty \leq \frac{h}{1-e^{(A_0-\beta )h}} \ll \nb_x r \rl_\infty\leq e^h\ll \nb_x r \rl_\infty .
\end{aligned}
\]

\def\V{\mathbb{V}}
Now, for any $V\in C^2(\R^d)$, direct computation yields
\[
\begin{aligned}
     \nb^2_x (T^* V) &= \E_{u\sim \pi(x,\cdot)}\l[ \nb^2_xQ(x,u) \r] + \frac{1}{\lam h}\l(\E_{u\sim \pi(x,\cdot)}[\nb_xQ(x,u)^2] - \l(\E_{u\sim \pi(x,\cdot)}[\nb_xQ(x,u)]\r)^2]\r) .
\end{aligned}
\]
Let us denote
\[
\V_{u\sim \pi(x,\cdot)}[\nb_xQ(x,u)]:=\E_{u\sim \pi(x,\cdot)}[\nb_xQ(x,u)^2] - \l(\E_{u\sim \pi(x,\cdot)}[\nb_xQ(x,u)]\r)^2].
\]
One has
\[
\ll \V_{u\sim \pi(x,\cdot)}[\nb_xQ(x,u)] \rl_\infty \leq \frac{|U|^2}4 \ll \nb_u\nb_xQ(x,u) \rl^2_\infty
\]
It follows from Lemma \ref{lemma:bd of cond-exp} that
\[
\ll \nb_u\nb_xQ(x,u) \rl_\infty = \ll  \nb_u\nb_x r  \rl_\infty h + \gamma \ll  \nb_u\nb_x (P_h^u V) \rl_\infty \leq C\l[h + \gamma\ll \nb_x V \rl_\infty  \sqrt{h}\r].
\]
Therefore, one has
\[
\ll \V_{u\sim \pi(x,\cdot)}[\nb_xQ(x,u)] \rl_\infty \leq  C^2\l[h^2 + \gamma^2\ll \nb_x V \rl_\infty^2 h\r].
\]

Furthermore, due to \eqref{den2},
\[
|\nabla_x^2 (P_h V)(x)|\leq \int_{\R^d} V(y) |\nabla^2_x\rho_t(y|x,u)| dy\leq Ct^{-1}\int_{\R^d} V(y) K(t,x-y) dy\leq Ct^{-1}\|V\|_\infty.
\]
Thus, we get
\[
\E_{u\sim \pi(x,\cdot)}\l[ \nb^2_xQ(x,u) \r]  \leq \ll \nb_x^2 r \rl_\infty h + \gamma \ll \nb_x^2 P_hV \rl_\infty \leq \ll \nb_x^2 r \rl_\infty h + Ch^{-1}\ll V \rl_\infty,
\]
which implies that
\[
 \ll \nb^2 T^* V \rl \leq \ll \nb_x^2 r \rl_\infty h + Ch^{-1}\ll V \rl_\infty  + \frac1\lam  C^2\l[h + \gamma^2\ll \nb_x V \rl_\infty^2  \r].
\]
Hence, since $ V_h =T^*V_h$, we get
\[
\ll \nb_x^2 V_h \rl_\infty \leq \l(\ll \nb_x^2 r \rl_\infty +  \frac1\lam C^2\r)h  +Ch^{-1}\ll V_h \rl_\infty+ \frac1\lam C^2\gamma^2\ll \nb_x V_h \rl_\infty^2. 
\]

\end{proof}

Lemma \ref{L.2.4} obtains an explicit $L^\infty$-bound for $ \pi^*_h$.
In the following
lemma, we show that $ \pi^*_h$ is uniformly Lipschitz continuous with Lipschitz constant of order $\tfrac{C}{\lambda\sqrt{h}}$.

\begin{proposition}[Regularity of $\pi_h^*$] \label{prop:gradpi}
    Under the assumptions of Lemma \ref{lemma:bd of v vx}, we have
    \[
    \begin{aligned}
        \ll  \nb_x (\ln \pi^*_h ) \rl_\infty \leq \frac{C}{\lam \sqrt{h}}   ( \|\nabla_u\nabla_x r\|_\infty \sqrt{h} + \|\nabla_ub\|_\infty\|\nabla_xr\|_\infty ).
    \end{aligned}
    \]
\end{proposition}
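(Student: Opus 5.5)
Starting from the explicit softmax form \eqref{dis-opt-control}, write $Q(x,u):=r(x,u)h+\gamma (P_h^uV_h)(x)$, so that
\[
\ln \pi^*_h(x,u)=\frac{Q(x,u)}{\lambda h}-\ln\int_U \exp\Big(\frac{Q(x,u')}{\lambda h}\Big)du'.
\]
Differentiating in $x$ gives
\[
\nabla_x\ln\pi^*_h(x,u)=\frac{1}{\lambda h}\Big(\nabla_xQ(x,u)-\E_{u'\sim\pi^*_h(x,\cdot)}[\nabla_xQ(x,u')]\Big).
\]
The point is that the gradient of $\ln\pi^*_h$ only sees the \emph{oscillation in $u$} of $\nabla_xQ$, not $\nabla_xQ$ itself. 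The latter is only bounded by $O(1)$ and would give the useless bound $O(1/(\lambda h))$. Since $\pi^*_h(x,\cdot)$ is a probability density on the compact set $U$, we get
\[
|\nabla_x\ln\pi^*_h(x,u)|\le \frac{1}{\lambda h}\sup_{u,u'\in U}|\nabla_xQ(x,u)-\nabla_xQ(x,u')|\le \frac{\mathrm{diam}(U)}{\lambda h}\,\|\nabla_u\nabla_xQ\|_\infty .
\]
(If $U$ is not convex, I would replace this by the difference-quotient version used in the proof of Lemma \ref{lemma:bd of cond-exp}, i.e. estimate $|\nabla_x(P^u_hV_h)-\nabla_x(P^{u_1}_hV_h)|$ directly by a constant times $|u-u_1|$, which only uses the Lipschitz dependence of $b$ on $u$.)

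It therefore remains to show that $\|\nabla_u\nabla_xQ\|_\infty\le C(\|\nabla_u\nabla_xr\|_\infty h+\|\nabla_ub\|_\infty\|\nabla_xr\|_\infty\sqrt h)$. The reward part contributes $h\|\nabla_u\nabla_x r\|_\infty$ directly. For the transition part, I would apply the last claim of Lemma \ref{lemma:bd of cond-exp} with $W=V_h$ and $t=h$:
\[
\|\nabla_u\nabla_xP^u_hV_h\|_\infty\le C\|\nabla_ub\|_\infty\sqrt h\sup_{s\in[0,h],v\in U}\|\nabla_xP^v_sV_h\|_\infty .
\]
Then I would use Lemma \ref{lemma:bd} to bound $\|\nabla_xP^v_sV_h\|_\infty\le e^{A_0 s}\|\nabla V_h\|_\infty\le e^{A_0}\|\nabla V_h\|_\infty$, and Lemma \ref{lemma:bd of v vx} (under $\beta\ge 1+A_0$) to get $\|\nabla V_h\|_\infty\le e^h\|\nabla_xr\|_\infty\le e\|\nabla_xr\|_\infty$. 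Using $\gamma\le 1$ and dividing by $\lambda h$ then yields
\[
\|\nabla_x\ln\pi^*_h\|_\infty\le \frac{C}{\lambda h}\Big(h\|\nabla_u\nabla_xr\|_\infty+\sqrt h\,\|\nabla_ub\|_\infty\|\nabla_xr\|_\infty\Big),
\]
which is exactly the claimed bound once the factor $\sqrt h$ is pulled out.

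The main obstacle is regularity. $V_h$ is only known to be Lipschitz, so we must justify differentiating $P^u_hV_h$ in $x$ and $u$, and the mixed derivative estimate in Lemma \ref{lemma:bd of cond-exp} requires a uniform bound on $\nabla_xf^v$. I would handle this as in the proof of Lemma \ref{lemma:bd of v vx}: approximate $V_h$ by smooth functions $V^\epsilon$ with $\|\nabla V^\epsilon\|_\infty\le\|\nabla V_h\|_\infty$ and $V^\epsilon\to V_h$ locally uniformly, prove the estimate for the corresponding $Q^\epsilon$ and $\pi^\epsilon$, and pass to the limit using the Gaussian bounds \eqref{den2}, which give $\nabla_xP^u_hV^\epsilon\to\nabla_xP^u_hV_h$ for fixed $h>0$. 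The constant $C$ then depends only on $d$, $\lambda_{\min}$, $M_1$, $A_0$ and $\mathrm{diam}(U)$.
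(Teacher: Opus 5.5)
Your proposal is correct and follows the paper's proof essentially step by step. Both use the softmax identity $\nabla_x\ln\pi_h^*=(\lambda h)^{-1}\bigl(\nabla_xQ^*-\E_{u\sim\pi_h^*}[\nabla_xQ^*]\bigr)$, bound the oscillation in $u$ by the mixed derivative $\nabla_u\nabla_xQ^*$, and get the $\sqrt h$ gain from Lemma~\ref{lemma:bd of cond-exp} with $W=V_h$, $t=h$, combined with Lemmas~\ref{lemma:bd} and \ref{lemma:bd of v vx}. Your constant $\mathrm{diam}(U)$ (where the paper writes $|U|$), the difference-quotient variant for non-convex $U$, and the explicit mollification argument for the limited regularity of $V_h$ are minor refinements of details the paper leaves implicit.
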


\begin{proof}
 Denote
\[
Q^*(x,u) := r(x,u) h + \gamma \int_{\R^d} V_h(y)\rho_h(y|x,u) dy
\]
\[
\text{and}\quad Z(x):= \int_{U}\exp\l(\frac{r(x,u)h + \int V_h(y) \rho_h(y|x,u) dy}{\lambda}\r)du.
\]
It follows from Lemma \ref{lemma:bd of cond-exp} that 
\[
\big\|\nabla_u\nabla_x \int_{\R^d} V_h(y)\rho_h(y|x,u) dy\big\|_\infty\leq C\|\nabla_u b\|_\infty\sqrt{h} \sup_{s\in [0,h],u\in U}\|\nabla_x P^\mru_s V_h(\cdot)\|_\infty  .
\]
By Lemma \ref{lemma:bd} and Lemma \ref{lemma:bd of v vx}, we have
\[
\sup_{s\in [0,h],u\in U}\|\nabla_x P^\mru_s V_h(\cdot)\|_\infty\leq e^{A_0h}\|\nabla_x  V_h(\cdot)\|_\infty\leq \frac{he^{A_0h}}{1-e^{(A_0-\beta )h}} \ll \nb_x r \rl_\infty.
\]
So, we obtain
\beq\lb{Q8}
\|\nabla_u\nabla_xQ^*\|_\infty\leq \|\nabla_u\nabla_x r\|_\infty h+  \frac{C\|\nabla_u b\|_\infty e^{A_0h}h}{1-e^{(A_0-\beta )h}} \ll \nb_x r \rl_\infty \sqrt{h} .
\eeq

Finally, since
    \[
    \begin{aligned}
        \nb_x \pi^*_h (x,u) & = \frac{\exp\l(\frac{ Q^*(x,u)}{\lam h}\r)\frac{ \nb_xQ^*(x,u)}{\lam h}}{Z(x)}- \frac{\exp\l(\frac{ Q^*(x,u)}{\lam h}\r)\nb Z(x)}{Z(x)^2}\\
    & = \frac{\pi^*_h (x,u)}{\lam h} \l( \nb_xQ^* - \lam h\frac{\nb Z}{Z}\r) = \frac{\pi^*_h (x,u)}{\lam h} \l(\nb_xQ^* - \E_{u\sim \pi^*_h }[\nb_xQ^*]\r),
    \end{aligned}
    \]
we get from \eqref{Q8} that
\[
\begin{aligned}
        \ll \nb_x (\ln \pi^*_h ) \rl_\infty  &\leq (\lam h)^{-1}  \l\|\nb_xQ^* - \E_{u\sim \pi^*_h }[\nb_xQ^*]\r\|_\infty\\
        &\leq (\lam h)^{-1}  |U| \ll \nb_u\nb_x Q^* \rl_\infty\\
        &\leq (\lam \sqrt{h})^{-1}  |U| C ( \|\nabla_u\nabla_x r\|_\infty \sqrt{h} + \|\nabla_ub\|_\infty\|\nabla_xr\|_\infty ).
\end{aligned}
\]
\end{proof}

\section{Convergence to the classical optimal control}

In this section, we compare the classical optimal control and the relaxed optimal control problems, both in the continuous setting. We further need the following condition.
\begin{enumerate}
    \item[(H3)] Assume $ U\subseteq \bbR^N$ with some $N\geq 1$ and  \cite[Assumption 4.2]{pia2022}. In particular, $ U=[0,1]^N$ satisfies the condition. 
\end{enumerate}

To state the precise assumption, for any $\alpha \in (0,1]$ and $e\in\bbS^{N-1}$, we define a cone $ \Delta^\alpha_{e}$ as 
$$
\Delta^\alpha_{e} := \{x \in \mathbb{R}^N : x\cdot e\geq \alpha |x|\}.
$$
Then \cite[Assumption 4.2]{pia2022} says that: there exist $\zeta > 0$ and $\alpha \in (0, 1]$ such that for any $u \in  U$, there is a cone $\Delta^\alpha_{e}$ for some $e=e_u\in\bbS^{N-1}$ such that
\[
\Delta^\alpha_{e}(u)\cap B_{\zeta}(u)\subseteq  U,
\]
where
$\Delta^\alpha_{e}(u):=\{u+x:x\in \Delta^\alpha_{e}\}$.


As a consequence of this condition, we obtain the following lemma, which is used to provide a polynomial upper bound of $\pi^*$ in \eqref{eq:regfeedbackC}. While the proof closely follows the one in \cite[Lemma 4.4]{pia2022}, our claim is slightly more general; we therefore provide a complete proof for clarity and self-containment.

\begin{lemma}[An $L^\infty$ estimate]\lb{L.4.1}
Assuming {\rm (H3)}, let $g:\bbR^d\times\bbR^d\times U\to \bbR$ be a function that is uniformly Lipschitz continuous. Then there exists $C$ depending only on  {\rm (H3)} such that
\begin{equation*}
\Gamma(x, p, u) :=\frac{\exp\left(g(x,p,u) \right)}{\int_{ U} \exp\left(g(x,p,u) \right) du}\leq C (1+\|\nabla_u\, g\|_\infty)^N.
\end{equation*} 
\end{lemma}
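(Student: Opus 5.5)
Fix $(x,p)$ and write $g(u)=g(x,p,u)$ and $L:=\|\nabla_u g\|_\infty$. The plan is to bound the normalizing integral from below by $e^{g(u)}$ times the volume of a region near $u$ where $g$ stays close to $g(u)$. Since $\Gamma$ is invariant under adding a constant to $g$, we may normalize so that $g(u_0)=\sup_U g$ for some maximizer $u_0\in U$ (which exists because $U$ is compact and $g$ is continuous). It then suffices to prove
\[
\int_U \exp\big(g(u')-g(u_0)\big)\,du'\;\geq\; c\,(1+L)^{-N}
\]
for some $c>0$ depending only on (H3). This gives $\Gamma(x,p,u)\le e^{g(u)-g(u_0)}/\big(c(1+L)^{-N}\big)\le c^{-1}(1+L)^N$, because $g(u)\le g(u_0)$.

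To prove the lower bound, set $\delta:=\min\{\zeta,(1+L)^{-1}\}$ and use (H3) at the point $u_0$: there is a direction $e=e_{u_0}$ with $\Delta^\alpha_e(u_0)\cap B_\zeta(u_0)\subseteq U$. In particular the truncated cone $\Delta^\alpha_e(u_0)\cap B_\delta(u_0)$ lies in $U$. For $u'$ in this set, the Lipschitz bound gives
\[
g(u')\ge g(u_0)-L|u'-u_0|\ge g(u_0)-L\delta\ge g(u_0)-1,
\]
so the integrand is at least $e^{-1}$ there. The volume of the truncated cone scales like $|\Delta^\alpha_e\cap B_1|\,\delta^N$, and the constant $c_\alpha:=|\Delta^\alpha_e\cap B_1|>0$ depends only on $\alpha$ and $N$, not on $e$, by rotation invariance. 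Therefore
\[
\int_U e^{g-g(u_0)}\,du'\ \ge\ e^{-1}c_\alpha\,\min\{\zeta,(1+L)^{-1}\}^N\ \ge\ e^{-1}c_\alpha\min\{\zeta,1\}^N(1+L)^{-N},
\]
which is the desired bound.

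The only delicate point is that the cone condition must hold uniformly in $u$, with the same $\alpha$ and $\zeta$ everywhere. This is exactly what (H3) provides, and it is needed because the maximizer $u_0$ may lie on $\partial U$. The uniformity in $(x,p)$ is automatic, since only $\|\nabla_u g\|_\infty$ enters the estimate; the Lipschitz continuity in $(x,p)$ is not used.
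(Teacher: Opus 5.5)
Your argument is correct and is essentially the paper's: both lower-bound the normalizing integral $\int_U e^{g(u')-g(u_0)}\,du'$ by its mass on the truncated cone $\Delta^\alpha_{e_{u_0}}(u_0)\cap B_\zeta(u_0)\subseteq U$, which is $\gtrsim (1+L)^{-N}$. The differences are cosmetic. The paper applies this bound at an arbitrary $u_0$, so your reduction to a maximizer of $g$ is unnecessary. The paper also evaluates $\int e^{-L|u|}$ over the full truncated cone in polar coordinates, splitting into the cases $L\zeta\le 1$ and $L\zeta>1$; your truncation at radius $\min\{\zeta,(1+L)^{-1}\}$ reaches the same bound more directly.
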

\begin{proof}
Fixing any $u_0 \in U$, direct computation yields
\begin{equation*}
\Gamma(x, y, u_0)^{-1} = {\int_U \exp\left(g(x,p,u)-g(x,p,u_0)\right) du} \ge {\int_U \exp\left(-\|\nabla_u\, g\|_\infty|u - u_0|\right) du}.
\end{equation*}
Then, by the assumption, there exists $e_u\in\bbS^N$ such that
\begin{equation*}
\int_U e^{-\|\nabla_u\, g\|_\infty|u-u_0|} du \ge \int_{\Delta^\alpha_{e_{u_0}}(u_0) \cap B_\zeta(u_0)} e^{-\|\nabla_u\, g\|_\infty|u-u_0|} du = \int_{\Delta^\alpha_{e_{1}} \cap B_\zeta(0)} e^{-\|\nabla_u\, g\|_\infty|u|} du, 
\end{equation*}
where, in the last equality, we shift and rotate the cone $\Delta^\alpha_{e_{u_0}}(u_0)$ to $\Delta^\alpha_{e_{1}}$ with $e_1$ denoting the positive direction of the first coordinate of $\bbR^N$. 

It remains to estimate the right-hand side of the above. If $\|\nabla_u\, g\|_\infty \zeta \le 1$,
\beq\lb{4.9}
\int_{\Delta^\alpha_{e_1} \cap B_\zeta(0)} e^{-\|\nabla_u\, g\|_\infty|u|} du \ge \text{Leb}(B_\zeta(0)).
\eeq
If $\|\nabla_u\, g\|_\infty\zeta > 1$, we use polar coordinates to get
\begin{align*}
\int_{\Delta^\alpha_{e_1} \cap B_\zeta(0)} e^{-\|\nabla_u\, g\|_\infty|u|} du &= C_1 \int_0^\zeta r^{N-1} e^{-\|\nabla_u\, g\|_\infty r} dr\\
&= C_1 \|\nabla_u\, g\|_\infty^{-N} \int_0^{\|\nabla_u\, g\|_\infty\zeta} z^{N-1}e^{-z} dz \ge C_1 C_2 \|\nabla_u\, g\|_\infty^{-N},
\end{align*}
where $C_1$ is the surface area of $\bbS^{N-1}$ and $C_2:=\int_0^1 z^{N-1}e^{-z} dz$. This and \eqref{4.9} finish the proof.
\end{proof}

Denote ${\pi}^{*,\lambda}:=\pi^*$ from \eqref{eq:regfeedbackC}, and  ${\pi}_h^{*,\lambda}:=\pi^*_h$ from \eqref{2.16}. Assuming {\rm (H3)} and using Lemma \ref{L.4.1}, we obtain a polynomial upper bound (in terms of $\lambda$) for ${\pi}^{*,\lambda}_h$ and ${\pi}^{*,\lambda}$, which improves the bound of $\exp(C/\lambda)$ that we used before.

\begin{lemma}[Upper bound for $\pi_h^{*,\lambda}$]\lb{L.6.2}
Assuming  {\rm (H2)(H3)}, we have
\[
{\pi}^{*,\lambda}_h(x,u),\quad {\pi}^{*,\lambda}(x,u)\leq  C(1+\lambda^{-N})
\]
\end{lemma}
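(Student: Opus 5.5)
The plan is to apply Lemma \ref{L.4.1} to both policies, which are Gibbs densities. For $\pi^{*,\lambda}$, formula \eqref{eq:regfeedbackC} gives $\pi^{*,\lambda}=\Gamma$ with
\[
g(x,p,u)=\tfrac{1}{\lambda}\big(r(x,u)+b(x,u)\cdot p\big),\qquad p=\nabla V(x).
\]
For $\pi^{*,\lambda}_h$, formula \eqref{dis-opt-control} gives $\pi^{*,\lambda}_h=\Gamma$ with
\[
g(x,u)=\frac{r(x,u)h+\gamma\int V_h(y)\rho_h(y|x,u)\,dy}{\lambda h}.
\]
In both cases Lemma \ref{L.4.1} yields the bound $C(1+\|\nabla_u g\|_\infty)^N$. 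It therefore suffices to show $\|\nabla_u g\|_\infty\le C/\lambda$ with $C$ independent of $h$ and $\lambda\in(0,\lambda_0)$, since then $(1+C/\lambda)^N\le C(1+\lambda^{-N})$. Throughout I will use the Lipschitz dependence of $r$ and $b$ on $u$ that Lemma \ref{lemma:bd of cond-exp} already presupposes through $\|\nabla_u b\|_\infty$.

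\textbf{Continuous policy.} Here $|\nabla_u g|\le \lambda^{-1}\big(\|\nabla_u r\|_\infty+\|\nabla_u b\|_\infty\|\nabla V\|_\infty\big)$. Lemma \ref{L.3.5} gives $\|\nabla V\|_\infty\le C/\sqrt\beta\le C$ uniformly in $\lambda\in(0,\lambda_0)$, so this case is immediate.

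\textbf{Discrete policy.} The first term contributes $\lambda^{-1}\|\nabla_u r\|_\infty$. The second term contributes $(\lambda h)^{-1}\gamma\,\|\nabla_u P_h^u V_h\|_\infty$. Lemma \ref{lemma:bd of cond-exp} bounds this by
\[
\|\nabla_u P_h^uV_h\|_\infty\le \|\nabla_u b\|_\infty\, h\sup_{s\le h,\,u}\|\nabla_x P_s^uV_h\|_\infty .
\]
Lemma \ref{lemma:bd} gives $\sup_{s\le h,\,u}\|\nabla_x P_s^uV_h\|_\infty\le e^{A_0h}\|\nabla V_h\|_\infty$, and Lemma \ref{lemma:bd of v vx}, under $\beta\ge 1+A_0$, gives $\|\nabla V_h\|_\infty\le e^h\|\nabla r\|_\infty$. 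The factor $h$ cancels the $1/h$, leaving $\|\nabla_u g\|_\infty\le C/\lambda$ uniformly in $h\in(0,1)$.

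\textbf{Main obstacle.} The only delicate point is this $h$-cancellation, and it is exactly why the uniform-in-$h$ gradient bound for $V_h$ is essential. The crude bound $\|\nabla V_h\|_\infty=O(h^{-1})$ would give a useless $h^{-1}\lambda^{-1}$. The standing assumptions of Lemma \ref{lemma:bd of v vx} (the condition \eqref{cond00} on $\beta$) must therefore be inherited here. The Lipschitz-in-$u$ requirement of Lemma \ref{L.4.1} is then met, with constant depending only on (H2) and (H3).
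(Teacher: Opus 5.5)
Your proposal is correct and follows the paper's proof essentially step by step. Both policies are written as Gibbs densities and fed into Lemma \ref{L.4.1}, and $\|\nabla_u g\|_\infty\le C/\lambda$ is obtained in two ways: from Lemma \ref{L.3.5} in the continuous case, and in the discrete case from the chain Lemma \ref{lemma:bd of cond-exp}, Lemma \ref{lemma:bd}, Lemma \ref{lemma:bd of v vx} (under $\beta\ge 1+A_0$), where the factor $h$ cancels the $1/h$. You are also right that the $\beta$-condition and the Lipschitz dependence of $r,b$ on $u$ are needed; the paper's proof uses both, although the lemma's statement does not list them.
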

\begin{proof}
    First, let us derive the upper bound for $\pi^{*,\lambda}_{h}$. It follows from \eqref{dis-opt-control} that
\[
    \pi^{*,\lambda}_{h}(x,u) = \frac{\exp\left(g(x,u) \right)}{\int_{ U} \exp\left(g(x,u) \right) du}, \quad\text{ where }
g(x,u):=\frac{r(x,u)h + \gamma\int  V_h(y) \rho_h(y|x,u) dy}{\lambda h}.
\]
By Lemmas \ref{lemma:bd} -- \ref{lemma:bd of v vx}, we get if $\beta\geq 1+2\ll \nb b \rl_\Linf  + \frac{\ll \nb\Sig \rl^2_\Linf}{4\lammin}=:1+A_0$,
\begin{align*}
\left|\int  V_h(y) \rho_h(y|x,u) dy\right|&=\|\nabla_u P_h^uV_h\|_\infty \\
&\stackrel{{\rm Lemma}\, \ref{lemma:bd of cond-exp}}\leq \|\nabla_u b\|_\infty h \sup_{s\in [0,h],u\in U}\|\nabla_x P_s^uV_h\|_\infty\\ 
&\stackrel{{\rm Lemma}\, \ref{lemma:bd}}\leq 
\|\nabla_u b\|_\infty \exp(A_0h) h \|\nabla_x V_h\|_\infty
\\ 
&\stackrel{{\rm Lemma}\, \ref{lemma:bd of v vx}}\leq     \|\nabla_u b\|_\infty \exp(A_0h) e^hh \ll \nb_x r \rl_\infty\leq Ch.
\end{align*}
Hence, we obtain
$|\nabla_u\, g(x,u)|\leq \tfrac{C}{\lambda}.$ We then get from Lemma \ref{L.4.1} that
\beq\lb{pidis}
\pi^{*,\lambda}_{h}(x,u)\leq C (1+\|\nabla_u\, g\|_\infty)^N\leq C(1+\lambda^{-N}).
\eeq

Next, note that
\[
{\pi}^{*,\lambda}(x,u)=\Gamma(x,\nabla V(\cdot),u)
\]
\[
\text{ with }
\Gamma(x,p,u) := \frac{\exp\left(\frac{1}{\lambda} \left[r(x,u) + b(x,u) \cdot p\right] \right)}{\int_{ U} \exp\left(\frac{1}{\lambda} \left[r(x,u) + b(x,u) \cdot p \right] \right) du},
\]
where $V=V[{\pi}^{*,\lambda} ]$ solves \eqref{eq:HJBreg}. By Lemma \ref{L.3.5}, there exists $C\geq 1$ depending only on $d$, $\lambda_0$ and the constants in {\rm (H2)} (independent of $\beta\geq 1$ and $\lambda\in (0,\lambda_0)$) such that $\|\nabla V\|_{\infty}\leq C/\sqrt{\beta}$. Hence, Lemma \ref{L.4.1} implies 
\[
{\pi}^{*,\lambda}(x,u)\leq C(1+\tfrac1\lambda[\|\nabla_u r\|_\infty+\|\nabla_u b\|_\infty\|\nabla V\|_\infty])^N\le C(1+\lambda^{-N})
\]
for some $C$ depending only on $d,\lambda_0$ and the conditions.
\end{proof}

Combining Theorem \ref{T.3.4} with the lemma, we obtain the following corollary.

\begin{corollary}
\lb{C.6.2}
Under the assumptions of Theorem \ref{T.3.4} and {\rm (H3)} and $\lambda\in (0,\lambda_0)$, we have
\begin{align*}
\|V[ \pi^*_h ](\cdot)- V(\cdot)\|_\infty+
\|V_h[\pi^*](\cdot)- V_h(\cdot)\|_\infty+
\|V(\cdot)- V_h(\cdot)\|_\infty\leq C\lambda^{-N} h|\ln h|.
\end{align*}
\end{corollary}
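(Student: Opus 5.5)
The plan is to rerun the argument of Theorem \ref{T.3.4}, keeping the explicit form $C(1+\|\pi^*\|_\infty+\|\pi^*_h\|_\infty)$ of the constant in place of $C\exp(C/\lambda)$, and then insert the polynomial bounds of Lemma \ref{L.6.2}. Concretely, the last sentence of Theorem \ref{T.3.4} (which comes from Corollary \ref{C.3.4}, i.e.\ Theorem \ref{P.3.3} with $L_1=\|\pi\|_\infty$, $L_2\ge\|\nabla V[\pi]\|_\infty$) gives
\[
\|V[\pi^*_h]-V\|_\infty+\|V_h[\pi^*]-V_h\|_\infty+\|V-V_h\|_\infty\le C\big(L_2(1+\|\pi^*\|_\infty+\|\pi^*_h\|_\infty)+\lambda L_1\ln L_1\big)h|\ln h|,
\]
with $L_1=\max(2,\|\pi^*\|_\infty,\|\pi^*_h\|_\infty)$.

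\textbf{Step 1.} I check that $L_2$ stays bounded independently of $\lambda\in(0,\lambda_0)$.
\begin{itemize}
\item For $\pi^*$, Lemma \ref{L.3.5} gives $\|\nabla V\|_\infty\le C/\sqrt\beta$ uniformly in $\lambda$.
\item For $\pi^*_h$, Lemma \ref{L.2.8} gives $\|\nabla V[\pi^*_h]\|_\infty\le C$. Its constant depends on $\lambda_0\|\int\pi\ln\pi\,du\|_\infty$, which is at most $C\lambda_0(1+\ln\|\pi^*_h\|_\infty)$.
\item By Lemma \ref{L.6.2}, this is $\le C(1+|\ln\lambda|)$. Multiplied by $\lambda$ inside the equation, this stays bounded.
\end{itemize}
The point to be careful about is that the constant depends on the $\lambda$-weighted entropy term, not on $\ln\|\pi\|_\infty$ alone. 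The $C^2$ requirement on $V[\pi^*_h]$ in Theorem \ref{P.3.3} follows from the Lipschitz continuity of $\pi^*_h$ (Proposition \ref{prop:gradpi}) and Lemma \ref{L.2.8}. The resulting constants may depend on $h$, but this is only qualitative and does not enter the estimate.

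\textbf{Step 2.} I substitute $L_1\le C(1+\lambda^{-N})$ from Lemma \ref{L.6.2}. Then $L_1L_2\le C(1+\lambda^{-N})\le C\lambda^{-N}$ for $\lambda<\lambda_0$, with $C$ depending on $\lambda_0$. Also $\lambda L_1\ln L_1\le C\lambda(1+\lambda^{-N})(1+|\ln\lambda|)\le C\lambda^{-N}$, since $\lambda|\ln\lambda|$ is bounded.

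\textbf{Step 3.} Inserting these bounds into the two estimates of Corollary \ref{C.3.4} and into the sandwich argument \eqref{333} gives each of the three terms bounded by $C\lambda^{-N}h|\ln h|$. Summing them yields the claim.

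\textbf{Main obstacle.} The main difficulty is Step 1: making sure that no hidden $\exp(C/\lambda)$ dependence enters through the constants of Theorem \ref{P.3.3} or Lemma \ref{L.2.8}. Theorem \ref{P.3.3} states that its $C$ depends only on $d$, $b$ and $\Sigma$, so the only $\lambda$-dependence is through $L_1$ and $L_2$. These are handled as described above.
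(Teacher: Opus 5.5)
Your proposal is correct and takes essentially the paper's route. The paper simply combines the refined constant $C(1+\|\pi^*\|_\infty+\|\pi^*_h\|_\infty)$ from the last sentence of Theorem \ref{T.3.4} with the polynomial bounds of Lemma \ref{L.6.2}. Your Step 1 and your handling of $\lambda L_1\ln L_1$ spell out the $\lambda$-uniformity of $L_2$ and of the entropy term, which the paper uses implicitly through Corollary \ref{C.3.4}; reading the dependence in Lemma \ref{L.2.8} as being through $\lambda\|\int_U\pi\ln\pi\,du\|_\infty$ is the right interpretation.
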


\vspace{1mm}

For a general feedback randomized policy  ${\pi}:\bbR^d\to \mathcal{P}( U)$, let 
\begin{equation}\lb{5.2}
v[{\pi}](x) = \mathbb{E}\bigg[ \int_0^{\infty} e^{-\beta t}   \tilde r(X^{{\pi}}_t, {\pi}(X_t^{{\pi}},\cdot) ) dt \bigg| X^{{\pi}}_0 = x \bigg],
\end{equation}
where $X^{{\pi}}_t$ satisfies \eqref{eq:expS} with $\varpi_t(u):={\pi}(X^\pi_t,u)$ and $\tilde r$ is from \eqref{eq:tilder}. 

The following result measures the difference between the optimal feedback controls for the classical problem and the relaxed problem as $\lambda\to 0$. Recall $V[{\pi} ]$ from \eqref{2.2}. 

\begin{theorem}[Continuous feedback controls]\lb{T.5.2}
Assume {\rm (H2)(H3)}. There exists $C$  depending only on $d$, {\rm (H2)(H3)} and an upper bound of $\lambda$ and $1/\beta$ such that
\[
\left|v[{\pi}^{*,\lambda}](\cdot)-V[{\pi}^{*,\lambda} ](\cdot)\right|\leq C\lambda (1+|\ln \lambda|).
\]
Moreover,
letting $v$ solve \eqref{eq:HJBclassical}, we have
\[
\left|v(\cdot)-v[{\pi}^{*,\lambda}](\cdot)\right|\leq C\lambda (1+|\ln \lambda|).
\]
\end{theorem}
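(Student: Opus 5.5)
The first estimate is a pure entropy bound. By \eqref{2.2} and \eqref{5.2}, $v[\pi^{*,\lambda}]$ and $V[\pi^{*,\lambda}]$ use the same state process $X^{\pi^{*,\lambda}}$ and differ only in the running term
\[
-\lambda\int_U \pi^{*,\lambda}(x,u)\ln\pi^{*,\lambda}(x,u)\,du .
\]
So it suffices to bound $\big|\int_U \pi\ln\pi\,du\big|$ uniformly in $x$ and then integrate against $e^{-\beta t}$, which gives a factor $1/\beta$.

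\textbf{Upper bound on the integral.} Lemma \ref{L.6.2} gives $\pi^{*,\lambda}\le C(1+\lambda^{-N})$, hence
\[
\int_U\pi\ln\pi\,du\le \ln\|\pi\|_\infty\le C(1+|\ln\lambda|).
\]
\textbf{Lower bound on the integral.} By Jensen (equivalently, since $U$ is compact with $|U|<\infty$ by (H1)/(H3)), $\int_U \pi\ln\pi\,du\ge -\ln|U|$.

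Together these give $|v[\pi^{*,\lambda}]-V[\pi^{*,\lambda}]|\le C\lambda(1+|\ln\lambda|)/\beta$, which is the first claim.

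For the second claim, I would sandwich. The upper direction $v[\pi^{*,\lambda}]\le v$ is clear, because a relaxed feedback control gives a value no larger than the classical supremum: by linearity of $\tilde b,\tilde r$ in $\uppi$ and a verification argument with the HJB \eqref{eq:HJBclassical}, $\tilde r(x,\uppi)+\tilde b(x,\uppi)\cdot\nabla v\le \sup_u[r+b\cdot\nabla v]$. Alternatively, one can use the standard fact that relaxed and strict values coincide.

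For the lower direction, I would write
\[
v[\pi^{*,\lambda}] \ge V[\pi^{*,\lambda}]-C\lambda(1+|\ln\lambda|) = V-C\lambda(1+|\ln\lambda|),
\]
using the first part and $V=V[\pi^{*,\lambda}]$. It then remains to show $V\ge v-C\lambda(1+|\ln\lambda|)$.

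For that, I would compare the PDEs \eqref{main} and \eqref{eq:HJBclassical} via the Hamiltonians. Set $g(u)=r(x,u)+b(x,u)\cdot p$ with $|p|\le C$. Then
\[
H_\lambda(x,p):=\lambda\ln\int_U e^{g/\lambda}du\ \ge\ \sup_u g - C\lambda(1+|\ln\lambda|).
\]
To see this, let $u_0$ be a maximizer. Since $g$ is Lipschitz in $u$ with constant $C(1+|p|)$, we have $g\ge g(u_0)-L|u-u_0|$, and the cone condition (H3) gives $\int_U e^{(g-g(u_0))/\lambda}du\ge c(\lambda/L)^N$ by the computation in Lemma \ref{L.4.1}. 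This produces the $N\lambda|\ln\lambda|$ loss.

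Consequently $v-C\lambda(1+|\ln\lambda|)/\beta$ is a subsolution of \eqref{main}, with the needed gradient bound $\|\nabla v\|_\infty\le C$ holding by the classical theory. The comparison principle then gives $V\ge v-C\lambda(1+|\ln\lambda|)$, which closes the estimate.

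The main obstacle is this Hamiltonian lower bound. It requires that $\nabla v$ be bounded uniformly and that (H3) hold at the maximizer $u_0$, which may lie on $\partial U$; the cone condition is exactly what handles the boundary case. One must also justify the comparison principle for $v$ as a viscosity subsolution. If $v$ is not smooth, I would instead argue by swapping roles: $V$ is $C^{2,\alpha}$ by Lemma \ref{L.3.5} and is a supersolution of the classical HJB up to the same error, and comparison then applies.
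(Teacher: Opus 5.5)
Your proposal is correct. For the first claim it follows the paper's argument: the difference is the discounted entropy term, which is bounded above via Lemma~\ref{L.6.2}. Your Jensen bound $\int_U \pi\ln\pi\,du\ge -\ln|U|$ supplies the other side of the absolute value, which the paper leaves implicit.

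For the second claim you take a different route. The paper imports the two-sided estimate $|v-V[\pi^{*,\lambda}]|\le C\lambda(1+|\ln\lambda|)$ from \cite[Corollary 16]{TZZ} and concludes by the triangle inequality with the first claim. You instead prove only $V\ge v-C\lambda(1+|\ln\lambda|)$ yourself. This comes from the Hamiltonian gap $\lambda\ln\int_U e^{g/\lambda}du\ge \max_U g-C\lambda(1+|\ln\lambda|)$ (the cone computation of Lemma~\ref{L.4.1}) together with comparison. You then get the other direction for free from the optimality bound $v[\pi^{*,\lambda}]\le v$.

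What your route buys:
\begin{itemize}
\item It is self-contained, with no appeal to \cite{TZZ}.
\item It shows the second claim uses only the Jensen side of the entropy term. So the $\lambda|\ln\lambda|$ loss there comes entirely from the Hamiltonian gap under (H3), not from the polynomial sup bound of Lemma~\ref{L.6.2}.
\item It also recovers the two-sided value estimate, since $V=V[\pi^{*,\lambda}]\le v[\pi^{*,\lambda}]+\lambda\ln|U|/\beta\le v+\lambda\ln|U|/\beta$.
\end{itemize}
The paper's route is simply shorter, because it outsources this PDE comparison to \cite{TZZ}.

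Of your two variants, the fallback is the cleaner one. With $\|\nabla V\|_\infty\le C$ from Lemma~\ref{L.3.5}, $V+C\lambda(1+|\ln\lambda|)/\beta$ is a classical supersolution of \eqref{eq:HJBclassical}. Likewise $v[\pi^{*,\lambda}]$, which is $C^2$ as in Lemma~\ref{L.2.8}, is a classical subsolution of \eqref{eq:HJBclassical}, because $\widetilde r(x,\pi)+\widetilde b(x,\pi)\cdot p\le\sup_u[r(x,u)+b(x,u)\cdot p]$. Both directions then follow from comparison with $v$, and you never need a gradient bound or any regularity for $v$ itself.
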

\begin{proof}
By \eqref{5.2} and \eqref{2.2}, 
\beq\lb{5.3}
v[{\pi}](x)-V[\pi](x)=\mathbb{E}\bigg[  \lambda \int_{ U} {\pi}(X_t^{{\pi}},u) \ln {\pi}(X_t^{{\pi}},u) du  \bigg) dt \bigg| X^{{\pi}}_0 = x \bigg].
\eeq
Thus, to prove the first estimate, it suffices to show that
\[
\int_ U {\pi}^{*,\lambda}(x,u) \ln{\pi}^{*,\lambda}(x,u)du \leq C(1+|\ln\lambda|).
\]
Indeed, it follows from Lemma \ref{L.6.2} that
\[
\int_ U {\pi}^{*,\lambda}(x,u) \ln{\pi}^{*,\lambda}(x,u)du \leq \ln C(1+\lambda^{-N})\int_ U {\pi}^{*,\lambda}(x,u) du \leq C'(1+|\ln\lambda|)
\]
which finishes the proof of the first statement.

For the second one, it follows from \cite[Corollary 16]{TZZ} and the remark after that 
\[
|v(\cdot)-V[{\pi}^{*,\lambda} ](\cdot)|\leq C\lambda(1+|\ln\lambda|).
\]
This and the first claim imply the second claim.
\end{proof}

The following result says that given time discretization $h$, in order to approximate the classical optimal control problem ($\lambda=0$), we might select an optimal temperature parameter $\lambda$ depending on the dimension $N$ of the action space. Indeed, if selecting $\lambda:=h^{1/(N+1)}$, the error is bounded by
\[
C\lambda |\ln \lambda|+C\lambda^{-N} h|\ln h|\leq C h^{\tfrac{1}{N+1}}|\ln h|.
\]


\begin{theorem}[MDP feedback controls]\lb{T.6.3}
 Assume {\rm (H2)(H3)}, $h\in(0,\tfrac12),\lambda_0>0$, and \eqref{cond00}.
There exists $C$  depending only on $d$, {\rm (H2)(H3)} and $\lambda_0$ such that, for any $\lambda\in (0,\lambda_0)$, we have
\begin{align*}
    \left|v(\cdot)-V[{\pi}_h^{*,\lambda} ](\cdot)\right|&\leq C\lambda |\ln \lambda|+C\lambda^{-N} h|\ln h|,\\
\left|v(\cdot)-v[{\pi}_h^{*,\lambda}](\cdot)\right|&\leq C\lambda |\ln \lambda|+C\lambda^{-N} h|\ln h|.
\end{align*}   
\end{theorem}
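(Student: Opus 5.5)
The first estimate follows from a triangle inequality through $V^\lambda:=V$, the optimal value of the continuous relaxed problem:
\[
\bigl|v-V[\pi_h^{*,\lambda}]\bigr|\le \bigl|v-V[\pi^{*,\lambda}]\bigr|+\bigl|V-V[\pi_h^{*,\lambda}]\bigr|,
\]
using $V=V[\pi^{*,\lambda}]$. The first term is bounded by $C\lambda(1+|\ln\lambda|)$ via the second part of the proof of Theorem \ref{T.5.2}, i.e. \cite[Corollary 16]{TZZ}. The second term is bounded by $C\lambda^{-N}h|\ln h|$ by Corollary \ref{C.6.2}, which combines Theorem \ref{T.3.4} with the bound $\|\pi^*\|_\infty+\|\pi_h^*\|_\infty\le C(1+\lambda^{-N})$ from Lemma \ref{L.6.2}. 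For $\lambda<\lambda_0$ we have $1+\lambda^{-N}\le C(\lambda_0)\lambda^{-N}$, and $\lambda(1+|\ln\lambda|)$ is absorbed into $C\lambda|\ln\lambda|$ for $\lambda$ bounded away from $1$ (or one keeps $1+|\ln\lambda|$), as intended in the statement. This gives the first line.

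For the second estimate, write
\[
\bigl|v-v[\pi_h^{*,\lambda}]\bigr|\le \bigl|v-V[\pi_h^{*,\lambda}]\bigr|+\bigl|V[\pi_h^{*,\lambda}]-v[\pi_h^{*,\lambda}]\bigr|.
\]
The first term is the estimate just proved. For the second term, use the identity \eqref{5.3}, which holds for any feedback policy $\pi$. It expresses $v[\pi]-V[\pi]$ as the discounted expected entropy term $\lambda\int_U\pi\ln\pi\,du$ along $X^\pi$. Bound that integrand pointwise. Its upper bound is $\lambda\ln\|\pi\|_\infty\le\lambda\ln\bigl(C(1+\lambda^{-N})\bigr)\le C\lambda(1+|\ln\lambda|)$, applying Lemma \ref{L.6.2} to $\pi_h^{*,\lambda}$. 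Its lower bound is $\int_U\pi\ln\pi\,du\ge-\ln|U|$ by Jensen's inequality, since $|U|<\infty$. Integrating against $e^{-\beta t}$ then gives $|V[\pi_h^{*,\lambda}]-v[\pi_h^{*,\lambda}]|\le C\lambda(1+|\ln\lambda|)/\beta$, and combining the two pieces yields the second line.

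The main point to check is that \eqref{5.3} is legitimate for $\pi=\pi_h^{*,\lambda}$. This requires the SDE \eqref{eq:expS} driven by this feedback to be well posed, which holds because Proposition \ref{prop:gradpi} shows $\pi_h^{*,\lambda}$ is Lipschitz in $x$, albeit with a constant of order $h^{-1/2}$ that only needs to be finite. It also requires the integrand to be bounded, which the two-sided entropy bound above provides. One must further confirm that the hypotheses of Corollary \ref{C.6.2} (conditions \eqref{cond00} and (H3)) are all in force, which they are by assumption. The remaining work is constant bookkeeping so that $C$ depends only on $d$, (H2), (H3) and $\lambda_0$.
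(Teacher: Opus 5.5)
Your proposal is correct and follows essentially the same route as the paper. The first estimate comes from the triangle inequality through $V=V[\pi^{*,\lambda}]$, combining Theorem~\ref{T.5.2} (i.e.\ \cite[Corollary 16]{TZZ}) with Theorem~\ref{T.3.4}/Corollary~\ref{C.6.2} and the polynomial bound of Lemma~\ref{L.6.2}, and the second adds the entropy gap $v[\pi_h^{*,\lambda}]-V[\pi_h^{*,\lambda}]$ controlled via \eqref{5.3} and \eqref{pidis}, where your Jensen lower bound $\int_U\pi\ln\pi\,du\ge-\ln|U|$ makes explicit the half of the two-sided entropy bound the paper leaves implicit.
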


\begin{remark}[Comparison to \cite{zhu2025optimal}]\label{rmk: compare phibe}
This is the main theorem of the paper. It shows that if one computes the optimal feedback control from the regularized MDP formulation \eqref{dis-be-soft}, then deploys this policy in the true continuous-time dynamics and evaluates performance using the continuous-time objective, the resulting policy is suboptimal compared to the true optimal control. The performance gap is bounded by
\[
\tilde{O}(\lambda + \lambda^{-N} h).
\]
Optimizing over the regularization parameter \(\lambda\) yields the bound \(\tilde{O}(h^{\frac{1}{N+1}})\). This can be interpreted as the best achievable rate for any regularized RL algorithms.

The MDP-based framework relies solely on the discrete-time transition distribution \(\rho_h(y \mid x,u)\) defined in \eqref{def of rho_h} to approximate the optimal feedback control associated with the continuous-time HJB equation \eqref{eq:HJBclassical}. In contrast, \cite{zhu2025optimal} propose an alternative approximation, termed PhiBE, which uses the same transition distribution \(\rho_h(y \mid x,u)\) but achieves an \(O(h)\) convergence rate, strictly faster than that of the MDP framework. This is because the PhiBE formulation mirrors the structure of the (unregularized) HJB equation and explicitly leverages the SDE-driven nature of the underlying dynamics. By comparison, the MDP formulation does not incorporate this structural information, even though it is used in the analysis. 
\end{remark}
\color{black}

\begin{proof}
It follows from Lemma \ref{L.6.2} that
\[
\pi^{*,\lambda}(x,u),\quad \pi^{*,\lambda}_{h}(x,u)\leq C (1+\|\nabla_u\, g\|_\infty)^N\leq C(1+\lambda^{-N}).
\]
The first claim follows from 
\[
\left|v(\cdot)-V[{\pi}^{*,\lambda} ](\cdot)\right|\leq C\lambda (1+|\ln \lambda|)\quad\text{ and }
\]
\begin{align*}
V[\pi^{*,\lambda}_{h} ](\cdot)\leq V[{\pi}^{*,\lambda} ](\cdot)\leq  
V[\pi^{*,\lambda}_{h} ](\cdot)+C(1+\lambda^{-N}) h|\ln h|,
\end{align*}
which are due to Theorem \ref{T.5.2} and Theorem \ref{T.3.4}, respectively.

For the second claim, it remains to bound
$
v[\pi^{*,\lambda}_{h}](x)-V[\pi^{*,\lambda}_{h} ](x)$.
Due to \eqref{5.3}, we have
\beq\lb{5.5}
\|v[\pi^{*,\lambda}_{h}]-V[\pi^{*,\lambda}_{h} ]\|_\infty\leq \lambda \sup_{x\in\bbR^d} \left|\int_ U {\pi}_h^{*,\lambda}(x,u) \ln{\pi}_h^{*,\lambda}(x,u)du\right|.
\eeq
It follows from \eqref{pidis} that
\[
\left|\int_ U {\pi}_h^{*,\lambda}(x,u) \ln{\pi}_h^{*,\lambda}(x,u)du\right|\leq C(1+|\ln\lambda|)
\]
which, combining with \eqref{5.5}, yields the second claim. 
\end{proof}

\section*{Acknowledgements}
Y. Zhu is supported by the NSF grants No 2529107. Y. P. Zhang acknowledges support from NSF CAREER grant DMS-2440215 and Simons Foundation Travel Support MPS-TSM-00007305.

\appendix

\section{Remarks}

In this section, we discuss two explicit continuous stochastic optimal control examples. The first one shows that the feedback control of a classical optimal control problem is, in general, discontinuous. While the one for the relaxed control problem is continuous. The second example shows that such discontinuity of feedback controls might lead to completely different dynamics if one applies the it discretely in time.

\medskip

The first one is a temperature control problem \cite{GXZ20}. Consider
\begin{equation}
\label{eq:bang}
\begin{aligned}
v(x) :=  \inf_{(\nu_t)}  \, \mathbb{E}\bigg[\int_0^\infty e^{- t} f(X_t) dt \bigg], 
\end{aligned}
\end{equation}
where $X_t$ satisfies
\[
dX_t = -\nabla f(X_t) dt + \sqrt{2 \nu_t} dB_t, \quad X_0 = x,
\]
and $(\nu_t, \, t \ge 0)$ is taken as the control over $ U=[a,1]$ with $a\in (0,1)$.
For $b(x,u) = -\nabla f(x)$, and $\sigma(x,u) = \sqrt{2u}$, $v$ satisfies
\begin{equation}
\label{eq:HJBbang}
- v(x) + f(x) - \nabla f(x) \cdot \nabla v(x)+ \inf_{u \in [a,1]} \left[u \tr(\nabla^2v(x)) \right] = 0.
\end{equation}
From this, we get that the optimal feedback policy $\mru^*$ can be represented by
\[
\mru^*(x) :=
\left\{ \begin{array}{lcl}
a & \mbox{if } \tr(\nabla^2v(x)) \ge 0, \\
1 & \mbox{if } \tr(\nabla^2v(x))< 0, \\
\end{array}\right.
\]
which is a discontinuous function.

The exploratory version  of \eqref{eq:bang} with temperature parameter $\lambda$ is introduced in \cite{GXZ20}, and the corresponding  exploratory HJB equation for the value function $V$ is given by
\begin{equation}
\label{eq:ellipticPDE}
-\rho V_{}(x)  + \nabla f(x) \cdot \nabla V_{}(x)+ f(x) - \lambda \ln \int_a^1 \exp\left(- \frac{\tr(\nabla^2 V_{}(x))}{\lambda}  u \right) du = 0,
\end{equation}
with the optimal feedback control 
\begin{equation*}
\pi^*(x,u) = {\exp\left(- \frac{\tr(\nabla^2 V_{}(x))}{\lambda}  u \right)}\left[{\int_a^1 \exp\left(- \frac{\tr (\nabla^2 V_{}(x))}{\lambda}  u \right) du}\right]^{-1}, \quad u \in [a,1].
\end{equation*}
So for the relaxed problem, the feedback control as a distribution is continuous, by Lemma \ref{L.3.5}.

\medskip

In the next example, we show that the dynamics are not stable with respect to the discretization.
We consider $\sigma\equiv 0$ and both the reward function $r^h(x)$ and the optimal value function $v^h(x)$ depend on $h\in (0,1)$.  
Let us take $\beta>0$, $\gamma\in \bbR$ and $n\geq 2$, and consider the following equation in dimension $1$:
\beq\lb{4.1}
-\beta v^h+\sup_{u\in [-1,1]}\left[r^h(x,u)+u v^h_x\right]=0, \qquad x \in \R, 
\eeq
where 
\[
r^h(x,u):=\beta (\gamma x+h^{n}\sin\frac{2\pi x}h)-\gamma u-2\pi  h^{n-1}| \cos\frac{2\pi x}{h}|.
\]
By direct computation,
\[
v^h(x):=\gamma x+h^{n}\sin\frac{2\pi x}h
\]
is the solution to \eqref{4.1} and $h^{n}\sin\frac{2\pi x}h$ can be viewed as a perturbation when $h$ is small.
Moreover, we have
\beq\lb{4.11}
\begin{aligned}
r^h(x,u)+u v^h_x
&=\beta v^h+2\pi h^{n-1}(u\cos\frac{2\pi x}{h}-| \cos\frac{2\pi x}{h}|).    
\end{aligned}
\eeq
And we claim that the following holds:
\begin{itemize}
    \item The $C^n$ norm of $v^h$ and the Lipschitz norm of $r^h$ are locally uniformly finite independent of $h\in (0,1)$.

    \item The system states in the discrete setting (with step size $h$) and in the continuous setting can be completely different even as $h\to 0$. 
\end{itemize}

In the following, we compute the dynamics of the system states in both  the discrete and the continuous setting. Using \eqref{4.1} and \eqref{4.11}, the optimal feedback control is given by
\[
 \mru^*(x):=
\begin{cases}
    1 &\text{ when }\cos \frac{2\pi x}{h}>0,\\
    -1&\text{ when }\cos \frac{2\pi x}{h}<0.
\end{cases}
\]
Let $Y$ solve iteratively for $t\in [ih,(i+1)h)$ with $i\geq 0$,
\beq\lb{ode1}
dY= \mru^*( Y(ih))dt\quad\text{ and }Y(0)=0.
\eeq
Since $ \mru^*(Y(0))= \mru^*(0)=1$, the ODE \eqref{ode1} yields
$Y(t)=t$ for $t\in [ih,(i+1)h]$. Next, for $t\in [h,2h]$, we note that
$\cos(\tfrac{2\pi Y(h)}h) =\cos2\pi=1$.
Thus, $ \mru^*(Y(h))=1$ and again the ODE \eqref{ode1} yields $Y(t)=t$ for $t\in [h,2h]$. Continuing in this manner, we obtain 
\beq\lb{Yt}
Y(t)=t\text{ for all $t\geq 0$},\quad\text{and}\quad   \mru^*(Y(ih))= \mru^*(ih)=1.
\eeq
However, the ODE $dX= \mru^*(X)dt$ becomes for all $i\in\mathbb{Z}$,
\[
dX=dt \text{ when }X(t)\in (ih-\tfrac{h}{4},ih+\tfrac{h}{4})
\]
\[
dX=-dt \text{ when }X(t)\in (ih+\tfrac{h}{4},ih+\tfrac{3h}{4}).
\]
Solving the ODE with initial data $X(0)=0$ yields
\[
X(t)=\left\{
\begin{aligned}
   & t-ih &&\text{ if }t\in (ih-\tfrac{h}{4},ih+\tfrac{h}{4}),\\
    &    ih+\tfrac{h}{2}-t &&\text{ if }t\in (ih+\tfrac{h}{4},ih+\tfrac{3h}{4}),
\end{aligned}
\right.
\]
which is completely different from $Y(t)$ in \eqref{Yt} in the discrete setting.
Thus, even just perturbing the system slightly (as $h\to 0$), the system states obtained from the classical optimal control problem \eqref{eq:classicalS} and \eqref{eq:classicalV} can be largely different. In this sense, the feedback control problem is not stable with respect to the discretization.

It remains unclear whether the value functions exhibit instability relative to the discretization; however, in this specific example, the resulting value functions do not differ significantly.

\section{Proof of the discrete-time regularized Bellman equation}\label{proof of r-be}
\begin{proposition}
\label{prop:DPP_regularized_discrete}
The discrete-time regularized Bellman equation \eqref{dis-be-soft} is equivalent to the regularized optimal control problem defined in \eqref{eq:regVdis} - \eqref{2.16}. 
\end{proposition}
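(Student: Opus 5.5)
The plan is to prove the equivalence in two directions, using the contraction structure from Lemma \ref{dis-opt-contraction} and Proposition \ref{T.2.3}. Let $W$ denote the unique bounded fixed point of $T^*$, which by Lemma \ref{dis-operator} is the unique bounded solution of \eqref{dis-be-soft}. We will show that $W=V_h$, where $V_h$ is the value \eqref{2.16} of the MDP \eqref{eq:regVdis}, and that $\pi^W_h$ attains the supremum.

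\textbf{Step 1 (one-step decomposition for a fixed policy).} For a fixed feedback policy $\pi$ with $\pi(x,\cdot)\in\mathcal P(U)$, use the independence of $\xi^0$ from the Brownian motion to condition on $\mathcal F$ at time $h$. The conditional law of $Y_h^\pi$ given $Y_0=x$ and $\nu_0=u$ is $\rho_h(\cdot|x,u)$. The process after time $h$ is again a sampled process of the form \eqref{2.3}, started at $Y_h^\pi$ and driven by the shifted Brownian motion and by $(\xi^{i+1})_i$. This restart property is the main technical point: it needs a Markov property for the scheme \eqref{2.3} together with a measurable-selection/regular-conditional-probability argument. Granting it, and noting that the integral of the entropy term against $\pi$ gives $-\lambda h\int\pi\ln\pi$, we obtain
\[
V_h[\pi](x)=\int_U\Big[r^\lambda_h(x,u;\pi)+\gamma\int_{\R^d}V_h[\pi](y)\rho_h(y|x,u)dy\Big]\pi(x,u)du=:T^\pi V_h[\pi](x).
\]
The series converges absolutely because $\gamma<1$ and $r$ is bounded. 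The entropy is bounded below by $-\ln|U|$ and above whenever $\pi$ is bounded; for unbounded $\pi$ we restrict to policies with finite entropy, since otherwise the value is $-\infty$ and irrelevant for the supremum. Exactly as in Lemma \ref{dis-opt-contraction}, $T^\pi$ is a $\gamma$-contraction, so $V_h[\pi]$ is its unique fixed point.

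\textbf{Step 2 ($V_h\le W$).} Since $T^\pi\Phi\le T^*\Phi$ pointwise and $T^*$ is monotone, iterating gives
\[
V_h[\pi]=(T^\pi)^nV_h[\pi]\le (T^*)^nV_h[\pi]\to W.
\]
Taking the supremum over $\pi$ yields $V_h\le W$.

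\textbf{Step 3 ($W\le V_h$ and attainment).} Take $\pi=\pi^W_h$ from \eqref{dis-opt-control-form}. It is bounded, since its exponent has bounded oscillation in $u$, and it is measurable in $x$, so it is admissible. By Lemma \ref{dis-operator}, $T^{\pi^W_h}W=T^*W=W$. Uniqueness of the fixed point of $T^{\pi^W_h}$ then gives $V_h[\pi^W_h]=W$. Hence $W\le V_h$, so $V_h=W$ satisfies \eqref{dis-be-soft} and the supremum is attained at $\pi_h^*$.

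Conversely, any bounded solution of \eqref{dis-be-soft} is a fixed point of $T^*$, so it equals $W=V_h$. This establishes the equivalence.
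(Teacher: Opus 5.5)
Your proof is correct, but it takes a different route from the paper's. Both arguments start from the fixed-policy identity $V_h[\pi]=T^\pi V_h[\pi]$, together with contraction and monotonicity of $T^\pi$. The paper then works directly with the pointwise supremum $V_h$. It gets $V_h\le T^*V_h$ from $V_h[\pi]=T^\pi V_h[\pi]\le T^\pi V_h\le T^*V_h$. For the reverse inequality it takes an $\varepsilon$-optimal policy $\pi_\varepsilon$ with $T^*V_h\le T^{\pi_\varepsilon}V_h+\varepsilon$ and uses the contraction of $T^{\pi_\varepsilon}$ to reach $T^*V_h\le V_h+\varepsilon/(1-\gamma)$. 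You instead first build the Banach fixed point $W$ of $T^*$. You bound every $V_h[\pi]$ by $W$ through the iterate comparison $(T^\pi)^n\le (T^*)^n$. You close the argument by evaluating the Gibbs policy $\pi^W_h$, whose value equals $W$ by uniqueness of the bounded fixed point of $T^{\pi^W_h}$.

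Your route never applies $T^\pi$ or $T^*$ to the supremum $V_h$ itself. So you need no a priori knowledge that $V_h$ is bounded and measurable; the paper uses both tacitly, e.g.\ in $T^\pi V_h$ and in $\|V_h-V[\pi_\varepsilon]\|_\infty$. In your argument these properties follow from $V_h=W$. You also get, in the same stroke, attainment of the supremum at $\pi_h^*$ and uniqueness of bounded solutions of \eqref{dis-be-soft}. And you avoid choosing an $\varepsilon$-optimal policy uniformly and measurably in $x$. The paper's $\varepsilon$-argument, in turn, does not use the explicit maximizer from Lemma \ref{dis-operator}, so it would survive when the one-step supremum is not attained; that generality is not needed here.

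Two small points. First, the existence of $W$ needs only Lemma \ref{dis-opt-contraction} plus Banach's theorem. Citing Proposition \ref{T.2.3} for it would be circular, since that proposition's statement already presupposes the equivalence being proved. Second, in Step 2, a policy with pointwise finite but $x$-unbounded $\int_U\pi\ln\pi$ gives a $V_h[\pi]$ that is bounded above but possibly not below. Then $(T^*)^nV_h[\pi]\to W$ is not directly justified. The fix is to start the iteration from the constant $\bar c:=h(\|r\|_\infty+\lambda\ln|U|)/(1-\gamma)\ge V_h[\pi]$:
\[
V_h[\pi]=(T^\pi)^nV_h[\pi]\le (T^\pi)^n\bar c\le (T^*)^n\bar c\to W .
\]
This uses only monotonicity of $T^\pi$ and $T^\pi\le T^*$ on bounded functions.
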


\begin{proof}
\textbf{Step 1. Fixed feedback policy:}
Fix an admissible feedback randomized policy $\pi(x,\cdot)$. Denoting $r_h^\lambda(x,u;\pi):=r(x,u)-\lambda \log \pi( x,u))h$, by \eqref{eq:regVdis},
\[
\begin{aligned}
&V_h[\pi](x)
=\int_U \pi(x,u)
\Bigg(
r_h^\lambda(x,u;\pi)
+e^{-\beta h}\E\l[\sum_{j=0}^\infty e^{-\beta j h}
r_h^\lambda(Y_{(j+1)h}^{\pi},\nu_{(j+1)h};\pi) | Y_0^\pi = x, \nu_0 = u\r]
\Bigg)du\\
&=\int_U \pi(x,u)
\Bigg(
r_h^\lambda(x,u;\pi)
\\
&\hspace{1cm}
+e^{-\beta h}\E\l[ \E\l[\sum_{j=0}^\infty e^{-\beta j h}
r^\lambda(Y_{(j+1)h}^{\pi},\nu_{(j+1)h};\pi)|Y_h^\pi, Y_0^\pi = x, \nu_0 = u\r] | Y_0^\pi = x, \nu_0 = u\r]
\Bigg)du.\\
&=\int_U \pi(x,u)
\Bigg(
r_h^\lambda(x,u;\pi) +e^{-\beta h}\E\l[ V_h[\pi](Y^\pi_h)| Y_0^\pi = x, \nu_0 = u\r]
\Bigg)du,
\end{aligned}
\]
where 
the last equality is by the Markov property. 
Therefore, one has
\[
V_h[\pi](x)
=
\E_{u\sim\pi(x,\cdot)}\l[ {r}_h^\lambda(x,u;\pi)   + \gamma \E_{Y^{{\pi}}_{h}\sim\rho_h(\cdot|x,u)}\l[V_h(Y^{{\pi}}_{h}) \r]\r]
\]

Now we prove the {\bf contraction} property of the operator 
\begin{equation}\label{def of Tpi}
[T^\pi W](x) := \E_{u\sim\pi(x,\cdot)}\l[ r_h^\lambda(x,u;\pi)   + \gamma \E_{Y^{\pi}_{h}\sim\rho_h(\cdot|x,u)} \l[W(Y^{\pi}_{h}) \r]\r]    
\end{equation}
For any $x$,
\begin{equation}
\begin{aligned}
\bigl|T^\pi W_1(x) - T^\pi W_2(x)\bigr| &=
\bigg|
\mathbb E_{u\sim \pi(x,\cdot)}
\Big[
\gamma \mathbb E_{y\sim \rho_h(\cdot|x,u)}
\big(W_1(y) - W_2(y)\big)
\Big]
\bigg| \\
&\le
\gamma \,
\mathbb E_{u\sim \pi(x,\cdot)}
\mathbb E_{y\sim \rho_h(\cdot|x,u)}
\big|W_1(y) - W_2(y)\big| \le
\gamma \|W_1 - W_2\|_\infty.    
\end{aligned}    
\end{equation}
Taking supremum over $x$ on the LHS of the above inequality gives the $\gamma$-contraction:
\begin{equation} \label{contraction-fix-policy}
\|T^\pi W_1 - T^\pi W_2\|_\infty
\le
\gamma \|W_1 - W_2\|_\infty.
\end{equation}

Finally, we prove the {\bf monotonicity} of the operator. If $W_1(x) \leq W_2(x)$ for $\forall x\in \R^d$, then
\begin{equation}\label{mono}
\begin{aligned}
T^\pi W_1(x) &= \E_{u\sim\pi(x,\cdot)}\l[ r_h^\lambda(x,u;\pi)   + \gamma \E_{Y^{\pi}_{h}\sim\rho_h(\cdot|x,u)} \l[W_1(Y^{\pi}_{h}) \r]\r]  \\
&\leq \E_{u\sim\pi(x,\cdot)}\l[ r_h^\lambda(x,u;\pi)   + \gamma \E_{Y^{\pi}_{h}\sim\rho_h(\cdot|x,u)} \l[W_2(Y^{\pi}_{h}) \r]\r] = T^\pi W_2(x)    
\end{aligned}    
\end{equation}

\medskip

\noindent
\textbf{Step 2. Proof of the upper bound:}
Given $V_h(x)$ from \eqref{2.16}, we prove that $V_h(x) \leq T^*[V_h](x)$ with $T^*$ defined in \eqref{111}.

For any feedback randomized policy $\pi(x,\cdot)$, by Step 1, $V_h[\pi]= T^\pi V_h[\pi]$. By the monotonicity of $T^\pi$ \eqref{mono} and $V_h[\pi]\le V_h$ pointwisely, one has $V_h[\pi]= T^\pi V_h[\pi] \leq  T^\pi V_h \leq  T^* V_h $. Taking the supremum over $\pi$ on the RHS, one has, 
\begin{equation}\label{eq:step2}
\begin{aligned}
V_h=\sup_{\pi}V_h[\pi]\leq  T^*V_h, \quad \text{point-wisely}.
\end{aligned}
\end{equation}

\medskip
\noindent
\textbf{Step 3. Proof of the lower bound:}
Fix $\varepsilon>0$. By definition of $T^*$, there exists a feedback policy $\pi_\varepsilon$ such that
\[
T^* V_h(x)
\le
T^{\pi_\varepsilon} V_h(x) + \varepsilon,
\qquad \forall x.
\]
Let $V[\pi_\varepsilon]$ denote the value function under $\pi_\varepsilon$, which satisfies $V[\pi_\varepsilon] = T^{\pi_\varepsilon} V[\pi_\varepsilon]$.
By Step 2 \eqref{eq:step2}, the  monotonicity \eqref{mono}, and $\gamma$-contractivity \eqref{contraction-fix-policy} of $T^{\pi_\varepsilon}$, 
\begin{equation*}
\begin{aligned}
V_h - V[\pi_\varepsilon]& \leq T^*V_h -T^{\pi_\varepsilon} V[\pi_\varepsilon]\\
&\le
\|T^* V_h - T^{\pi_\varepsilon} V_h\|_\infty
+
\|T^{\pi_\varepsilon} V_h - T^{\pi_\varepsilon} V[\pi_\varepsilon]\|_\infty\\
&\le
\varepsilon + \gamma \|V_h - V[\pi_\varepsilon]\|_\infty. 
\end{aligned}    
\end{equation*}
Hence, $\|V_h - V[\pi_\varepsilon]\|_\infty\le
\varepsilon + \gamma \|V_h - V[\pi_\varepsilon]\|_\infty$, which gives $\|V_h - V[\pi_\varepsilon]\|_\infty
\le
\frac{\varepsilon}{1-\gamma}$.
Using again $T^* V_h \le T^{\pi_\varepsilon} V_h + \varepsilon$, we obtain for each $x$,
\begin{align*}
T^* V_h(x)
&\le
T^{\pi_\varepsilon} V_h(x) - T^{\pi_\varepsilon} V[\pi_\varepsilon](x) + V[\pi_\varepsilon](x) + \varepsilon \le \gamma \|V_h - V[\pi_\varepsilon]\|_\infty + V[\pi_\varepsilon](x)
+ \varepsilon 
\end{align*}
Combining the above inequality with $\|V_h - V[\pi_\varepsilon]\|_\infty
\le
\frac{\varepsilon}{1-\gamma}$ yields 
$ T^* V_h(x)\le V_h(x) + \frac{\varepsilon}{1-\gamma}. $
Letting $\varepsilon \downarrow 0$ yields
\begin{equation}\label{eq:step3}
   T^* V_h(x) \le V_h(x). 
\end{equation}

\noindent
\textbf{Step 4. Conclusion:}
Combining \eqref{eq:step2} and \eqref{eq:step3}, we obtain
$V_h(x) =T^* V_h(x)$, which is exactly \eqref{dis-be-soft}. This completes the proof.
\end{proof}

\section{Proof of Lemma \ref{dis-operator} and Lemma \ref{dis-opt-control}}\label{proof of lemma dis-operator}

\begin{proof}[Proof of Lemma \ref{dis-operator}]
    Let 
\[
Q(x,u) := r(x,u)h + \gamma\int W(y) \rho_h(y|x,u) dy,
\]
and for any fixed $x=x_0$, \eqref{dis-opt-control-form} can equivalently written as
    \[
   {\pi}_h^*(u)= {\pi}^W_h(u|x_0) = \frac{e^{Q_0(u)/(\lambda h)}}{Z_0}, \quad Q_0(u) := Q(x_0, u), \quad Z_0 := Z(x_0).
    \]
Using the definitions of $r_h^\lambda$, $\rho_h$ and $Q$, and for any $\pi$, direct computation yields
    \[
    \begin{aligned}
        &\E\l[ {r}_h^\lambda(x_0,u;\pi)   + \gamma \E[W(Y^{{\pi}}_{h}) | Y_0 = x_0]\r] \\
        &=\int_{ U} \l[r(x_0,u)h-\lambda h\ln {\pi}(u)  +\gamma\int W(y)\rho_h(y|x_0,u)dy\r]{\pi}(u) du\\
        &=\lambda h\l(\int \frac{Q_0(u)}{\lambda h}{\pi}(u) - {\pi}(u)\ln{\pi}(u)  du\r)= \lambda h\l(\int (\ln {\pi}^*_h(u)+ \ln (Z_0) ){\pi}(u) - {\pi}(u)\ln {\pi}(u)  du\r)\\
        &=\lambda h\l( \ln (Z_0) - \int {\pi}(u)\ln \frac{{\pi}(u)}{{\pi^*_h} (u)} du\r)  = \lambda h\l[ \ln (Z_0) - \text{KL}({\pi}|| {\pi^*_h}) du\r]
    \end{aligned}
    \]
    Since $\text{KL}({\pi}|| {\pi^*_h}) \geq 0$ and the maximum is achieved at ${\pi} = {\pi^*_h}$, therefore, 
    \[
    \begin{aligned}
        T^*W(x) = &\sup_\pi \E\l[ {r}_h^\lambda(x_0,u;\pi)   + \gamma \E[W(Y^{{\pi}}_{h}) | Y_0 = x_0]\r] = \lambda h \ln (Z_0)
    \end{aligned}
    \]
    and the maximum is achieved at ${\pi} = {\pi^*_h}$.
\end{proof}

\begin{proof}[Proof of Lemma \ref{dis-opt-control}]
By Lemma \ref{dis-operator}, one has
\[
\begin{aligned}
    &\ll  T^*W_1(\cdot) - T^*W_2(\cdot) \rl_\infty\\ 
    =&   \ll \lambda h \ln \l[\frac{\int_{U} \exp\l(\frac{r(x,u)h + \gamma\int  W_2(y)+ (W_1(y) - W_2(y)) \rho_h(y|x,u) dy}{\lambda h}\r) du}{\int_{U} \exp\l(\frac{r(x,u)h + \gamma\int  W_2(y) \rho_h(y|x,u) dy}{\lambda h}\r) du}\r]\rl \\
    =&  \ll \lambda h \ln \l[\frac{\int_{U} \exp\l(\frac{r(x,u)h + \gamma\int  W_2(y)\rho_h(y|x,u) dy}{\lambda h}\r) \exp\l(\frac{\gamma\int  (W_1(y) - W_2(y)) \rho_h(y|x,u) dy}{\lambda h}\r) du}{\int_{U} \exp\l(\frac{r(x,u)h + \gamma\int  W_2(y) \rho_h(y|x,u) dy}{\lambda h}\r) du}\r]\rl\\
    \leq& \ll \lambda h \ln \l[\frac{\exp\l(\frac{\gamma \ll W_1 - W_2 \rl_\infty}{\lambda h}\r) \int_{U} \exp\l(\frac{r(x,u)h + \gamma\int  W_2(y)\rho_h(y|x,u) dy}{\lambda h}\r)du }{\int_{U} \exp\l(\frac{r(x,u) + \gamma\int  W_2(y) \rho_h(y|x,u) dy}{\lambda h}\r) du}\r]\rl \\
    =&  \ll \lambda h \ln \l[\exp\l(\frac{\gamma \ll W_1 - W_2 \rl_\infty}{\lambda h}\r) \r]   \rl = \gamma \ll W_1 - W_2 \rl_\infty  .
\end{aligned}
\]
\end{proof}

\bibliographystyle{abbrv}
\bibliography{bib}
	
\end{document}